\newtheorem{theorem}{Theorem}[section]
\newtheorem{proposition}[theorem]{Proposition}
\newtheorem{definition}[theorem]{Definition}
\newtheorem{claim}[theorem]{Claim}
\newtheorem{lemma}[theorem]{Lemma}
\newtheorem{corollary}[theorem]{Corollary}
\newtheorem{remark}[theorem]{Remark}
\newcommand{\qedsymb}{\hfill{\rule{2mm}{2mm}}}
\renewenvironment{proof}[1][]{\begin{trivlist}
\item[\hspace{\labelsep}{\bf\noindent Proof#1:\/}] }{\qedsymb\end{trivlist}}
\def\calE{{\cal E}}
\def\R{\mathbb{R}}
\def\C{\mathbb{C}}
\def\N{\mathbb{N}}
\newcommand\Prob[2]{{\Pr_{#1}\left[ {#2} \right]}}
\newcommand{\inrprd}[1]{\langle #1 \rangle}
\newcommand{\true}{\mathsf{True}}
\newcommand{\false}{\mathsf{False}}
\newcommand{\inver}{\mathsf{IN}}
\newcommand{\outver}{\mathsf{OUT}}
\newcommand{\NP}{\mathsf{NP}}
\newcommand{\ch}{\mathop{\mathrm{ch}}}
\newcommand{\chv}{\mathop{\mathrm{ch\text{-}s}}}
\renewcommand{\epsilon}{\varepsilon}
\newcommand{\linspan}{\mathop{\mathrm{span}}}
\newcommand{\Fset}{\mathbb{F}}         % The integers
\begin{document}

\title{{\bf On the Subspace Choosability in Graphs}\footnote{An extended abstract of this work appeared in Proc. of the European Conference on Combinatorics, Graph Theory and Applications (EuroComb), 2021~\cite{EurocombVersion}.}}
\author{
Dror Chawin\thanks{School of Computer Science, The Academic College of Tel Aviv-Yaffo, Tel Aviv 61083, Israel. Research supported by the Israel Science Foundation (grant No.~1218/20).}
\and
Ishay Haviv\footnotemark[2]
}

\date{}

\maketitle

\begin{abstract}
A graph $G$ is said to be $k$-subspace choosable over a field $\Fset$ if for every assignment of $k$-dimensional subspaces of some finite-dimensional vector space over $\Fset$ to the vertices of $G$, it is possible to choose for each vertex a nonzero vector from its subspace so that adjacent vertices receive orthogonal vectors over $\Fset$.
The {\em subspace choice number} of $G$ over $\Fset$ is the smallest integer $k$ for which $G$ is $k$-subspace choosable over $\Fset$.
This graph parameter, introduced by Haynes, Park, Schaeffer, Webster, and Mitchell (Electron. J. Comb.,~2010), is inspired by well-studied variants of the chromatic number of graphs, such as the (color) choice number and the orthogonality dimension.

We study the subspace choice number of graphs over various fields.
We first prove that the subspace choice number of every graph with average degree $d$ is at least $\Omega(\sqrt{d/\ln d})$ over any field.
We then focus on bipartite graphs and consider the problem of estimating, for a given integer $k$, the smallest integer $m$ for which the subspace choice number of the complete bipartite graph $K_{k,m}$ over a field $\Fset$ exceeds $k$.
We prove upper and lower bounds on this quantity as well as for several extensions of this problem.
Our results imply a substantial difference between the behavior of the choice number and that of the subspace choice number.
We also consider the computational aspect of the subspace choice number, and show that for every $k \geq 3$ it is $\NP$-hard to decide whether the subspace choice number of a given bipartite graph over $\Fset$ is at most $k$, provided that $\Fset$ is either the real field or any finite field.
\end{abstract}

\section{Introduction}

{\em Graph coloring} is the problem of minimizing the number of colors in a vertex coloring of a graph $G$ where adjacent vertices receive distinct colors.
This minimum is known as the {\em chromatic number} of $G$ and is denoted by $\chi(G)$.
Being one of the most popular topics in graph theory, the graph coloring problem was extended and generalized over the years in various ways.
One classical variant, initiated independently by Vizing in 1976~\cite{Vizing76} and by Erd\"{o}s, Rubin, and Taylor in 1979~\cite{ERT79}, is that of {\em choosability}, also known as {\em list coloring}, which deals with vertex colorings with some restrictions on the colors available to each vertex.
A graph $G=(V,E)$ is said to be {\em $k$-choosable} if for every assignment of a set $S_v$ of $k$ colors to each vertex $v \in V$, there exists a choice of colors $c_v \in S_v$ that form a proper coloring of $G$ (that is, $c_v \neq c_{v'}$ whenever $v$ and $v'$ are adjacent in $G$). The {\em choice number} of a graph $G$, denoted $\ch(G)$, is the smallest integer $k$ for which $G$ is $k$-choosable.
It is well known that the choice number $\ch(G)$ behaves quite differently from the standard chromatic number $\chi(G)$. In particular, it can be arbitrarily large even for bipartite graphs (see, e.g.,~\cite{ERT79}). The choice number of graphs enjoys an intensive study in graph theory involving combinatorial, algebraic, and probabilistic tools (see, e.g.,~\cite{AlonSurvey93}). The computational decision problem associated with the choice number is unlikely to be tractable, because it is known to be complete for the complexity class $\Pi_2$ of the second level of the polynomial-time hierarchy even for bipartite planar graphs~\cite{ERT79,Gutner96,GutnerT09}.

Another interesting variant of graph coloring, introduced by Lov{\'{a}}sz~\cite{Lovasz79} in the study of Shannon capacity of graphs, is that of {\em orthogonal representations}, where the vertices of the graph do not receive colors but vectors from some given vector space.
A $t$-dimensional orthogonal representation of a graph $G=(V,E)$ over $\R$ is an assignment of a nonzero vector $x_v \in \R^t$ to every vertex $v \in V$, such that $\langle x_v, x_{v'} \rangle = 0$ whenever $v$ and $v'$ are adjacent in $G$.\footnote{Orthogonal representations of graphs are sometimes defined in the literature as orthogonal representations of the complement, namely, the definition requires vectors associated with {\em non-adjacent} vertices to be orthogonal.}
The {\em orthogonality dimension} of a graph $G$ over $\R$ is the smallest integer $t$ for which there exists a $t$-dimensional orthogonal representation of $G$ over $\R$.
The orthogonality dimension parameter is closely related to several other well-studied graph parameters, and in particular, for every graph $G$ it is bounded from above by the chromatic number $\chi(G)$. The orthogonality dimension of graphs and its extensions to fields other than the reals have found a variety of applications in combinatorics, information theory, and theoretical computer science (see, e.g.,~\cite[Chapter~10]{LovaszBook} and~\cite{Haemers79}).
As for the computational aspect, the decision problem associated with the orthogonality dimension of graphs is known to be $\NP$-hard over every field~\cite{Peeters96} (see also~\cite{GolovnevH20}).

In 2010, Haynes, Park, Schaeffer, Webster, and Mitchell~\cite{HaynesPSWM10} introduced another variant of the chromatic number of graphs that captures both the choice number and the orthogonality dimension. In this setting, which we refer to as {\em subspace choosability}, each vertex of a graph $G$ is assigned a $k$-dimensional subspace of some finite-dimensional vector space, and the goal is to choose for each vertex a nonzero vector from its subspace so that adjacent vertices receive orthogonal vectors. The smallest integer $k$ for which such a choice is guaranteed to exist for all possible subspace assignments is called the {\em subspace choice number} of the graph $G$, formally defined as follows.

\begin{definition}
For a graph $G=(V,E)$ and a function $f:V \rightarrow \N$, $G$ is {\em $f$-subspace choosable} over a field $\Fset$ if for every integer $t$ and for every assignment of subspaces $W_v \subseteq \Fset^t$ with $\dim(W_v)=f(v)$ to the vertices $v \in V$ (which we refer to as an $f$-subspace assignment), there exists a choice of a nonzero vector $x_v \in W_v$ for each vertex $v \in V$, such that $\langle x_v , x_{v'} \rangle = 0$ whenever $v$ and $v'$ are adjacent in $G$. For an integer $k$, the graph $G$ is {\em $k$-subspace choosable} over $\Fset$ if it is $f$-subspace choosable over $\Fset$ for the constant function $f$ defined by $f(v)=k$.
The {\em subspace choice number} of $G$ over $\Fset$, denoted $\chv(G,\Fset)$, is the smallest $k$ for which $G$ is $k$-subspace choosable over $\Fset$.
\end{definition}
\noindent
Here and throughout the paper, we associate with the real field $\R$ and with every finite field $\Fset$ the inner product defined by $\langle x,y \rangle = \sum{x_i y_i}$, whereas for the complex field $\C$ we consider, as usual, the one defined by $\langle x,y \rangle = \sum{x_i \overline{y_i}}$.

The work~\cite{HaynesPSWM10} has initiated the study of the subspace choice number of graphs over the real and complex fields.
Among other things, it was shown there that a graph is $2$-subspace choosable over $\R$ if and only if it contains no cycles.
We note that this is in contrast to the characterization given in~\cite{ERT79} for the (chromatic) $2$-choosable graphs, which include additional graphs such as even cycles. This implies that the choice number and the subspace choice number do not coincide even on the $4$-cycle graph. Over the complex field $\C$, however, it was shown in~\cite{HaynesPSWM10} that a graph is $2$-subspace choosable if and only if it either contains no cycles or contains only one cycle and that cycle is even. This demonstrates the possible effect of the field on the subspace choice number.
It further follows from~\cite{HaynesPSWM10} that for every graph $G$ and every field $\Fset$, it holds that $\chv(G,\Fset) \leq \Delta(G)+1$ where $\Delta(G)$ stands for the maximum degree in $G$. In fact, a similar argument shows that $\Delta(G)$ can be replaced in this bound by the degeneracy of $G$ (i.e., the smallest integer $k$ for which every subgraph of $G$ contains a vertex of degree at most $k$).

\subsection{Our Contribution}

The current work studies the subspace choice number of graphs over various fields.
Our first result provides a lower bound on the subspace choice number of a general graph over any field in terms of its average degree.
\begin{theorem}\label{thm:degreeIntro}
There exists a constant $c>0$ such that for every graph $G$ with average degree $d>1$ and for every field $\Fset$,
\[\chv(G,\Fset) > c \cdot \sqrt{\frac{d}{\ln d}}.\]
\end{theorem}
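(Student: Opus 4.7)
\medskip\noindent\textbf{Proof plan.} The plan is to prove the theorem by a probabilistic construction of a $k$-dimensional subspace assignment admitting no valid vector choice, for $k$ proportional to $\sqrt{d/\ln d}$. First, a standard averaging argument produces a subgraph $H\subseteq G$ with minimum degree $\delta\geq d/2$. Since the vector choice number is monotone under taking subgraphs (any $k$-subspace assignment of $H$ extends to one of $G$ by assigning arbitrary subspaces to the missing vertices, and any $k$-vector choice for $G$ then restricts to one for $H$), it suffices to show $\chv(H,\Fset) > c\sqrt{\delta/\ln \delta}$ for some small constant $c>0$.

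Set $k=c\sqrt{\delta/\ln\delta}$ and fix an ambient dimension $t$ to be chosen large. For each vertex $v$ of $H$ independently, let $W_v\subseteq \Fset^t$ be the span of $k$ random vectors $u_v^1,\dots,u_v^k$, drawn uniformly in $\Fset^t$ over a finite field or as standard Gaussians over $\R$; with high probability these vectors are linearly independent and the $W_v$'s form a uniformly random $k$-subspace assignment. A valid vector choice corresponds to a tuple $\alpha=(\alpha_v)_v\in \prod_v(\Fset^k\setminus\{0\})$ such that $x_v=\sum_j\alpha_v^j u_v^j$ satisfies $\langle x_u,x_v\rangle=0$ for every edge of $H$. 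The key observation is that for any fixed all-nonzero $\alpha$, the induced vectors $x_v$ are jointly independent and marginally uniform (resp.\ Gaussian) in $\Fset^t$, since conditioning on all $u_v^j$ with $j$ different from some index where $\alpha_v^j\neq 0$ reduces $x_v$ to an affine function of one uniform random vector. The plan is then to upper bound the expected number of valid $\alpha$ and apply Markov's inequality.

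Over a finite field $\Fset_q$, conditioning successively on the vector at the shared endpoint of each edge yields that the joint edge probability is at most $q^{-|E(H)|}$ up to lower-order corrections, giving expected count $\lesssim q^{k|V(H)|-|E(H)|}$. Over $\R$ the exact orthogonality event has measure zero, so one replaces each edge constraint by the approximate event $|\langle x_u,x_v\rangle|\leq 2\epsilon$ and discretizes each $W_v\cap S^{t-1}$ by an $\epsilon$-net of size $(3/\epsilon)^{k-1}$; the Gaussian small-ball estimate $\Pr[|\langle y_u,y_v\rangle|\leq 2\epsilon]\lesssim \epsilon\sqrt{t}$ for independent uniform unit vectors controls the probability that a fixed net configuration is approximately valid, and any exact valid choice projects to such a net configuration. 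The main obstacle is carrying out this real-case analysis quantitatively and uniformly across fields: the parameters $\epsilon$ and $t$ must be balanced so that the small-ball approximation is sharp and so that an exact valid choice is approximated within $2\epsilon$ by its closest net point. The optimal choice of $\epsilon$ as a suitable power of $1/t$, together with the polynomial dependence of $t$ on $|V(H)|$, injects a logarithmic loss into the final exponent that is responsible for the $\sqrt{\ln d}$ in the denominator of the stated bound. After optimizing this trade-off, the expected count drops strictly below $1$ precisely when $k\leq c\sqrt{\delta/\ln\delta}$, yielding by Markov a realization of the random assignment admitting no valid vector choice and thus $\chv(H,\Fset)>k$.
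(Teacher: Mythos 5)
Your approach is genuinely different from the paper's, and it is worth comparing. The paper reduces the problem to a deterministic combinatorial notion: a graph is \emph{$k$-partitioned} if one can split each vertex's incident edges into $k$ classes so that every ``class-selection'' function $g:V\to[k]$ hits a monochromatic conflict on some edge. Given such a partition, assigning to $v$ the span of the $k$ indicator vectors of $E_v^{(1)},\dots,E_v^{(k)}$ inside $\Fset^{|E|}$ produces a $k$-subspace assignment with no valid choice, because the supports of two adjacent vertices' subspaces meet only at the shared edge. The $\sqrt{d/\ln d}$ then comes from a first-moment argument over the $k^{|V|}$ functions $g$, with the union bound producing the $\ln k$ loss. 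Your proposal instead takes the subspaces themselves to be random (spans of i.i.d.\ uniform or Gaussian vectors), uses the key observation that for a fixed nonzero coefficient tuple $\alpha$ the induced vectors $x_v$ are independent and marginally uniform, and applies a first moment to the (net-discretized, over $\R$) space of possible choices.

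The main issue is that the plan is not actually carried out, and the part you do sketch does not visibly produce the bound you claim. Over a finite field the calculation you describe gives an expected number of valid coefficient tuples of order $q^{k|V(H)|-|E(H)|}$, which is below $1$ once $k<|E(H)|/|V(H)|$, i.e., once $k$ is a small constant times $\delta$ --- there is no square root and no logarithm in sight. Similarly, over $\R$ the $\epsilon$-net count you set up is roughly $(C/\epsilon)^{(k-1)|V(H)|}(\epsilon\sqrt{t})^{|E(H)|}$, and since the ambient dimension $t$ may be chosen arbitrarily large and then $\epsilon$ arbitrarily small \emph{as a function of $t$} (the definition of $\chv$ imposes no bound on $t$), nothing in your sketch forces the ``polynomial dependence of $t$ on $|V(H)|$'' or ``$\epsilon$ a power of $1/t$'' that you invoke to manufacture the logarithmic loss. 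You assert that ``the expected count drops strictly below $1$ precisely when $k\le c\sqrt{\delta/\ln\delta}$,'' but you never derive this threshold, and the exponent bookkeeping you do show points to a linear-in-$\delta$ threshold instead; the two cannot both be the outcome of the same computation. This means the proposal, as written, has a genuine gap: either there is a loss in the conditioning/degeneracy analysis that you have not identified and not controlled (you only wave at ``lower-order corrections'' for the dependence between edge events sharing a vertex and for possible rank deficiencies among the conditioned vectors, which over finite fields is delicate because of isotropic vectors), or the argument would in fact prove something stronger than the theorem and the $\sqrt{\ln d}$ you attribute to the $\epsilon$-net is spurious. Until the joint edge probability $\approx q^{-|E(H)|}$ (resp.\ its real analogue) is established with explicit error control uniform in $\alpha$, and the final inequality is actually solved for $k$, the proof is not complete.
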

\noindent
The proof of Theorem~\ref{thm:degreeIntro} is based on a probabilistic argument.
For certain graphs, we provide an improved lower bound on the subspace choice number, avoiding the logarithmic term (see Theorem~\ref{thm:projective}).
This improvement relies on an explicit construction of finite projective planes.

We note that a result of Saxton and Thomason~\cite{SaxtonT15}, improving on a result of Alon~\cite{AlonDegrees}, asserts that for every graph $G$ with average degree $d$, it holds that $\ch(G) \geq (1+o(1)) \cdot \log_2 d$, where the $o(1)$ term tends to $0$ when $d$ tends to infinity.
Erd\"{o}s et al.~\cite{ERT79} proved that the choice number of the complete bipartite graph $K_{n,n}$ satisfies $\ch(K_{n,n}) = (1+o(1)) \cdot \log_2 n$, hence the lower bound of~\cite{SaxtonT15} is tight on these graphs.
Theorem~\ref{thm:degreeIntro} thus shows a substantial difference between the behavior of the subspace choice number and that of the standard choice number in terms of the average degree.

For the complete graph $K_n$, it is easy to see that $\chv(K_n,\Fset) = n$ whenever $\Fset$ is a field over which no nonzero vector is self-orthogonal, such as $\R$ and $\C$.
For finite fields, however, we show that the subspace choice number of $K_n$ is strictly smaller than $n$ for every sufficiently large $n$. This in particular shows that the subspace choice number over finite fields can be smaller than the choice number.

\begin{theorem}\label{thm:n-sqrt{n}}
There exists a constant $c>0$ such that for every sufficiently large integer $n$ and for every finite field $\Fset$,
\[\chv(K_n,\Fset) \leq n - c \cdot \sqrt{n}.\]
\end{theorem}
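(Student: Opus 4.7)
The plan is to exploit the existence of large totally isotropic subspaces of $\Fset^t$ when $\Fset$ is a finite field, a feature absent over $\R$ where no nonzero vector is self-orthogonal. Such subspaces provide ``free'' pairwise orthogonality: any two nonzero vectors in a totally isotropic subspace are automatically orthogonal.

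Given a $k$-subspace assignment $(W_v)_{v \in V(K_n)}$ in $\Fset^t$ with $k = n - c\sqrt{n}$, I would combine isotropic selection with greedy orthogonalization. Fix a totally isotropic subspace $U \subseteq \Fset^t$ of dimension $s = \Theta(\sqrt{n})$; such $U$ exists because over a finite field, every nondegenerate symmetric bilinear form on $\Fset^m$ has Witt index close to $m/2$. Let $T = \{v : W_v \cap U \neq \{0\}\}$. For $v \in T$, pick any nonzero $x_v \in W_v \cap U$; these choices are pairwise orthogonal by the isotropy of $U$. For $v \notin T$, pick $x_v \in W_v \cap U^\perp$ greedily, requiring orthogonality to all previously chosen vectors in this second phase. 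Since $\dim(W_v \cap U^\perp) \geq k - s$, the greedy procedure succeeds as long as $|V \setminus T| \leq k - s$, i.e., as long as $U$ meets all but $k - s = n - c\sqrt{n} - s$ of the $W_v$'s nontrivially.

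In the ``small ambient'' regime $t \leq 2k - 1$, this condition comes for free. Indeed, taking $s = t - k + 1$, which is at most $\lfloor t/2 \rfloor$ under this regime and hence achievable as the dimension of a totally isotropic subspace, gives $\dim(W_v \cap U) \geq k + s - t \geq 1$ for every $v$ by the dimension formula for intersections, so $T = V(K_n)$ and the greedy phase is vacuous.

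The main obstacle is the ``large ambient'' regime $t \geq 2k$, where an $s$-dimensional totally isotropic subspace with $s = \Theta(\sqrt{n})$ can be disjoint from almost all of the $W_v$'s when they sit in general position in $\Fset^t$. Overcoming this requires either a reduction bounding the effective ambient dimension, for instance by quotienting out the radical of the bilinear form restricted to $\sum_v W_v$ and iterating, or a probabilistic construction of $U$ combined with quantitative counts of totally isotropic subspaces in finite classical geometries to ensure that enough of the $W_v$'s are covered.
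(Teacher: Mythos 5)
Your high-level instinct---that over a finite field one can exploit isotropy to ``pay less'' for orthogonality than over $\R$---is exactly the right one, and it is also the engine behind the paper's proof via Chevalley's theorem. But the way you deploy it leaves a genuine gap. You try to fix one global totally isotropic subspace $U$ of dimension $\Theta(\sqrt{n})$ and route every vertex either through $U$ or through $U^\perp$. As you concede, when the ambient dimension $t$ is large relative to $k$, there is no guarantee that $U$ meets any of the $W_v$ nontrivially, and the two remedies you sketch (quotienting by the radical of the form restricted to $\sum_v W_v$, or averaging over isotropic subspaces) are not carried out. Quotienting by the radical only delivers a nondegenerate form on a space of dimension up to $nk$, not a space of dimension $O(k)$, so it does not by itself land you in your ``small ambient'' regime; and nothing in a probabilistic choice of $U$ forces $U$ to hit subspaces that a clever adversary has placed in general position. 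Since the assignment of subspaces is adversarial and the ambient dimension $t$ is part of the adversary's choice, the large-$t$ case is not a corner case---it is the whole problem, and it is not resolved.

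There are also smaller issues in the regime you do handle. You take $s=t-k+1$ and claim $s\leq\lfloor t/2\rfloor$ when $t\leq 2k-1$, but at $t=2k-1$ one has $s=k>\lfloor t/2\rfloor=k-1$. More importantly, the Witt index of the standard bilinear form $\sum x_iy_i$ on $\Fset^t$ need not equal $\lfloor t/2\rfloor$: it is $(t-1)/2$ for odd $t$, and for even $t$ it can be $t/2-1$ when the form is of minus type (already $t=2$ over $\Fset_3$ has Witt index $0$). These are patchable with worse constants, but they show the ``for free'' claim needs more care even where your argument is supposed to apply cleanly.

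For contrast, the paper sidesteps all of this by never fixing a single global isotropic subspace. It uses Chevalley's theorem locally (Lemma~\ref{lemma:ortho_vec}): given any two triples of vectors, one finds a nontrivial common linear combination making the two resulting vectors orthogonal. This feeds into Lemma~\ref{lemma:choice_3sub}, which, under mild dimension hypotheses on three subspaces $U_1,U_2,U_3$, produces orthogonal nonzero vectors $x_1\in U_1$, $x_2\in U_2$ whose joint orthogonality constraint on $U_3$ costs only one dimension rather than two. The theorem then runs a careful bookkeeping process over a partition of the $n$ vertices into blocks so that a reserved set of $k\approx\sqrt{n}$ vertices each absorbs roughly $k$ such one-for-two savings, which is exactly what frees up $\Omega(\sqrt{n})$ dimensions. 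This local, iterative use of isotropy is what lets the argument work for every ambient dimension $t$ uniformly, with no case split on $t$ and no reliance on Witt-index estimates for the standard form. To rescue your approach you would need a genuinely new idea for the large-$t$ regime; as written, the proposal does not prove the theorem.
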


We next put our focus on complete bipartite graphs.
For the color choosability problem, it was observed in~\cite{ERT79} that the graph $K_{k,m}$ is $k$-choosable for every $m < k^k$ whereas $\ch(K_{k,m})=k+1$ for every $m \geq k^k$.
Considering the subspace choice number of these graphs, for every field $\Fset$ it holds that  $\chv(K_{k,m},\Fset) \leq k+1$, because $K_{k,m}$ is $k$-degenerate.
We consider here the problem of identifying the values of $m$ for which this $k+1$ upper bound is tight.
Namely, for an integer $k$ and a field $\Fset$, let $m(k,\Fset)$ denote the smallest integer $m$ for which it holds that $\chv(K_{k,m},\Fset) = k+1$.
We provide the following lower bound.
\begin{theorem}\label{thm:Lower}
For every integer $k$ and for every field $\Fset$, \[ m(k,\Fset) > \sum_{i=1}^{k-1}{ \Big \lfloor \frac{k-1}{i} \Big \rfloor}.\]
In particular, for every field $\Fset$ it holds that $m(k,\Fset) = \Omega(k \cdot \log k)$.
\end{theorem}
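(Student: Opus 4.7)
The plan is to show that whenever $m \le \phi(k) := \sum_{i=1}^{k-1} \lfloor (k-1)/i \rfloor$, the graph $K_{k,m}$ is $k$-vector choosable over $\Fset$, so that $m(k,\Fset) > \phi(k)$. Fix such an $m$, write $A = \{a_1,\ldots,a_k\}$ and $B$ for the two sides, and let $k$-dimensional subspaces $W_v \subseteq \Fset^t$ be given. I would construct $x_{a_j} \in W_{a_j} \setminus \{0\}$ sequentially for $j=1,\ldots,k$, and for each $b \in B$ track the subspace $W_b^{(j)} := W_b \cap x_{a_1}^\perp \cap \cdots \cap x_{a_j}^\perp$. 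The goal is to force $\dim W_b^{(k)} \ge 1$ for every $b$, after which one picks any nonzero $x_b \in W_b^{(k)}$ to complete the assignment.

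The scheduling idea leverages the reindexing $\phi(k) = \sum_{\ell=1}^{k} \lfloor (k-1)/\ell \rfloor$ (the $\ell = k$ summand vanishes). Partition $B = B_1 \cup \cdots \cup B_k$ with $|B_j| \le \lfloor (k-1)/(k-j+1) \rfloor$, and call a vertex $b$ \emph{processed at stage $i$} whenever $x_{a_i} \perp W_b^{(i-1)}$. In that situation $W_b^{(i)} = W_b^{(i-1)}$, and since $\dim W_b^{(i-1)} \ge k-i+1$ (each prior stage drops the dimension by at most one), the remaining $k-i$ stages can cut at most $k-i$ more dimensions, guaranteeing $\dim W_b^{(k)} \ge 1$. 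Thus the task reduces to ensuring every $b$ is processed at some stage.

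At stage $j$, let $B_j^\sharp \subseteq B_j$ be the vertices not processed before stage $j$, and pick $x_{a_j}$ to be any nonzero vector of
\[
U_j := W_{a_j} \cap \Big(\sum_{b \in B_j^\sharp} W_b^{(j-1)}\Big)^\perp.
\]
For $b \in B_j^\sharp$, every previous $x_{a_i}$ failed to be orthogonal to $W_b^{(i-1)}$, so the dimension dropped by exactly one at each prior stage, giving $\dim W_b^{(j-1)} = k-j+1$. Consequently the sum inside the perp has dimension at most $|B_j^\sharp|(k-j+1) \le \lfloor (k-1)/(k-j+1) \rfloor \cdot (k-j+1) \le k-1$, and the standard dimension inequality (together with the non-degeneracy of the inner product on $\Fset^t$, which holds in all cases considered in the paper) yields $\dim U_j \ge k-(k-1) = 1$. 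Any nonzero $x_{a_j} \in U_j$ processes every $b \in B_j^\sharp$ at stage $j$, while any $b \in B_j \setminus B_j^\sharp$ was already processed earlier.

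The subtlety I would worry about most is \emph{field genericity}. Greedy constructions of this flavor often stumble over finite fields because at each step one wants to satisfy equalities and simultaneously avoid finitely many bad proper subspaces, which can fail when the field is very small. The key feature of the present scheme is that it never asks for anything beyond a nonzero vector in $U_j$; no bad subspaces need to be avoided, because it is harmless if extra $b$'s happen to be processed early (this can only shrink later $B_j^\sharp$'s and loosen the dimension budget). The argument therefore runs uniformly over $\R$, $\C$ with the Hermitian pairing, and every finite field. The remaining bookkeeping is the reindexing identity $\sum_{j=1}^{k} \lfloor (k-1)/(k-j+1)\rfloor = \sum_{i=1}^{k-1} \lfloor (k-1)/i \rfloor$ via $i=k-j+1$, matching the bound stated in the theorem; the asymptotic $\Omega(k \log k)$ then follows from $\sum_{i=1}^{k-1} \lfloor (k-1)/i \rfloor = (k-1) H_{k-1} - O(k)$.
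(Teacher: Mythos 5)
Your proposal is correct and takes essentially the same approach as the paper's proof of Theorem~\ref{thm:LowerAs} (specialized to $n=k$): a greedy construction of the left-side vectors, with a budget of $\lfloor (k-1)/(k-j+1)\rfloor$ right vertices scheduled to be ``handled'' at stage $j$, and the same dimension accounting showing each right vertex is processed at some stage. The only cosmetic difference is that you track the subspaces $W_b^{(j)} = W_b \cap x_{a_1}^\perp \cap \cdots \cap x_{a_j}^\perp$ directly, whereas the paper tracks the dual subspaces $L_j = V_j^\perp + \linspan(x_1,\ldots,x_i)$ and packages the dimension count into a separate intersection lemma; your $B_j^\sharp$ is exactly the paper's $J''$, and ``processed'' corresponds to the paper's ``a chosen vector already lies in $L_j$.''
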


We next provide a general approach for proving upper bounds on $m(k,\Fset)$.
The following theorem reduces this challenge to constructing families of vectors with certain linear independence constraints.
\begin{theorem}\label{thm:GeneralUpper}
If there exists a collection of $m = k \cdot (t-1) +1$ nonzero vectors in $\Fset^k$ satisfying that every $t$ of them span the entire space $\Fset^{k}$, then $m(k,\Fset) \leq m$.
\end{theorem}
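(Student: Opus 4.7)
The plan is to exhibit, for $m = k(t-1)+1$, an explicit $k$-subspace assignment for $K_{k,m}$ over $\Fset$ that admits no valid choice of nonzero vectors. Combined with the degeneracy-based bound $\chv(K_{k,m},\Fset) \leq k+1$ noted in the introduction, this will yield $\chv(K_{k,m},\Fset) = k+1$, hence $m(k,\Fset) \leq m$. Let $v_1,\ldots,v_m \in \Fset^k$ be the vectors supplied by the hypothesis. I will use the equivalent reformulation that the hypothesis ``every $t$ of the $v_j$ span $\Fset^k$'' is the same as saying that no hyperplane of $\Fset^k$ contains $t$ of them; that dual phrasing is the engine of the argument.

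For the construction, I take the ambient space $\Fset^{k^2}$ and view it as $V_1 \oplus \cdots \oplus V_k$ with each $V_i \cong \Fset^k$. Denote the two sides of $K_{k,m}$ by $a_1,\ldots,a_k$ and $b_1,\ldots,b_m$. Assign $W_{a_i} := V_i$, a $k$-dimensional subspace of $\Fset^{k^2}$. For each $j$, let $W_{b_j}$ be the image of the injection $\Fset^k \to \Fset^{k^2}$ given by $(\lambda_1,\ldots,\lambda_k) \mapsto (\lambda_1 v_j, \ldots, \lambda_k v_j)$; injectivity uses $v_j \neq 0$, so $\dim W_{b_j} = k$. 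Next I will unpack the orthogonality constraint. A nonzero adversarial choice $x_{a_i} \in W_{a_i}$ corresponds to a nonzero $\alpha^{(i)} \in \Fset^k$, and expanding the standard inner product on $\Fset^{k^2}$ shows that $(\lambda_1 v_j,\ldots,\lambda_k v_j) \in W_{b_j}$ is orthogonal to $x_{a_i}$ exactly when $\lambda_i \langle \alpha^{(i)}, v_j\rangle = 0$ (an analogous computation with complex conjugation handles $\Fset = \C$). Consequently, $W_{b_j}$ contains a nonzero vector orthogonal to all of $x_{a_1},\ldots,x_{a_k}$ if and only if some $i$ satisfies $\langle \alpha^{(i)}, v_j\rangle = 0$, i.e., $v_j$ lies in the hyperplane $H_i := \{v \in \Fset^k : \langle \alpha^{(i)}, v\rangle = 0\}$.

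Vertex $b_j$ therefore fails precisely when $v_j \notin H_1 \cup \cdots \cup H_k$. To finish, the reformulated hypothesis says that each $H_i$ contains at most $t-1$ of the vectors $v_j$, so the union contains at most $k(t-1) < m$ of them; hence at least one $v_j$ avoids the union, producing at least one $b_j$ with no valid nonzero choice, as desired. The step I expect to be the main conceptual one is the choice of the ``diagonal'' subspaces $W_{b_j}$, which is exactly what reduces each $b_j$-constraint to the single question of whether $v_j$ lies in the union $H_1 \cup \cdots \cup H_k$; once that reduction is in place, the pigeonhole on $m = k(t-1)+1$ closes the argument in a line.
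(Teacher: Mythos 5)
Your proposal is correct and takes essentially the same approach as the paper. The construction is identical to the one used in Theorem~\ref{thm:GeneralUpper_as}: the ambient space $\Fset^{kn}$ is split into $k$ blocks, the left vertices are assigned the block subspaces, and each right vertex $b_j$ is assigned the ``diagonal'' subspace spanned by $e_1 \otimes b_j, \ldots, e_k \otimes b_j$, which is exactly your $\{(\lambda_1 v_j,\ldots,\lambda_k v_j)\}$. The finishing step is packaged slightly differently: the paper supposes a valid choice for the whole graph, writes each $y_j$ in coordinates $\alpha_{i,j}$, and double-counts nonzero coefficients ($\geq m$ because all $y_j\neq 0$, $\leq k(t-1)$ because each hyperplane catches at most $t-1$ of the $b_j$); you instead observe directly that $b_j$ has a valid nonzero choice iff $v_j$ lies in one of the $k$ hyperplanes $H_i$, so the union (size $\leq k(t-1) < m$) must miss some $v_j$. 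These are two phrasings of the same pigeonhole argument, with yours arguably a touch more direct.
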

\noindent
The above theorem allows us to derive upper bounds on $m(k,\Fset)$ for various fields $\Fset$.
\begin{corollary}\label{cor:Intro_m}
Let $k$ be an integer and let $\Fset$ be a field.
\begin{enumerate}
  \item\label{itm_cor:1} If $|\Fset| \geq k^2-k+1$ then $m(k,\Fset) \leq k^2-k+1$.
  \item\label{itm_cor:2} If $\Fset$ is a finite field of size $q \geq k$ then $m(k,\Fset) \leq k \cdot \frac{q^{k-1}-1}{q-1}+1$.
\end{enumerate}
\end{corollary}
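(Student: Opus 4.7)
The plan is to derive each item by constructing an explicit family of vectors that satisfies the hypothesis of Theorem~\ref{thm:GeneralUpper} for a suitable choice of the parameter $t$. In both cases, the translation of the hypothesis that is most useful is the following reformulation: for vectors in $\Fset^k$, the condition ``every $t$ of them span $\Fset^k$'' is equivalent to ``no hyperplane of $\Fset^k$ contains more than $t-1$ of them,'' since a set of vectors fails to span $\Fset^k$ if and only if it is contained in some hyperplane.

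For Item~\ref{itm_cor:1}, I would take $t=k$, so that $m=k(t-1)+1=k^2-k+1$ and the requirement of Theorem~\ref{thm:GeneralUpper} becomes producing $k^2-k+1$ nonzero vectors in $\Fset^k$, every $k$ of which are linearly independent. The natural candidate is the Vandermonde family: pick $k^2-k+1$ distinct scalars $\alpha_1,\ldots,\alpha_m\in\Fset$, which is possible precisely because $|\Fset|\ge k^2-k+1$, and set $v_i=(1,\alpha_i,\alpha_i^2,\ldots,\alpha_i^{k-1})$. Any $k$ of these vectors form a $k\times k$ Vandermonde matrix with nonzero determinant, so they are linearly independent, and Theorem~\ref{thm:GeneralUpper} delivers the bound.

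For Item~\ref{itm_cor:2}, I would choose $t$ so that $t-1=\frac{q^{k-1}-1}{q-1}$, which is exactly the number of one-dimensional subspaces contained in a hyperplane of $\Fset^k$. With this $t$, the target length in Theorem~\ref{thm:GeneralUpper} becomes $m=k(t-1)+1=k\cdot\frac{q^{k-1}-1}{q-1}+1$, matching the corollary. The construction is to take one nonzero representative from each one-dimensional subspace of $\Fset^k$; there are $\frac{q^k-1}{q-1}$ such representatives, and every hyperplane contains exactly $\frac{q^{k-1}-1}{q-1}=t-1$ of them, so any sub-collection of representatives automatically satisfies that no hyperplane contains $t$ of them. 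It then remains to verify that there are at least $m$ representatives available, i.e., that $\frac{q^k-1}{q-1}\ge k\cdot\frac{q^{k-1}-1}{q-1}+1$, which after clearing denominators reduces to the inequality $q^{k-1}(q-k)\ge q-k$; this is immediate from the assumption $q\ge k$.

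The only step that requires any real calculation is the last inequality above, and it is elementary. The main conceptual obstacle is identifying the correct $t$ in Item~\ref{itm_cor:2}: once one recognizes that $t-1$ should equal the hyperplane-count $\frac{q^{k-1}-1}{q-1}$, choosing arbitrary representatives of the one-dimensional subspaces is forced and the hypothesis of Theorem~\ref{thm:GeneralUpper} holds automatically rather than needing a further combinatorial argument.
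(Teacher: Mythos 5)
Your proposal is correct and follows essentially the same route as the paper: Item~\ref{itm_cor:1} is obtained by feeding a Vandermonde family into Theorem~\ref{thm:GeneralUpper} with $t=k$, and Item~\ref{itm_cor:2} by taking one representative of each one-dimensional subspace of $\Fset^k$ with $t-1=\frac{q^{k-1}-1}{q-1}$, the paper phrasing the spanning condition via strict subspaces rather than hyperplanes but arguing identically. Your explicit verification that $\frac{q^k-1}{q-1}\ge k\cdot\frac{q^{k-1}-1}{q-1}+1$ reduces to $q^{k-1}(q-k)\ge q-k$ is just an unpacking of the inequality the paper states without proof.
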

\noindent
We remark that the first item of Corollary~\ref{cor:Intro_m} is obtained by applying Theorem~\ref{thm:GeneralUpper} with collections of vectors that form the columns of Vandermonde matrices.
It implies that $m(k,\Fset) = O(k^2)$ whenever the field $\Fset$ is infinite or sufficiently large as a function of $k$, leaving us with a nearly quadratic gap from the lower bound given in Theorem~\ref{thm:Lower}.
This again demonstrates a significant difference between the behavior of the choice number and that of the subspace choice number.

In fact, Theorems~\ref{thm:Lower} and~\ref{thm:GeneralUpper} are proved in a more general form with respect to {\em asymmetric} subspace assignments, where the left and right vertices of the complete bipartite graphs might be assigned subspaces of different dimensions.
For the precise generalized statements, see Theorems~\ref{thm:LowerAs} and~\ref{thm:GeneralUpper_as}.
We note that this is analogous to the asymmetric setting of color choosability that was recently studied by Alon, Cambie, and Kang~\cite{AlonAsymm21}.

We particularly consider the bipartite graph $K_{2,m}$ whose left side consists of only two vertices.
For an integer $n$, we say that $K_{2,m}$ is $(n;2,2)$-subspace choosable over a field $\Fset$ if it is $f$-subspace choosable over $\Fset$ for the function $f$ that assigns the integer $n$ to one vertex of the left side and the integer $2$ to each of the other vertices.
We consider the problem of determining, for a given integer $n$, the smallest $m$ for which $K_{2,m}$ is $(n;2,2)$-subspace choosable over a given field $\Fset$, and prove the following.

\begin{theorem}\label{thm:Intro_K_2,m}
For every integer $n \geq 1$ the following holds.
\begin{enumerate}
  \item\label{itm:K_2,n_1} The graph $K_{2,n-1}$ is $(n;2,2)$-subspace choosable over every field $\Fset$.
  \item\label{itm:K_2,n_2} The graph $K_{2,n}$ is $(n;2,2)$-subspace choosable over $\C$.
  \item\label{itm:K_2,n_3} The graph $K_{2,n}$ is $(n;2,2)$-subspace choosable over $\R$ if and only if $n$ is odd.
\end{enumerate}
\end{theorem}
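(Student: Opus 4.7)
The plan is to rephrase the problem as one about bilinear forms. Let $U_1$ (of dimension $n$) and $U_2$ (of dimension $2$) be the subspaces assigned to the two left vertices, and for each right vertex $v_j$ fix a basis $w_1^{(j)}, w_2^{(j)}$ of the assigned $2$-dimensional subspace $W_j$. For $x_1 \in U_1$ and $x_2 \in U_2$, a nonzero $y_j \in W_j$ orthogonal to both $x_1$ and $x_2$ exists if and only if the $2 \times 2$ linear system for the coefficients of $y_j$ in this basis is singular; equivalently, the bilinear form
\[ B_j(x_1,x_2) := \langle w_1^{(j)}, x_1\rangle \langle w_2^{(j)}, x_2\rangle - \langle w_2^{(j)}, x_1\rangle \langle w_1^{(j)}, x_2\rangle \]
vanishes. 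Writing $B_j(x_1,x_2) = x_1^T C_j x_2$ for some $n \times 2$ matrix $C_j$, the task becomes to find nonzero $x_1 \in U_1$ and $x_2 \in U_2$ such that the $m$ vectors $C_1 x_2, \ldots, C_m x_2 \in U_1$ fail to span $U_1$, so that a nonzero $x_1$ can be taken orthogonal to them.

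For Item~\ref{itm:K_2,n_1} with $m = n-1$, any nonzero $x_2$ yields at most $n-1$ vectors $C_j x_2$, which cannot span the $n$-dimensional space $U_1$, so a nonzero $x_1$ is available over every field. For Items~\ref{itm:K_2,n_2} and~\ref{itm:K_2,n_3} with $m = n$, introduce the polynomial
\[ D(x_2) := \det\!\bigl[\, C_1 x_2 \mid \cdots \mid C_n x_2 \,\bigr], \]
a homogeneous form of degree $n$ in the two coordinates of $x_2$ with respect to a chosen basis of $U_2$. If $D$ is identically zero, any nonzero $x_2$ works; otherwise $D$ is a nonzero binary form of degree $n$, which has a nontrivial root over $\C$ by the fundamental theorem of algebra, and over $\R$ with $n$ odd by dehomogenizing to an odd-degree univariate real polynomial. (Over $\C$, the Hermitian inner product is conjugate-linear in one argument; but $D$ may be viewed as a polynomial in the complex conjugates of the coordinates of $x_2$, preserving both the degree and the argument.)

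For the ``only if'' direction of Item~\ref{itm:K_2,n_3} when $n$ is even, the plan is to construct a subspace assignment over $\R^{n+2}$ whose polynomial $D$ has no nontrivial real root. Take $U_1 = \R^n \times \{0\}$ and $U_2 = \{0\} \times \R^2$, and for each $\ell \in \{1, \ldots, n/2\}$ define
\[ W_{2\ell-1} = \linspan\{(e_{2\ell-1}, e_1),\, (e_{2\ell}, e_2)\} \quad \text{and} \quad W_{2\ell} = \linspan\{(e_{2\ell-1}, e_2),\, (e_{2\ell}, -e_1)\}, \]
where each pair denotes a vector in $\R^n \oplus \R^2$. A direct computation yields $B_{2\ell-1}(x_1, x_2) = (x_1)_{2\ell-1}(x_2)_2 - (x_1)_{2\ell}(x_2)_1$ and $B_{2\ell}(x_1, x_2) = -(x_1)_{2\ell-1}(x_2)_1 - (x_1)_{2\ell}(x_2)_2$, so the matrix $[C_1 x_2 \mid \cdots \mid C_n x_2]$ is block-diagonal with $n/2$ blocks of determinant $-\bigl((x_2)_1^2 + (x_2)_2^2\bigr)$. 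Hence $D(x_2) = (-1)^{n/2}\bigl((x_2)_1^2 + (x_2)_2^2\bigr)^{n/2}$, which vanishes only at the origin over $\R$, so no valid coloring exists. The main obstacle is this construction: one must engineer the $W_j$'s so that the joint determinant $D$ becomes a power of the positive-definite form $(x_2)_1^2 + (x_2)_2^2$, essentially by pairing the $W_j$'s into ``complex-rotation'' blocks. The positive parts of the theorem reduce to dimension counting combined with root-finding for binary forms.
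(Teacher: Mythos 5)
Your proof is correct in its essentials and takes a genuinely different, more unified route than the paper. You recast all three items through a single determinant polynomial $D(x_2) = \det[C_1 x_2 \mid \cdots \mid C_m x_2]$, and then Item~\ref{itm:K_2,n_1} becomes a dimension count, Items~\ref{itm:K_2,n_2} and the ``if'' part of~\ref{itm:K_2,n_3} become root-finding for a degree-$n$ binary form, and the ``only if'' part of~\ref{itm:K_2,n_3} becomes an explicit construction making $D$ a power of the positive-definite form $(x_2)_1^2 + (x_2)_2^2$. The paper instead proves Item~\ref{itm:K_2,n_1} by a direct greedy choice; proves Items~\ref{itm:K_2,n_2} and ~\ref{itm:K_2,n_3} (``if'') by a case split on whether some vector of the $2$-subspace annihilates a $V_j$, and in the main case invokes a lemma from Haynes et al.\ to obtain special orthogonal bases before reducing to singularity of $\alpha M'_1 - \beta M'_2$, i.e., to a real/complex eigenvalue of an $n\times n$ matrix; and proves the ``only if'' part by tensoring up a bad $2$-subspace assignment for $K_{2,2}$ (which it has already established in proving Proposition~\ref{prop:chrachterization}). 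Both arguments for the positive cases ultimately rest on a degree-$n$ polynomial having a root, but your formulation is more self-contained: it avoids the external lemma, treats all three items uniformly, and your explicit ``complex-rotation'' block construction for the negative case is independent of the earlier $K_{2,2}$ result. A small point to tighten: when $n$ is odd and $D\not\equiv 0$, dehomogenizing $D$ by setting $(x_2)_2 = 1$ may give a polynomial of degree strictly less than $n$ (possibly even), so you should add that if the leading coefficient $D(1,0)$ vanishes then $(1,0)$ is itself a nontrivial root; alternatively, argue via $D(-\eta) = -D(\eta)$ and the intermediate value theorem on the unit circle. Also, over $\C$ your form $B_j$ is conjugate-linear in each argument when expressed in coordinates, so, as you note, the substitution should be made in the conjugate coordinates of both $x_1$ and $x_2$, not just of $x_2$.
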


We finally consider the computational aspect of the subspace choice number and prove the following hardness result.

\begin{theorem}\label{thm:hardness}
Let $k \geq 3$ be an integer and let $\Fset$ be either $\R$ or some finite field.
Then, the problem of deciding whether a given bipartite graph $G$ satisfies $\chv(G,\Fset) \leq k$ is $\NP$-hard.
\end{theorem}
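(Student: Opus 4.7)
The plan is to prove NP-hardness by polynomial-time reduction from the decision problem for the orthogonality dimension of general graphs, which is known to be NP-hard over $\R$ and over every finite field for every $k \geq 3$ (see Peeters~\cite{Peeters96} and~\cite{GolovnevH20} as cited in the introduction). Given a general graph $H$ supplied as input to the orthogonality dimension problem, we construct in polynomial time a bipartite graph $G$ such that $\chv(G,\Fset) \leq k$ if and only if the orthogonality dimension of $H$ over $\Fset$ is at most $k$.

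First I would design the construction. Start by placing a ``copy'' vertex $\widehat{v}$ for each $v \in V(H)$ on, say, the left side of $G$. For every edge $\{u,v\} \in E(H)$, since $\widehat{u}$ and $\widehat{v}$ lie on the same side and cannot be directly adjacent, introduce a right-side gadget that forces, under a suitable adversarial subspace assignment, the relation $\langle x_{\widehat{u}}, x_{\widehat{v}} \rangle = 0$. The most natural gadget is a short subdivision of the edge together with one or two auxiliary right-side vertices whose subspaces the adversary picks to be hyperplanes or carefully chosen low-dimensional subspaces (e.g.\ containing $\{x_{\widehat{u}}\}^\perp$-type constraints), so that any nonzero choice of vectors at the gadget vertices propagates the orthogonality requirement back to the copies. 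To control the copies themselves, assign each $\widehat{v}$ the full ambient $k$-dimensional subspace $\Fset^k$ in the adversary's ``bad'' assignment, so that the vectors $x_{\widehat{v}}$ effectively range over all of $\Fset^k \setminus \{0\}$.

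Next I would establish the two directions of the equivalence. For soundness, exhibit the explicit bad $k$-subspace assignment described above and show that any valid orthogonal choice for $G$ would yield nonzero vectors $\{x_{\widehat{v}}\}_{v \in V(H)}$ in $\Fset^k$ satisfying $\langle x_{\widehat{u}}, x_{\widehat{v}}\rangle = 0$ for every edge $\{u,v\} \in E(H)$, namely, a $k$-dimensional orthogonal representation of $H$ over $\Fset$; the non-existence of such a representation then yields $\chv(G,\Fset) > k$. For completeness, assume $H$ admits a $k$-dimensional orthogonal representation $\{y_v\}_{v \in V(H)}$ and take an arbitrary $k$-subspace assignment to $V(G)$; process the vertices in a suitable order (copies first, guided by $y_v$, then gadget vertices, which only need to find a nonzero vector in a subspace constrained by at most $k-1$ orthogonality conditions) and use a degeneracy-style argument analogous to the $\chv(G,\Fset) \leq \Delta(G)+1$ bound mentioned in the introduction to complete the choice.

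The hardest step will be the design of the edge gadget, since it must simultaneously achieve three things: (i) keep $G$ bipartite; (ii) under the adversarial assignment, truly pin the relation $\langle x_{\widehat{u}}, x_{\widehat{v}} \rangle = 0$ rather than merely allow it, which requires ruling out ``degenerate'' escapes such as the auxiliary vertex receiving a vector in an unintended direction; and (iii) under any subspace assignment, be benign whenever an orthogonal representation of $H$ exists. A secondary difficulty is the finite-field case, where nonzero self-orthogonal vectors exist and can trivialize naive gadgets; this can be handled by designing the adversarial subspaces to lie in general position (avoiding isotropic directions) and, if necessary, by using slightly larger gadgets that exploit Theorem~\ref{thm:GeneralUpper}-style constructions to ensure the forced constraints are nondegenerate. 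The same construction should work uniformly for $\R$ and for every finite field, matching the statement of the theorem.
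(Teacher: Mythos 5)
Your proposal has a genuine and, I believe, unfixable gap in the completeness direction, and this gap is rooted in a logical mismatch between the two problems rather than in a missing technical detail.

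The statement $\chv(G,\Fset) \leq k$ is a universally quantified claim: for \emph{every} $k$-subspace assignment to $V(G)$, in every ambient dimension $t$, a valid choice exists. By contrast, the orthogonality dimension of $H$ being at most $k$ is a purely existential claim: there is \emph{one} family of vectors in $\Fset^k$ realizing $H$. Your completeness argument wants to pass from the latter to the former by ``processing vertices guided by $y_v$,'' but the vectors $y_v$ live in $\Fset^k$ while the adversarially chosen subspaces $W_{\widehat v}\subseteq\Fset^t$ are arbitrary and bear no relation to $y_v$ at all; there is no way to ``guide'' the choice in $W_{\widehat v}$ by $y_v$. The only tool you actually invoke there is a degeneracy-style bound like $\chv(G,\Fset)\leq\Delta(G)+1$, but such a bound is purely structural in $G$ and entirely independent of $H$ and of whether $H$ has an orthogonal representation. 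If a degeneracy argument sufficed to prove $\chv(G,\Fset)\leq k$, it would do so for NO-instances of the orthogonality problem as well, contradicting soundness. Conversely, if your edge gadgets add enough structure (cycles, density) to make soundness work, Theorem~\ref{thm:degreeIntro} and Proposition~\ref{prop:chrachterization} warn that they will tend to push $\chv(G,\Fset)$ above $k$ unconditionally, again breaking completeness. This tension is not incidental: an $\NP$-complete problem cannot be equivalent, instance by instance, to a $\Pi_2$-flavored ``for all assignments there exists a choice'' statement unless the construction explicitly collapses the universal quantifier, and nothing in your sketch does that.

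The soundness direction is also weaker than it looks. To force $\langle x_{\widehat u}, x_{\widehat v}\rangle = 0$ via a bipartite gadget you must do it with vertices on the opposite side, each carrying only a $k$-dimensional subspace. A single subdividing vertex $m$ adjacent to $\widehat u,\widehat v$ with $W_m=\Fset^k$ does not force orthogonality: for $k\geq 3$ the two vectors $x_{\widehat u},x_{\widehat v}$ never span $\Fset^k$, so a legal nonzero $x_m$ always exists whether or not $\langle x_{\widehat u},x_{\widehat v}\rangle=0$. Making the gadget bigger to close this loophole runs directly into the completeness problem above. The paper avoids all of this by reducing from $3$SAT instead: the $\exists$-gadget (Definition~\ref{def:exist_gadget}) is engineered so that, for \emph{any} subspace assignment, the player can commit the $\inver$ vertex in a way that neutralizes one branch (Lemma~\ref{lemma:gadget_graph_properties}, Item~3), encoding a Boolean choice of a variable; and for the hard direction, a single explicit assignment (Item~4) forces a satisfying assignment to be extracted. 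This explicitly manages the quantifier structure, which is exactly what your reduction is missing.
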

\noindent
The proof of Theorem~\ref{thm:hardness} is inspired by the approach taken in a proof due to Rubin~\cite{ERT79} for the $\Pi_2$-hardness of the decision problem associated with the (color) choice number. His proof involves a delicate construction of several gadget graphs used to efficiently map an instance of the $\Pi_2$-variant of the satisfiability problem to an instance of the color choosability problem. These gadgets, however, do not fit the setting of subspace choosability. In fact, the characterization of $2$-subspace choosable graphs over the reals, given in~\cite{HaynesPSWM10}, implies that the instances produced by the reduction of~\cite{ERT79} are never subspace choosable over this field. To overcome this difficulty, we construct and analyze a different gadget graph that allows us, combined with ideas of Gutner and Tarsi~\cite{Gutner96,GutnerT09}, to obtain the $\NP$-hardness result stated in Theorem~\ref{thm:hardness}. Our analysis involves a characterization, stated below, of the $2$-subspace choosable graphs over finite fields, extending the characterizations given in~\cite{HaynesPSWM10} for the real and complex fields.
\begin{proposition}\label{prop:chrachterization}
For every finite field $\Fset$, a graph is $2$-subspace choosable over $\Fset$ if and only if it contains no cycles.
\end{proposition}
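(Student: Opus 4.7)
The easy direction of the characterization is a standard degeneracy argument. If $G$ is a forest, root each connected component and process vertices from root to leaves: pick any nonzero vector in $W_r$ at each root $r$, and at a non-root vertex $v$ whose parent $u$ has already been assigned $x_u$, pick any nonzero vector from the subspace $W_v \cap x_u^\perp$, which has dimension at least $2 + (t-1) - t = 1$. For the reverse direction, the plan is to show that for every integer $n \geq 3$, the cycle $C_n$ is not $2$-vector choosable over $\Fset$. This suffices: if $G$ contains a cycle $C$, a bad $2$-subspace assignment on the vertices of $C$ with subspaces in some $\Fset^t$ extends to a bad assignment on $G$ by assigning an arbitrary $2$-dimensional subspace of $\Fset^t$ (say, $\mathrm{span}(e_1, e_2)$) to each non-cycle vertex, since any valid global vector choice would restrict to a valid one on $C$.

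To construct a bad assignment for $C_n$, the plan is to use a projective viewpoint. For two $2$-dimensional subspaces $W, W' \subseteq \Fset^t$ with invertible pairing matrix $M_{W, W'} := (u_a \cdot v_b)_{a, b}$ in fixed bases $u_1, u_2$ of $W$ and $v_1, v_2$ of $W'$, the rule $[x] \mapsto [y]$ with $y$ spanning the $1$-dimensional subspace $W' \cap x^\perp$ defines a projective isomorphism $\phi_{W, W'}: \mathbb{P}(W) \to \mathbb{P}(W')$ represented on row vectors by the matrix $N_{W, W'} := M_{W, W'} \cdot J$, where $J = \bigl(\begin{smallmatrix} 0 & -1 \\ 1 & 0 \end{smallmatrix}\bigr)$. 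Given a cyclic assignment $(W_1, \ldots, W_n)$ in which every consecutive pairing matrix (including the cyclic one $M_n$ between $W_n$ and $W_1$) is invertible, valid nonzero vector choices on $C_n$ correspond exactly to fixed points in $\mathbb{P}^1(\Fset)$ of the composed Mobius transformation $\phi := \phi_{W_n, W_1} \circ \phi_{W_{n-1}, W_n} \circ \cdots \circ \phi_{W_1, W_2}$, whose matrix is the product $N_1 N_2 \cdots N_n$ in $GL_2(\Fset)$.

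The key observation, and the source of the contrast with the complex case, is that every finite field $\Fset$ admits an irreducible quadratic polynomial (since $\Fset$ is not algebraically closed), so there exists $T \in GL_2(\Fset)$ whose characteristic polynomial is irreducible over $\Fset$; such $T$ has no eigenvectors in $\Fset^2$, hence no fixed points on $\mathbb{P}^1(\Fset)$. The plan is to choose the pairing matrices so that $\phi = T$: set $M_1 = \cdots = M_{n-1} = J^{-1}$, so that $N_i = I$ for every $i < n$, and $M_n = T \cdot J^{-1}$, so that $N_n = T$. All prescribed $M_i$ are invertible, so each $\phi_i$ is well-defined as a projective isomorphism, and $N_1 N_2 \cdots N_n = T$ has no $\Fset$-rational fixed points, so $C_n$ is not $2$-vector choosable with this assignment. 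The main obstacle, and the bulk of the technical work, is to realize these prescribed pairing matrices by actual $2$-dimensional subspaces $W_1, \ldots, W_n$ in a common ambient space $\Fset^t$ for some $t = O(n)$. This is a bookkeeping exercise: build the $W_i$ inductively, introducing fresh coordinates for each new $W_{i+1}$ to satisfy the four linear pairing constraints against $W_i$ while keeping $W_{i+1}$ two-dimensional; the only delicate step is the closing subspace $W_n$, which must simultaneously satisfy the four pairing equations with $W_{n-1}$ and the four with $W_1$, but with an appropriate choice of disjoint coordinate supports for these two groups of constraints they decouple cleanly and yield a genuine $2$-dimensional $W_n$ with the prescribed invertible pairings.
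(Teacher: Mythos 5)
Your proof is correct and takes a genuinely different route from the paper's. The forest direction is the same (degeneracy), but for cycles the paper gives two explicit low-dimensional bad assignments — one for $C_3$ in $\Fset^3$ and one for $C_4$ in $\Fset^4$ (or $\Fset^5$ in characteristic $2$) — each depending on a parameter $\alpha$ whose existence is shown by a counting argument, and then extends to general $\ell$ by splicing in copies of $\linspan(e_1,e_2)$ according to the parity of $\ell$. Your approach replaces all of this with a single uniform argument: the orthogonality constraints around the cycle compose to a M\"obius transformation of $\mathbb{P}^1(\Fset)$ whose matrix is the product of the normalized pairing matrices $N_i = M_iJ$, a valid vector choice is a fixed point of this transformation, and one can always prescribe the $M_i$ so that the product is a matrix $T$ with irreducible characteristic polynomial, which has no fixed point because $\Fset$ is not algebraically closed. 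The bookkeeping you defer is indeed routine: put $W_i \subseteq \Fset^{2n}$ with support in the $(i{-}1)$-th and $i$-th coordinate blocks (each of size $2$, indices mod $n$), take the block-$i$ part of the basis of $W_i$ to be $e_1,e_2$, and take the block-$(i{-}1)$ part to be the columns of $M_{i-1}$; the $e_1,e_2$ components guarantee $\dim W_i = 2$, non-consecutive $W_i,W_j$ have disjoint support, and consecutive pairings are exactly the prescribed $M_i$. Your argument buys uniformity in both $n$ and the characteristic, isolates the structural obstruction (fixed-point-freeness in $PGL_2$) cleanly, and applies verbatim over $\R$ as well, recovering the paper's Remark~\ref{remark:char_R}; what the paper's version buys is economy of ambient dimension (constant rather than $O(n)$), which is irrelevant here but is exploited later in the paper (Claim~\ref{claim:prop_4}), where the explicit $C_4$ construction in $\Fset^5$ is reused inside the hardness gadget.
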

\noindent
While Theorem~\ref{thm:hardness} indicates the hardness of efficiently determining the subspace choice number of bipartite graphs, it would be natural to expect the stronger notion of $\Pi_2$-hardness to hold for this problem.

\subsection{Outline}
The rest of the paper is organized as follows.
In Section~\ref{sec:degree}, we prove Theorem~\ref{thm:degreeIntro}, relating the subspace choice number of a graph over a general field to its average degree. We also prove there an improved bound for certain graphs and discuss a limitation of our approach.
In section~\ref{sec:K_n}, we prove the upper bound on the subspace choice number of complete graphs over finite fields given in Theorem~\ref{thm:n-sqrt{n}}.
In Section~\ref{sec:2-vec}, we prove the characterization of $2$-subspace choosable graphs over finite fields given in Proposition~\ref{prop:chrachterization}, which will be used in the following sections.
In Section~\ref{sec:complete_bipartite}, we prove several upper and lower bounds on the subspace choosability of complete bipartite graphs in the asymmetric setting, and in particular confirm Theorems~\ref{thm:Lower},~\ref{thm:GeneralUpper}, and~\ref{thm:Intro_K_2,m}.
Finally, in Section~\ref{sec:hardness}, we prove our hardness result given in Theorem~\ref{thm:hardness}.

\section{Subspace Choosability and Average Degree}\label{sec:degree}

In this section we relate the subspace choice number of a graph over a general field to its average degree and prove Theorem~\ref{thm:degreeIntro}.
We start with the following definition of $k$-partitioned graphs (for an example, see Lemma~\ref{lemma:projective}).

\begin{definition}\label{def:partitioned}
Let $G=(V,E)$ be a graph.
For every vertex $v \in V$, let $E_v \subseteq E$ denote the set of edges of $G$ that are incident with $v$.
We say that the graph $G$ is {\em $k$-partitioned} if it is possible to partition every set $E_v$, $v \in V$, into $k$ sets $E_v^{(1)}, \ldots, E_v^{(k)}$ (some of which may be empty), such that for every function $g: V \rightarrow [k]$ there exist two adjacent vertices $v_1,v_2 \in V$ such that $\{v_1, v_2\} \in E_{v_1}^{(g(v_1))} \cap E_{v_2}^{(g(v_2))}$.
\end{definition}

The following theorem shows that the subspace choice number of a $k$-partitioned graph exceeds $k$ over any field.
\begin{theorem}\label{thm:k-partitioned}
For every $k$-partitioned graph $G$ and for every field $\Fset$, $\chv(G,\Fset) > k$.
\end{theorem}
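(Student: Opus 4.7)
The plan is to exhibit, given a $k$-partitioned graph $G=(V,E)$ with a witnessing partition $\{E_v^{(i)}\}_{i \in [k]}$ of each set $E_v$, a specific $k$-subspace assignment over $\Fset$ for which no valid orthogonal vector choice exists. I will work in the ambient space $\Fset^t$ with $t = |E| + k\cdot|V|$, whose coordinates I index by the disjoint union of $E$ and $V \times [k]$; let $y_e$ (for $e \in E$) and $z_v^{(i)}$ (for $v \in V$ and $i \in [k]$) denote the corresponding standard basis vectors. For every $v \in V$ and $i \in [k]$, define
\[
\tilde u_v^{(i)} \;=\; z_v^{(i)} + \sum_{e \in E_v^{(i)}} y_e,
\]
and let $W_v = \linspan\{\tilde u_v^{(1)}, \ldots, \tilde u_v^{(k)}\}$. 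Because the coordinates $z_v^{(i)}$ distinguish the generators $\tilde u_v^{(i)}$ for a fixed $v$ regardless of whether some $E_v^{(i)}$ are empty, the $\tilde u_v^{(i)}$ are linearly independent and $\dim W_v = k$ exactly, so this is a legitimate $k$-subspace assignment.

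The next step is a direct inner-product computation. For distinct vertices $v_1, v_2$ the $z$-coordinates contribute nothing, and
\[
\langle \tilde u_{v_1}^{(i)},\, \tilde u_{v_2}^{(j)} \rangle \;=\; \big| E_{v_1}^{(i)} \cap E_{v_2}^{(j)} \big|.
\]
Since the graph is simple, at most one edge is incident to both $v_1$ and $v_2$, so this intersection has size $0$ or $1$. In particular, if $v_1 v_2$ is an edge and $i_1, i_2$ are the unique indices with $\{v_1,v_2\} \in E_{v_1}^{(i_1)}$ and $\{v_1,v_2\} \in E_{v_2}^{(i_2)}$, then the inner product equals $1$ for $(i,j)=(i_1,i_2)$ and vanishes otherwise. (Over $\C$ one replaces the second factor by its conjugate, but the entries are $0/1$ real numbers, so this changes nothing.)

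Now suppose toward contradiction that there is a valid choice of nonzero $x_v \in W_v$ with $\langle x_{v_1}, x_{v_2} \rangle = 0$ for every edge $v_1 v_2$. Write $x_v = \sum_{i=1}^{k} a_v^{(i)} \tilde u_v^{(i)}$; since $x_v \neq 0$ and the $\tilde u_v^{(i)}$ are independent, some $a_v^{(i)}$ is nonzero, and I define $g : V \rightarrow [k]$ by letting $g(v)$ be any such index with $a_v^{(g(v))} \neq 0$. By the $k$-partitioned hypothesis applied to $g$, there exist adjacent $v_1, v_2$ with $\{v_1, v_2\} \in E_{v_1}^{(g(v_1))} \cap E_{v_2}^{(g(v_2))}$. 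For this edge the computation above collapses the double sum for $\langle x_{v_1}, x_{v_2} \rangle$ to the single term $a_{v_1}^{(g(v_1))} \cdot a_{v_2}^{(g(v_2))}$ (up to conjugation in the complex case), which is nonzero by construction, contradicting the assumed orthogonality.

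There is no real obstacle in this plan: the one thing that needs care is ensuring that $\dim W_v = k$ even when some classes $E_v^{(i)}$ are empty or when distinct classes span the same set of edges, and the auxiliary coordinates $z_v^{(i)}$ were introduced precisely to handle that uniformly over all fields. The construction is furthermore field-agnostic, since the only arithmetic used in the crucial inner product is the identity $1 \cdot 1 = 1$, so the conclusion $\chv(G,\Fset) > k$ holds over every $\Fset$.
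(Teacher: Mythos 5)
Your proposal is correct and takes essentially the same approach as the paper: you assign each vertex the span of the (appropriately padded) indicator vectors of its partition classes, extract from any hypothetical valid choice a coloring $g$ picking a nonzero coefficient for each vertex, and apply the $k$-partitioned hypothesis to locate an edge on which the two chosen vectors cannot be orthogonal. The only cosmetic difference is that you build the auxiliary coordinates $z_v^{(i)}$ into the construction from the outset and carry out the inner-product computation explicitly, whereas the paper first describes the unpadded indicator vectors, then patches in the extra coordinates to guarantee $\dim W_v = k$, and finishes with a support argument rather than an explicit double sum.
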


\begin{proof}
Fix an arbitrary field $\Fset$. Let $G=(V,E)$ be a $k$-partitioned graph, and for every vertex $v \in V$, let $E_v = E_v^{(1)} \cup \cdots \cup E_v^{(k)}$ be the corresponding partition of the edges incident with $v$, as in Definition~\ref{def:partitioned}.
We use these partitions to define a $k$-subspace assignment over $\Fset$ to the vertices of $G$ involving vectors from the space $\Fset^{|E|}$, where each entry corresponds to an edge $e \in E$.
To a vertex $v \in V$ we assign the subspace $W_v$ spanned by the $k$ vectors $w^{(1)}_v, \ldots, w^{(k)}_v$, where $w^{(i)}_v$ is the $0,1$ indicator vector of the subset $E_v^{(i)}$ of $E$.
In fact, some of the sets $E^{(i)}_v$ might be empty, and thus some of the vectors $w^{(i)}_v$ might be zeros, resulting in subspaces $W_v$ of dimension smaller than $k$.
To fix it, one can increase the length of the vectors from $|E|$ to $|E|+k \cdot |V|$ and to add to each of the $k \cdot |V|$ vectors $w^{(i)}_v$ a nonzero entry in a coordinate on which all the others have zeros. These entries ensure that the dimension of every subspace $W_v$ is precisely $k$.
For simplicity of notation, we refer from now on to these modified vectors as $w^{(i)}_v$.

We show now that no choice of nonzero vectors from these subspaces satisfies that every two adjacent vertices receive orthogonal vectors over $\Fset$.
To see this, consider some choice of a nonzero vector $x_v \in W_v$ for each vertex $v \in V$.
We define a function $g: V \rightarrow [k]$ as follows.
For every $v \in V$, $x_v$ is a nonzero linear combination of the vectors $w^{(1)}_v, \ldots, w^{(k)}_v$, hence there exists some $j_v \in [k]$ for which the coefficient of $w^{(j_v)}_v$ in this linear combination is nonzero. We define $g(v)$ to be such an index $j_v$.
By assumption, there exist two adjacent vertices $v_1,v_2 \in V$ such that $\{v_1, v_2\} \in E_{v_1}^{(g(v_1))} \cap E_{v_2}^{(g(v_2))}$.
This implies that the entry that corresponds to the edge $\{v_1,v_2\}$ of $G$ is nonzero in both $x_{v_1}$ and $x_{v_2}$.
However, the supports of the subspaces $W_{v_1}$ and $W_{v_2}$ intersect at this single entry, implying that the vectors $x_{v_1}$ and $x_{v_2}$ are not orthogonal over $\Fset$.
This implies that there exists a $k$-subspace assignment over $\Fset$ to the vertices of $G$ with no appropriate choice of nonzero vectors, yielding that $\chv(G,\Fset) > k$, as required.
\end{proof}

Theorem~\ref{thm:k-partitioned} motivates the problem of determining the largest integer $k$ for which a given graph is $k$-partitioned.
The following lemma uses a probabilistic argument to prove a lower bound on this quantity in terms of the average degree.

\begin{lemma}\label{lemma:k-partitioned}
There exists a constant $c>0$ such that every graph with average degree $d>1$ is $k$-partitioned for some $k \geq c \cdot \sqrt{\frac{d}{\ln d}}$.
\end{lemma}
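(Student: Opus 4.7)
The plan is to prove the lemma by the probabilistic method applied directly to Definition~\ref{def:partitioned}. Set $k = \lfloor c\sqrt{d/\ln d}\rfloor$ for a small absolute constant $c > 0$ to be determined. For every vertex $v \in V$ and every edge $e \in E_v$, sample a class $c_v(e) \in [k]$ independently and uniformly, and let $E_v^{(i)} = \{e \in E_v : c_v(e) = i\}$; some classes may be empty, which Definition~\ref{def:partitioned} permits. The aim is to show that, with positive probability over these samples, every function $g : V \to [k]$ is witnessed by some edge, certifying that $G$ is $k$-partitioned.

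For any fixed $g$ and any edge $e = \{u,v\}$, the witness event $\{c_u(e)=g(u)\}\cap\{c_v(e)=g(v)\}$ depends only on the two independent uniform samples at the endpoints of $e$ and has probability exactly $1/k^2$. Crucially, since the samples at distinct edges are mutually independent, the witness events across $e \in E$ are independent; hence the probability that no edge witnesses $g$ equals $(1-1/k^2)^{|E|} \leq \exp(-|E|/k^2)$. A union bound over the $k^{|V|}$ candidate functions gives
\[ \Pr[\text{some } g \text{ has no witness}] \;\leq\; \exp\bigl(|V|\ln k - |E|/k^2\bigr). \]
Substituting $|E| = d|V|/2$ and $k \leq c\sqrt{d/\ln d}$ turns the exponent into $|V|\bigl(\ln k - \tfrac{\ln d}{2c^2}\bigr)$. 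Since $\ln k \leq \tfrac{1}{2}\ln d + \ln c$, the exponent is bounded above by $|V|\bigl(\ln c - \tfrac{1-c^2}{2c^2}\ln d\bigr)$, which, for any fixed $c<1$, is strictly negative once $d$ exceeds an absolute threshold $d_0(c)$. In that regime, the union bound is less than $1$ and a deterministic partition witnessing the $k$-partitioned property exists.

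The main subtlety is that a single constant $c$ must handle the full range $d > 1$. The argument above works in the regime $d \geq d_0$; in the complementary regime $d \in (1, d_0]$, the lemma is meant to collapse to a trivial $k=1$ claim, handled by the one-block partition $E_v^{(1)} = E_v$ for which the unique function $g:V\to[1]$ is witnessed by any edge (and $d > 1$ guarantees $E \neq \emptyset$). The delicate step is therefore the calibration of $c$ so that both regimes join consistently; the two main probabilistic estimates themselves (the per-$g$ independence across edges, and the exponent being negative) are routine.
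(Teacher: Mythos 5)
Your proof is essentially the paper's: the same random partition of each $E_v$ into $k$ classes, the same per-edge witness probability $1/k^2$ with independence across distinct edges, and the same union bound over the $k^{|V|}$ functions $g$; the only cosmetic difference is that you set $k=\lfloor c\sqrt{d/\ln d}\rfloor$ directly and optimize $c$, whereas the paper takes $k$ to be the largest integer with $d>2k^2\ln k$ and then observes this yields the stated lower bound. The small-$d$ calibration concern you flag is genuine (as $d\to 1^+$, $\sqrt{d/\ln d}\to\infty$ while the attainable $k$ stays at $1$, so the ``collapse to $k=1$'' step does not actually close the gap), but the paper's own proof glosses over the same point, and it is immaterial for the paper's use of the lemma, which targets large average degree.
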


\begin{proof}
Let $G=(V,E)$ be a graph with average degree $d>1$. Note that $2 \cdot |E| = |V| \cdot d$.
Let $k$ be the largest integer satisfying
\begin{eqnarray}\label{eq:d_vs_k}
d > 2k^2 \cdot \ln k.
\end{eqnarray}
Observe that for an appropriate choice of the constant $c$, it holds that $k \geq c \cdot \sqrt{\frac{d}{\ln d}}$.
We prove that $G$ is $k$-partitioned by a probabilistic argument.
For every vertex $v \in V$, we define a random partition of the set $E_v$ of the edges incident with $v$, into $k$ sets $E_v^{(1)}, \ldots, E_v^{(k)}$ (some of which may be empty) as follows. For each edge $e \in E_v$, we pick at random, uniformly and independently, some $j \in [k]$, and put $e$ in $E_v^{(j)}$.
We claim that the obtained partitions satisfy with positive probability the condition given in Definition~\ref{def:partitioned}, namely, that for every function $g: V \rightarrow [k]$ there exist two adjacent vertices $v_1,v_2 \in V$ such that
\begin{eqnarray}\label{eq:v_1,v_2}
\{v_1, v_2\} \in E_{v_1}^{(g(v_1))} \cap E_{v_2}^{(g(v_2))}.
\end{eqnarray}
Indeed, for every fixed function $g: V \rightarrow [k]$ and for every edge $\{v_1,v_2\} \in E$, the probability that the event~\eqref{eq:v_1,v_2} occurs is $1/k^2$.
Hence, the probability that for all edges of $E$ this event does not occur is $(1-1/k^2)^{|E|}$. By the union bound, the probability that there exists a function $g: V \rightarrow [k]$ such that for all edges of $E$ the event~\eqref{eq:v_1,v_2} does not occur is at most
\[ k^{|V|} \cdot \bigg (1-\frac{1}{k^2} \bigg )^{|E|} \leq k^{|V|} \cdot e^{-|E|/k^2} = \big (e^{\ln k - d/(2 k^2)} \big )^{|V|}.\]
By~\eqref{eq:d_vs_k}, the above is smaller than $1$, hence with positive probability the random partition satisfies the required condition, and thus $G$ is $k$-partitioned
\end{proof}
\noindent
Combining Theorem~\ref{thm:k-partitioned} and Lemma~\ref{lemma:k-partitioned} completes the proof of Theorem~\ref{thm:degreeIntro}.

It is natural to ask whether Theorem~\ref{thm:k-partitioned} can be used to obtain better lower bounds on the subspace choice number of graphs than the one achieved by Theorem~\ref{thm:degreeIntro}.
The following lemma shows that for graphs with similar average and maximum degrees, Lemma~\ref{lemma:k-partitioned} is tight up to the logarithmic term.
Hence, the approach suggested by Theorem~\ref{thm:k-partitioned} cannot yield significantly better bounds for such graphs.
\begin{lemma}\label{lemma:not-k-partitioned}
There exists a constant $c>0$ such that every graph with maximum degree $D$ is not $k$-partitioned whenever $k \geq c \cdot \sqrt{D}$.
\end{lemma}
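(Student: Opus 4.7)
The plan is to show the contrapositive by a probabilistic argument: given any candidate partition of the edge sets $E_v$ into $k$ parts, I will produce a function $g:V\to[k]$ that defeats it, i.e., such that no edge $\{v_1,v_2\}$ lies simultaneously in $E_{v_1}^{(g(v_1))}\cap E_{v_2}^{(g(v_2))}$. The tool I would use is the symmetric Lov\'{a}sz Local Lemma.

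Concretely, fix any partition $E_v=E_v^{(1)}\cup\cdots\cup E_v^{(k)}$ for each $v\in V$. For a vertex $v$ and an incident edge $e\in E_v$, let $j_v(e)\in[k]$ denote the unique index with $e\in E_v^{(j_v(e))}$. Choose $g:V\to[k]$ uniformly and independently at random over the vertices. For each edge $e=\{u,v\}\in E$, let $A_e$ be the event that $g(u)=j_u(e)$ and $g(v)=j_v(e)$. Since $g(u),g(v)$ are independent uniform elements of $[k]$, each event satisfies $\Pr[A_e]=1/k^2$. Moreover, $A_e$ depends only on $g(u)$ and $g(v)$, hence $A_e$ is mutually independent of the family $\{A_{e'} : e' \in E, \; e'\cap e=\emptyset\}$. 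The number of edges $e'\neq e$ that share an endpoint with $e$ is at most $2(D-1)$, since each endpoint has degree at most $D$.

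Applying the symmetric Lov\'{a}sz Local Lemma, if
\[ e\cdot\frac{1}{k^2}\cdot\bigl(2(D-1)+1\bigr)\leq 1, \]
then with positive probability none of the events $A_e$ occurs. Solving, it suffices to have $k^2\geq e\cdot(2D-1)$, which is implied by $k\geq c\cdot\sqrt{D}$ for any constant $c\geq\sqrt{2e}$. For any such $k$, a function $g:V\to[k]$ avoiding all events $A_e$ exists, meaning that for every edge $\{v_1,v_2\}$ at least one of $g(v_1)\neq j_{v_1}(\{v_1,v_2\})$ or $g(v_2)\neq j_{v_2}(\{v_1,v_2\})$ holds. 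This $g$ witnesses that the partition fails the condition of Definition~\ref{def:partitioned}, and since the partition was arbitrary, $G$ is not $k$-partitioned.

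I don't see a serious obstacle here; the main things to get right are the correct identification of the dependency structure (which is clean because $A_e$ is a function only of the two colors at the endpoints of $e$) and a careful bound on the maximum degree of the dependency graph so that the LLL condition reduces to $k=\Omega(\sqrt{D})$. A direct union bound instead of the LLL would require $k=\Omega(\sqrt{D\log n})$ and therefore not reach the claimed $\sqrt D$ threshold, which is why I would invoke the LLL.
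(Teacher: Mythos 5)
Your proof is correct and follows essentially the same route as the paper: both define the bad events $A_e$ in the same way, observe $\Pr[A_e]=1/k^2$ with dependency degree at most $2(D-1)$, and close via the symmetric Lov\'asz Local Lemma to obtain $k=\Omega(\sqrt{D})$. The only cosmetic difference is that you make the constant $c=\sqrt{2e}$ explicit, while the paper leaves it unspecified.
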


The proof of Lemma~\ref{lemma:not-k-partitioned} uses the Lov{\'a}sz local lemma stated below (see, e.g.,~\cite[Chapter~5]{AlonS16}).

\begin{lemma}[Lov{\'a}sz Local Lemma]\label{lemma:local}
Let $\calE$ be a collection of events such that for each $A \in \calE$, it holds that $\Prob{}{A} \leq p <1$ and that $A$ is mutually independent of a set of all but at most $d$ of the other events of $\calE$. If $e \cdot p \cdot (d+1) \leq 1$, then with positive probability none of the events of $\calE$ occurs.
\end{lemma}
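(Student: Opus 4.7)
The plan is to negate the definition of $k$-partitioned and then show, via a probabilistic argument, that for every partition of the edge sets there exists a ``bad'' function $g$. Concretely, a graph $G=(V,E)$ fails to be $k$-partitioned precisely when for every choice of partitions $E_v = E_v^{(1)} \cup \cdots \cup E_v^{(k)}$, one can find a function $g:V \rightarrow [k]$ such that no edge $\{v_1,v_2\} \in E$ satisfies $\{v_1,v_2\} \in E_{v_1}^{(g(v_1))} \cap E_{v_2}^{(g(v_2))}$. So the task is: given an arbitrary such collection of partitions on a graph of maximum degree $D$, produce a function $g$ avoiding this coincidence on every edge, whenever $k \geq c\sqrt{D}$.

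I would fix such a collection of partitions and pick $g:V \rightarrow [k]$ by choosing $g(v) \in [k]$ uniformly and independently for every $v \in V$. For each edge $e=\{v_1,v_2\} \in E$, let $A_e$ denote the ``bad'' event that $e \in E_{v_1}^{(g(v_1))} \cap E_{v_2}^{(g(v_2))}$. Since $e$ lies in exactly one of the parts $E_{v_1}^{(i)}$ and exactly one of the parts $E_{v_2}^{(j)}$, the event $A_e$ occurs iff $g(v_1)$ and $g(v_2)$ hit those two specific indices, giving $\Prob{}{A_e} = 1/k^2$. The key structural observation is that $A_e$ depends only on the two coordinates $g(v_1),g(v_2)$ of the random function $g$, so $A_e$ is mutually independent of all events $A_{e'}$ for which $e'$ shares no endpoint with $e$. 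Because $G$ has maximum degree $D$, each edge $e$ shares an endpoint with at most $2(D-1)$ other edges, so each $A_e$ is mutually independent of all but at most $2D-2$ other bad events.

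At this stage I would invoke the Lov{\'a}sz Local Lemma (Lemma~\ref{lemma:local}) with $p=1/k^2$ and $d=2D-2$. The hypothesis $e \cdot p \cdot (d+1) \leq 1$ becomes $e(2D-1)/k^2 \leq 1$, i.e., $k^2 \geq e(2D-1)$, which is satisfied for any $k \geq c\sqrt{D}$ with $c=\sqrt{3e}$ (this absorbs the small additive slack for small $D$). The conclusion of the LLL is that with positive probability none of the events $A_e$ occurs, so there exists a function $g$ for which every edge $\{v_1,v_2\}$ fails the intersection condition. Since this works for \emph{any} prescribed collection of partitions, $G$ cannot be $k$-partitioned, which is exactly the claim.

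The main thing to be careful about is the dependency bookkeeping: the events $A_e$ are clearly \emph{not} pairwise independent when $e,e'$ share a vertex, but they are mutually independent of the events defined on vertex-disjoint edges, which is the form the LLL requires. Once this is set up correctly, the numerical verification $e \cdot (1/k^2) \cdot (2D-1) \leq 1$ is immediate, and no further combinatorial input is needed.
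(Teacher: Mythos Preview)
Your proposal does not address the stated lemma. Lemma~\ref{lemma:local} is the Lov{\'a}sz Local Lemma itself, a general probabilistic tool; the paper does not prove it but simply quotes it with a reference to~\cite[Chapter~5]{AlonS16}. What you have written is instead a proof of Lemma~\ref{lemma:not-k-partitioned} (that a graph of maximum degree $D$ is not $k$-partitioned once $k \geq c\sqrt{D}$), which \emph{applies} the Local Lemma as a black box. As a proof of Lemma~\ref{lemma:not-k-partitioned} your argument is correct and essentially identical to the paper's: the same random choice of $g$, the same bad events $A_e$ with probability $1/k^2$, the same dependency count of at most $2(D-1)$ incident edges, and the same numerical check $e \cdot (1/k^2) \cdot (2D-1) \leq 1$.

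If you were asked to prove Lemma~\ref{lemma:local}, you would need the standard inductive argument bounding the conditional probability of each event given the non-occurrence of a subcollection of the others; nothing in your write-up addresses this.
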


\begin{proof}[ of Lemma~\ref{lemma:not-k-partitioned}]
Let $G=(V,E)$ be a graph with maximum degree $D$, and let $k \geq c \cdot \sqrt{D}$ be an integer for some constant $c$ to be determined.
We prove that $G$ is not $k$-partitioned by a probabilistic argument.
For every vertex $v \in V$, consider a partition $E_v = E_v^{(1)} \cup \cdots \cup E_v^{(k)}$ of the set of the edges incident with $v$ into $k$ sets (some of the sets may be empty). We claim that there exists a function $g: V \rightarrow [k]$ such that no edge $\{v_1, v_2\}$ of $G$ satisfies $\{v_1, v_2\} \in E_{v_1}^{(g(v_1))} \cap E_{v_2}^{(g(v_2))}$.
To prove it, consider a random function $g: V \rightarrow [k]$ such that each value $g(v)$ for $v \in V$ is chosen uniformly and independently at random from $[k]$.
For every edge $e = \{v_1, v_2\} \in E$, let $A_e$ denote the event that $e \in E_{v_1}^{(g(v_1))} \cap E_{v_2}^{(g(v_2))}$.
The probability of each event $A_e$ is clearly $1/k^2$.
In addition, every event $A_e$ is mutually independent of the set of all the other events $A_{e'}$ but those satisfying $e \cap e' \neq \emptyset$, whose number is at most $2 \cdot (D-1)$.
By the Lov{\'a}sz local lemma (Lemma~\ref{lemma:local}), it follows that if
\[e \cdot \tfrac{1}{k^2} \cdot (2D-1) \leq 1\]
then with positive probability no event $A_e$ occurs.
This implies that for an appropriate choice of the constant $c$, there exists a function $g$ with the required property. Since this holds for all possible partitions of the sets $E_v$ into $k$ sets, it follows that $G$ is not $k$-partitioned, and we are done.
\end{proof}

We end this section by proving that for certain graphs, the logarithmic term in Lemma~\ref{lemma:k-partitioned} can be avoided.
Here, the proof does not use a probabilistic construction of partitions, but an explicit one, based on finite projective planes.
\begin{lemma}\label{lemma:projective}
For a prime power $q$, let $H$ be the $(q+1)$-partite graph with $q$ vertices in every part.
Let $G$ be a graph obtained from $H$ by removing at most $q-1$ of its vertices.
Then, $G$ is $q$-partitioned.
\end{lemma}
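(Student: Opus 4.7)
The plan is to realize $H$ via the affine plane of order $q$ (which exists since $q$ is a prime power) and obtain the partitions geometrically rather than probabilistically. Identify the $q+1$ parts of $H$ with the $q+1$ parallel classes of the affine plane, and identify the $q$ vertices of each part with the $q$ lines in the corresponding parallel class. Since any two lines from different parallel classes meet in exactly one point of the plane, each edge $\{v_1,v_2\}$ of $H$ is naturally labeled by a single point $p = \ell_{v_1} \cap \ell_{v_2}$. Every vertex $v$ corresponds to a line $\ell_v$ containing exactly $q$ points, so fixing an arbitrary bijection $\phi_v : \ell_v \to [q]$, I define the partition $E_v^{(1)}, \ldots, E_v^{(q)}$ by placing an edge $\{v,v'\}$ into $E_v^{(i)}$ whenever $\phi_v(\ell_v \cap \ell_{v'}) = i$. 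This is a genuine partition into $q$ (possibly empty) classes.

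With this setup, a function $g:V \to [q]$ induces, through the bijections $\phi_v^{-1}$, a function $h$ that assigns to each line $\ell_v$ a point $h(v) = \phi_v^{-1}(g(v)) \in \ell_v$. An edge $\{v_1,v_2\}$ belongs to $E_{v_1}^{(g(v_1))} \cap E_{v_2}^{(g(v_2))}$ if and only if both $h(v_1)$ and $h(v_2)$ equal the intersection point $\ell_{v_1} \cap \ell_{v_2}$; equivalently, if and only if $h(v_1)=h(v_2)$ and $v_1, v_2$ lie in different parallel classes. But two distinct lines on which $h$ takes the same point must share that point, and parallel lines share no points, so the second requirement is automatic. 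Hence the $q$-partition condition reduces to showing that for every induced map $h$, there exist two distinct remaining vertices $v_1, v_2$ with $h(v_1)=h(v_2)$.

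This last step is a simple pigeonhole argument, which is where the ``at most $q-1$ removed vertices'' hypothesis enters. The affine plane has $q^2+q$ lines and $q^2$ points, so after removing at most $q-1$ vertices from $H$ there remain at least $q^2+q-(q-1)=q^2+1$ vertices in $G$, while $h$ takes values in a set of only $q^2$ points. Therefore two remaining vertices must be assigned the same point, producing the required pair. The only real obstacle is the bookkeeping of the $\phi_v$ bijections needed to force the label set to be $[q]$ rather than the geometric set of points on $\ell_v$; once that is set up correctly, the geometric structure of the affine plane does all the work and no probabilistic argument is needed.
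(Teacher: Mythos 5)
Your proof is correct and is essentially the point--line dual of the paper's argument: the paper fixes a point $p$ in a projective plane of order $q$, takes the points other than $p$ as vertices, the $q+1$ lines through $p$ as the parts, and labels each edge by the line joining its endpoints (a line avoiding $p$), whereas you work in the affine plane, take lines as vertices, parallel classes as parts, and label each edge by the intersection point of its two lines. Both reduce to the identical pigeonhole count ($q^2+1$ remaining vertices mapped into $q^2$ labels), so this is the same proof viewed through projective duality.
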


\begin{proof}
The proof is based on a well-known construction of projective planes, some of whose properties are described next (see, e.g.,~\cite[Chapter~9]{Cameron94}).
For every prime power $q$, there exists a collection of $n = q^2+q+1$ elements called {\em points}, and $n$ sets of points, called {\em lines}, satisfying that every two lines intersect at a single point, every two points belong together to a single line, every point belongs to precisely $q+1$ of the lines, and every line includes precisely $q+1$ of the points.
Fix some point $p$, let $L_1, \ldots, L_{q+1}$ be the $q+1$ lines that include $p$, and put $L'_i = L_i \setminus \{p\}$ for every $i \in [q+1]$.
Note that the sets $L'_i$ are pairwise disjoint.
We view the graph $H$ as the graph on the vertex set $\cup_{i \in [q+1]}{L'_i}$ in which two vertices are adjacent if they belong to distinct sets $L'_i$.
Observe that every two vertices of $H$ are adjacent if and only if the line that includes their points does not include $p$.

Let $G=(V,E)$ be some subgraph of $H$ obtained by removing at most $q-1$ of its vertices, and observe that the number of its vertices satisfies
\[|V| \geq (q+1) \cdot q -(q-1) = q^2+1.\]
We show that $G$ is $q$-partitioned.
To do so, we assign to every edge of the graph $G$ the line that includes the points represented by its vertices.
This assignment induces for every vertex $v \in V$ a partition of the set $E_v$ of the edges incident with $v$ in $G$, where the sets of the partition correspond to the lines associated with the edges.
Observe that this partition of $E_v$ consists of at most $q$ sets.
Indeed, the vertex $v$ represents a point that belongs to $q+1$ lines, but no edge of $E_v$ is assigned the line that includes the point $p$ and the point of $v$.

In order to show that these partitions satisfy the condition of Definition~\ref{def:partitioned}, we shall verify that if one chooses for every point represented by a vertex in $G$ a line that corresponds to an edge incident with it, then there exist two adjacent vertices in $G$ for which the same line was chosen.
This indeed follows from the fact that no edge of $G$ corresponds to a line that includes $p$, hence the total number of lines associated with the edges of $G$ is at most $q^2$. Since the number of vertices in $G$ exceeds $q^2$, it follows that two vertices are assigned the same line.
Since this line does not include $p$, the two vertices must be adjacent in $G$, completing the proof.
\end{proof}

Note that the graph $H$ from Lemma~\ref{lemma:projective} is regular with degree $q^2$, hence the minimum degree of its subgraph $G$ is at least $q^2-q+1$, and yet $G$ is $q$-partitioned. This shows that the logarithmic term from Lemma~\ref{lemma:k-partitioned} is not needed for $G$.
By combining Lemma~\ref{lemma:projective} with Theorem~\ref{thm:k-partitioned}, we derive the following.
\begin{theorem}\label{thm:projective}
For a prime power $q$, let $H$ be the $(q+1)$-partite graph with $q$ vertices in every part.
Let $G$ be a graph obtained from $H$ by removing at most $q-1$ of its vertices.
Then, for every field $\Fset$, $\chv(G,\Fset) > q$.
\end{theorem}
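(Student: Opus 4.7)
The plan is to obtain Theorem~\ref{thm:projective} as an immediate consequence of the two results established just above it. Lemma~\ref{lemma:projective} already supplies the combinatorial structure: any graph $G$ obtained from $H$ by deleting at most $q-1$ vertices is $q$-partitioned. Theorem~\ref{thm:k-partitioned}, in turn, converts any $k$-partitioned graph into the lower bound $\chv(G,\Fset) > k$ that holds uniformly over every field $\Fset$. Applying the latter with $k=q$ to the graph $G$ provided by the former yields $\chv(G,\Fset) > q$, which is exactly the desired conclusion.

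In a bit more detail, I would first invoke Lemma~\ref{lemma:projective} to obtain, for each vertex $v$ of $G$, a partition of its incident edge set $E_v$ into at most $q$ parts (indexed by the $q$ lines through the corresponding projective point that avoid the distinguished point $p$), with the guarantee that every function $g\colon V \to [q]$ forces some edge $\{v_1,v_2\}$ to lie in $E_{v_1}^{(g(v_1))} \cap E_{v_2}^{(g(v_2))}$. I would then feed these partitions into the construction from the proof of Theorem~\ref{thm:k-partitioned}, which builds from them a $q$-subspace assignment over $\Fset$ admitting no orthogonal nonzero selection, thereby establishing $\chv(G,\Fset) > q$.

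Because the real content has already been absorbed into the projective-plane construction underlying Lemma~\ref{lemma:projective} and into the edge-indicator subspace assignment underlying Theorem~\ref{thm:k-partitioned}, there is no substantive obstacle in this step; the proof is purely a matter of stitching the two results together with $k=q$. The only point worth double-checking is the parameter alignment, namely that the number of parts produced at each vertex by Lemma~\ref{lemma:projective} is at most $q$ rather than $q+1$, which is precisely why the lemma discards the unique line through $p$ at every vertex before applying the partition.
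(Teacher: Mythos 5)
Your proposal is correct and matches the paper's argument exactly: the paper states that Theorem~\ref{thm:projective} follows by combining Lemma~\ref{lemma:projective} (which shows $G$ is $q$-partitioned) with Theorem~\ref{thm:k-partitioned} applied with $k=q$. Your remark about checking that each vertex's partition has at most $q$ (not $q+1$) parts is the right point to verify, and it is handled in Lemma~\ref{lemma:projective} exactly as you describe.
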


\section{Subspace Choosability in Complete Graphs over Finite Fields}\label{sec:K_n}

In this section we prove Theorem~\ref{thm:n-sqrt{n}}, which provides an upper bound on the subspace choice number of complete graphs over finite fields.
We start with two useful lemmas.
\begin{lemma}\label{lemma:ortho_vec}
For a finite field $\Fset$ and an integer $t$, let $w_1,w_2,w_3$ and $z_1,z_2,z_3$ be two triples of vectors in $\Fset^t$.
Then, there exist $\alpha_1, \alpha_2, \alpha_3 \in \Fset$, not all zeros, such that
\[ \big \langle \sum_{i \in [3]}{\alpha_i \cdot w_i}, \sum_{i \in [3]}{\alpha_i \cdot z_i} \big \rangle = 0.\]
\end{lemma}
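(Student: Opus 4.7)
The plan is to view the quantity
\[
Q(\alpha_1,\alpha_2,\alpha_3) \;=\; \Big\langle \sum_{i\in[3]} \alpha_i w_i,\; \sum_{i\in[3]} \alpha_i z_i \Big\rangle \;=\; \sum_{i,j \in [3]} \alpha_i \alpha_j \cdot \langle w_i, z_j \rangle
\]
as a single homogeneous polynomial of degree at most $2$ in the three variables $\alpha_1,\alpha_2,\alpha_3$ over the finite field $\Fset$. The goal is then precisely to exhibit a nontrivial zero of $Q$ in $\Fset^3$. Since $Q(0,0,0)=0$, the trivial zero is always available, so the question is whether another zero exists.

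The key step is to invoke the Chevalley--Warning theorem: for any polynomial $f$ over a finite field $\Fset$ of characteristic $p$ in $n$ variables with $\deg(f) < n$, the number of zeros of $f$ in $\Fset^n$ is divisible by $p$. In our setting we have $\deg(Q) \le 2$ and $n=3$, so $\deg(Q) < n$, and Chevalley--Warning applies. Since the set of zeros is nonempty (it contains the origin) and its size is a positive multiple of $p \ge 2$, there must exist at least one additional zero, which is necessarily nontrivial. This produces the required $\alpha_1,\alpha_2,\alpha_3 \in \Fset$, not all zero, with $Q(\alpha_1,\alpha_2,\alpha_3)=0$.

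I would write the argument in essentially one short paragraph: expand the inner product to display $Q$ as a homogeneous polynomial of degree at most $2$ in three variables, note that $(0,0,0)$ is a zero, and apply Chevalley--Warning to conclude. There is no real obstacle here, since the statement fits the hypothesis of Chevalley--Warning exactly; one should only be careful to observe that the argument is insensitive to the characteristic of $\Fset$ (in particular it does not rely on symmetrizing the bilinear expression, which could be problematic in characteristic two) because we work with the polynomial $Q$ directly rather than with an associated symmetric bilinear form.
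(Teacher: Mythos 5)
Your proof is correct and follows essentially the same route as the paper: both view the inner product as a degree-$2$ polynomial in three variables over $\Fset$, note that the origin is a root, and invoke the Chevalley(–Warning) theorem to obtain a nontrivial zero. The only cosmetic difference is that you cite Chevalley--Warning (divisibility of the zero count by the characteristic) while the paper cites Chevalley's theorem directly; the two yield the conclusion identically here.
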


\begin{proof}
Consider the function $f: \Fset^3 \rightarrow \Fset$ defined by
$f(\alpha_1,\alpha_2,\alpha_3) = \big \langle \sum_{i \in [3]}{\alpha_i \cdot w_i}, \sum_{i \in [3]}{\alpha_i \cdot z_i} \big \rangle$.
The function $f$ is a degree $2$ polynomial on $3$ variables over $\Fset$, and $(0,0,0)$ forms a root of $f$.
The Chevalley theorem (see, e.g.,~\cite[Chapter~IV,~Theorem~1D]{Schmidt76}) implies that $f$ has another root, as required.
\end{proof}

\begin{lemma}\label{lemma:choice_3sub}
For a finite field $\Fset$ and an integer $t$, let $U_1, U_2, U_3$ be three subspaces of $\Fset^t$ whose dimensions satisfy
\[\dim(U_1) \geq 2,~~\dim(U_2) \geq 2,~~\mbox{and}~~\dim(U_1) + \dim(U_2) - \dim(U_3) \geq 5.\]
Then, there exist nonzero vectors $x_1 \in U_1$ and $x_2 \in U_2$ such that $\langle x_1, x_2 \rangle = 0$ and
\[\dim( U_3 \cap (x_1)^\perp \cap (x_2)^\perp ) \geq \dim(U_3)-1.\]
\end{lemma}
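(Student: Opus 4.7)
The plan is to reformulate the target dimension bound via the natural linear map $\pi \colon \Fset^t \to U_3^*$ defined by $\pi(x) = \langle x, \cdot\rangle|_{U_3}$ and then invoke Lemma~\ref{lemma:ortho_vec}. For any nonzero $x \in \Fset^t$, the subspace $U_3 \cap x^\perp$ has codimension $1$ in $U_3$ when $\pi(x) \neq 0$ and equals $U_3$ otherwise, so $U_3 \cap x_1^\perp \cap x_2^\perp$ is the joint kernel in $U_3$ of the functionals $\pi(x_1), \pi(x_2)$, of dimension $\dim(U_3) - \dim \linspan\{\pi(x_1), \pi(x_2)\}$. Hence the desired inequality $\dim(U_3 \cap x_1^\perp \cap x_2^\perp) \geq \dim(U_3) - 1$ is equivalent to $\pi(x_1)$ and $\pi(x_2)$ being linearly dependent in $U_3^*$.

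First I would dispose of the case where the kernel $U_1 \cap U_3^\perp$ of $\pi|_{U_1}$ (or symmetrically that of $\pi|_{U_2}$) is nonzero: pick any nonzero $x_1$ in this kernel, so that $\pi(x_1) = 0$, and then choose any nonzero $x_2 \in U_2 \cap x_1^\perp$, which exists because $\dim(U_2) \geq 2$ forces $U_2 \cap x_1^\perp$ to have dimension at least $1$. In the remaining case, both $\pi|_{U_1}$ and $\pi|_{U_2}$ are injective. I would then set $W := \pi(U_1) \cap \pi(U_2) \subseteq U_3^*$ and observe that
\[ \dim(W) \geq \dim(\pi(U_1)) + \dim(\pi(U_2)) - \dim(U_3) = \dim(U_1) + \dim(U_2) - \dim(U_3) \geq 5. \]

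For each $\varphi \in W$, let $\psi_i(\varphi) \in U_i$ denote its unique preimage under $\pi|_{U_i}$. Pick any three linearly independent functionals $\varphi_1, \varphi_2, \varphi_3 \in W$ and apply Lemma~\ref{lemma:ortho_vec} to the triples $w_i := \psi_1(\varphi_i) \in U_1$ and $z_i := \psi_2(\varphi_i) \in U_2$. This produces scalars $\alpha_1, \alpha_2, \alpha_3 \in \Fset$, not all zero, such that the vectors $x_1 := \sum_i \alpha_i w_i \in U_1$ and $x_2 := \sum_i \alpha_i z_i \in U_2$ satisfy $\langle x_1, x_2\rangle = 0$. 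To conclude, I would verify that $\pi(x_1) = \sum_i \alpha_i \varphi_i = \pi(x_2)$, which is nonzero by the linear independence of the $\varphi_i$ and the not-all-zero choice of the $\alpha_i$; consequently $x_1, x_2$ are themselves nonzero by the injectivity of $\pi|_{U_i}$, and the equal values $\pi(x_1) = \pi(x_2)$ are trivially linearly dependent.

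The main obstacle is spotting the right reformulation: once one sees that the dimension inequality amounts to a linear-dependence condition on two elements of the dual space $U_3^*$, the problem reduces to finding an isotropic vector of a quadratic form on $W$, and Lemma~\ref{lemma:ortho_vec}---essentially Chevalley's theorem in three variables---slots in perfectly. The somewhat generous hypothesis $\dim(U_1) + \dim(U_2) - \dim(U_3) \geq 5$ comfortably supplies the three linearly independent $\varphi_i$ needed for that step.
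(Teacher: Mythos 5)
Your proof is correct, and while it uses the same key ingredient (the Chevalley argument of Lemma~\ref{lemma:ortho_vec}) and the same opening case ($U_i\cap U_3^\perp\neq 0$), the dual-space reformulation is a genuine streamlining of the paper's argument. The paper works directly in $\Fset^t$ and bounds $\dim\bigl((U_1+U_2)\cap U_3^\perp\bigr)$, but this count runs through $\dim(U_1+U_2)\ge \dim(U_1)+\dim(U_2)-\dim(U_1\cap U_2)$, which is only useful when $\dim(U_1\cap U_2)\le 2$; so the paper inserts a separate preliminary case $\dim(U_1\cap U_2)\ge 3$, handled by taking $x_1=x_2$ to be a nonzero self-orthogonal vector of $U_1\cap U_2$ (again via Lemma~\ref{lemma:ortho_vec}). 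You instead bound $\dim\bigl(\pi(U_1)\cap\pi(U_2)\bigr)$ inside $U_3^*$, whose dimension is $\dim(U_3)$ regardless of how $U_1$ and $U_2$ overlap, so $\dim\bigl(\pi(U_1)\cap\pi(U_2)\bigr)\ge \dim(U_1)+\dim(U_2)-\dim(U_3)\ge 5$ comes for free once $\pi|_{U_1},\pi|_{U_2}$ are injective, and the extra case evaporates. The two main arguments are essentially two views of the same construction: the paper pairs $w_i\in U_1$ with $z_i\in U_2$ so that $w_i+z_i\in U_3^\perp$, i.e.\ $\pi(w_i)=-\pi(z_i)$, while you pair them so that $\pi(w_i)=\pi(z_i)$, and both conclude that $\pi(x_1)$ and $\pi(x_2)$ are proportional (the paper via $\pi(x_1+x_2)=0$, you via $\pi(x_1)=\pi(x_2)\neq 0$). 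The observation that the target inequality is exactly ``$\pi(x_1),\pi(x_2)$ linearly dependent in $U_3^*$'' is the clarifying move here, and it buys a shorter case analysis and a crisper finish.
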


\begin{proof}
Let $U_1, U_2, U_3 \subseteq \Fset^t$ be three subspaces as in the statement of the lemma.

Assume first that $\dim(U_1 \cap U_2) \geq 3$.
In this case, there exist three linearly independent vectors in $U_1 \cap U_2$.
By Lemma~\ref{lemma:ortho_vec}, there exists a nonzero self-orthogonal linear combination of them.
By choosing $x_1$ and $x_2$ to be this vector, it obviously holds that $\langle x_1, x_2 \rangle = 0$ and that
\[\dim( U_3 \cap (x_1)^\perp \cap (x_2)^\perp ) = \dim( U_3 \cap (x_1)^\perp )\geq \dim(U_3)-1,\]
as required.

Assume next that $\dim(U_1 \cap U_3^\perp) \geq 1$. Here, $x_1$ can be chosen as an arbitrary nonzero vector of $U_1 \cap U_3^\perp$, and $x_2$ as an arbitrary nonzero vector of $U_2$ satisfying $\langle x_1, x_2 \rangle = 0$. Such a vector exists because $\dim(U_2) \geq 2$. By $x_1 \in U_3^\perp$, it follows that
\[\dim( U_3 \cap (x_1)^\perp \cap (x_2)^\perp ) = \dim( U_3 \cap (x_2)^\perp )\geq \dim(U_3)-1.\]
The case $\dim(U_2 \cap U_3^\perp) \geq 1$ is handled similarly.

Otherwise, we have $\dim(U_1 \cap U_2) \leq 2$ and $\dim(U_1 \cap U_3^\perp) = \dim(U_2 \cap U_3^\perp) =0$.
This implies that
\begin{eqnarray*}
\dim(U_1 + U_2) &=& \dim(U_1) + \dim(U_2) - \dim(U_1 \cap U_2)\\
&\geq& \dim(U_1) + \dim(U_2) - 2 \geq \dim(U_3)+3,
\end{eqnarray*}
where for the last inequality we have used the assumption $\dim(U_1) + \dim(U_2) - \dim(U_3) \geq 5$.
It thus follows that
\begin{eqnarray*}
\dim( (U_1 + U_2) \cap U_3^\perp) &=& \dim(U_1 + U_2) + \dim(U_3^\perp) - \dim(U_1 + U_2 + U_3^\perp)\\
&\geq& \dim(U_1 + U_2) + (t- \dim(U_3)) - t \geq 3.
\end{eqnarray*}
Hence, there exist vectors $w_1, w_2, w_3 \in U_1$ and $z_1, z_2, z_3 \in U_2$ for which the three sums
\[w_1 + z_1,~ w_2 + z_2,~ w_3 + z_3\]
are linearly independent vectors that belong to $U_3^\perp$.
By Lemma~\ref{lemma:ortho_vec}, there exist $\alpha_1, \alpha_2, \alpha_3 \in \Fset$, not all zeros, such that the vectors $x_1 = \sum_{i \in [3]}{\alpha_i \cdot w_i}$ and $x_2 = \sum_{i \in [3]}{\alpha_i \cdot z_i}$ satisfy $\langle x_1,x_2 \rangle =0$.
These vectors further satisfy that
\[x_1+x_2  = \sum_{i \in [3]}{\alpha_i \cdot (w_i+z_i)} \in U_3^\perp.\]
It follows that $x_1+x_2$ is nonzero, because the vectors $w_i+z_i$ are linearly independent.
Observe that $x_1$ belongs to $U_1$ and that it is nonzero, because otherwise the vector $x_2$ would be a nonzero vector that belongs to $U_2 \cap U_3^\perp$, in contradiction to $\dim(U_2 \cap U_3^\perp) =0$. By the same reasoning, $x_2$ is a nonzero vector of $U_2$.
Finally, notice that for every vector $u \in U_3$ such that $\langle u, x_1 \rangle =0$, it also holds that $\langle u, x_2 \rangle =0$, and thus
\[\dim( U_3 \cap (x_1)^\perp \cap (x_2)^\perp ) = \dim( U_3 \cap (x_1)^\perp ) \geq \dim(U_3)-1,\]
so we are done.
\end{proof}

\begin{remark}\label{remark:F_2}
It can be shown that for the binary field $\Fset_2$, the third condition of Lemma~\ref{lemma:choice_3sub} can be slightly weakened to $\dim(U_1) + \dim(U_2) - \dim(U_3) \geq 4$.
\end{remark}

We are ready to prove the following result, which implies Theorem~\ref{thm:n-sqrt{n}}.
\begin{theorem}\label{thm:K_n}
For an integer $k \geq 1$, put $n=k^2+2k+3$.
Then, for every finite field $\Fset$,
\[\chv(K_n,\Fset) \leq n - k.\]
\end{theorem}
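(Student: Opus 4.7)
Let $W_{v_1},\ldots,W_{v_n}$ be the assigned subspaces of $\Fset^t$, each of dimension $n-k$. The plan is to select nonzero vectors $x_{v_i}\in W_{v_i}$ two at a time, using Lemma~\ref{lemma:choice_3sub} on early pairs to ``save'' dimensions for later ones. I would arbitrarily pair the vertices as $(v_{2i-1},v_{2i})$ for $i=1,\ldots,m:=\lfloor n/2\rfloor$, and when $n$ is odd process $v_n$ as a singleton at the end. At pair $i$, I would work with $U_1:=W_{v_{2i-1}}\cap\bigcap_{j<2i-1}x_{v_j}^\perp$ and $U_2:=W_{v_{2i}}\cap\bigcap_{j<2i-1}x_{v_j}^\perp$, and pick nonzero orthogonal $x_{v_{2i-1}}\in U_1$ and $x_{v_{2i}}\in U_2$. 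This succeeds whenever one of $U_1,U_2$ has dimension at least $2$ and the other at least $1$: take the forced vector in the smaller subspace and any nonzero vector from the intersection of the larger subspace with its perpendicular.

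Without any intervention, each previously processed pair removes up to two dimensions from every remaining $W_{v_j}$, so at pair $i$ the members have dimension at least $n-k-2(i-1)$, which eventually drops below $2$. To compensate, I would apply Lemma~\ref{lemma:choice_3sub} at some earlier pair with a late $W_{v_j}$ as its $U_3$, yielding a one-dimension saving for $W_{v_j}$. Writing $n-k=k^2+k+3$ and summing the per-pair deficits, the total number of savings required by the late pairs together with the singleton $v_n$ (if present) works out to exactly $k(k+1)/2$. The Lemma~\ref{lemma:choice_3sub} condition $\dim(U_1)+\dim(U_2)-\dim(U_3)\geq 5$ translates, under the generic dimension counts above, into $\tau(i):=n-k-2i-2\geq t$, where $t$ is the ``rank'' of the current saving at its target (i.e., $t-1$ is the number of previous savings to the same target). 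A count shows that the number of pairs $i$ with $\tau(i)\geq 1$ is also $k(k+1)/2$, so the total supply of saving slots matches the total demand.

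The main obstacle is the scheduling: given $k(k+1)/2$ savings of prescribed ranks in $\{1,\ldots,k\}$ and $k(k+1)/2$ slots with given rank capacities, assign each saving to a compatible slot. The demand of rank at least $r$ is $\binom{k-r+2}{2}$, while the supply of slots with $\tau(i)\geq r$ equals $\lfloor(k^2+k+1-r)/2\rfloor$; an elementary calculation shows that the latter dominates the former for every $r\in\{1,\ldots,k\}$, with equality at $r=1$. By Hall's marriage theorem a valid scheduling exists (alternatively, one can greedily place the higher-rank savings into the earliest pairs, noting that $\tau(i)$ is decreasing in $i$). With the savings routed accordingly, executing the pair-by-pair procedure above, and finally picking a nonzero vector from the nonzero-dimensional residue of $W_{v_n}$ when $n$ is odd, produces pairwise orthogonal nonzero vectors $x_{v_1},\ldots,x_{v_n}$, establishing $\chv(K_n,\Fset)\leq n-k$.
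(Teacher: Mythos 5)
Your approach matches the paper's at its core: pair up the vertices and use Lemma~\ref{lemma:choice_3sub} at ``slot'' pairs to save one dimension at a time for ``target'' vertices processed later, and your bookkeeping is correct — total demand $k(k+1)/2$, a matching supply of slots with $\tau(i)\geq 1$, and $\binom{k-r+2}{2}\leq\lfloor(k^2+k+1-r)/2\rfloor$ for all $r\in\{1,\ldots,k\}$. The gap is in the scheduling step. By your own definition the rank $t$ of a saving is one more than the number of \emph{earlier} savings at the same target, so for each target the savings must be carried out in increasing rank order: rank~$1$ at the earliest slot used for that target, rank~$t$ later. Hall's theorem only produces a matching in which each rank-$r$ item lands on a slot with $\tau\geq r$; it does not enforce this precedence, and the two requirements actively conflict — higher rank needs larger $\tau$, hence an earlier slot, yet also must occur chronologically later. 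A stripped-down illustration: three slots with $\tau$-values $5,3,1$ and one target requiring three savings. The supply $3,2,2$ (for ranks $\geq 1,\geq 2,\geq 3$) dominates the demand $3,2,1$, so Hall's condition holds, but the third (rank-$3$) saving would necessarily sit at the last of the three slots, where $\tau=1<3$, so no valid schedule exists. The proposed greedy, ``place the higher-rank savings into the earliest pairs,'' is unsound for the same reason: a rank-$k$ saving is by definition the \emph{last} one performed for its target, so it cannot go first.

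Fortunately a valid schedule does exist, and it is precisely the one the paper builds in explicitly: process targets in \emph{decreasing} order of demand, giving the target of demand $s$ a fresh consecutive block of $s$ slots. Writing $A_s=\sum_{u>s}u=\frac{k(k+1)-s(s+1)}{2}$ for the slots already consumed, the rank-$j$ saving for this target lands at slot $A_s+j$, and $\tau(A_s+j)=s^2+s+1-2j\geq j$ for $1\leq j\leq s$, since $s^2+s+1-3j\geq s^2-2s+1=(s-1)^2\geq 0$. This is exactly the paper's decomposition $B=B_1\cup\cdots\cup B_k$ with $|B_i|=2(k-i+1)$ pairs all saving for $v_i$. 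Substituting this explicit schedule for the Hall/greedy argument closes the gap, at which point your proof and the paper's coincide.
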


\begin{proof}
For an integer $k \geq 1$ and a finite field $\Fset$, consider the complete graph $K_n$ on $n=k^2+2k+3$ vertices.
Let $V = A \cup B \cup C$ be the vertex set of the graph, where $A = \{v_1, \ldots, v_k\}$ is a set of $k$ vertices, $B$ is a set of $k^2+k$ vertices, and $C$ consists of the three remaining vertices.
To prove that $\chv(K_n,\Fset) \leq n - k$, suppose that for some integer $t$, we are given a subspace $U_v \subseteq \Fset^t$ with $\dim(U_v) = n-k$ for every $v \in V$. Our goal is to show that there exist pairwise orthogonal nonzero vectors $x_v \in U_v$ for $v \in V$.
We describe now a process with several steps for choosing the vectors. Throughout the process we maintain for every vertex $v \in V$ a subspace $U'_v$ defined as the subspace of the vectors currently available to the vertex $v$. Namely, for every partial choice of vectors, $U'_v$ is the subspace of $U_v$ that consists of all the vectors of $U_v$ that are orthogonal to all the previously chosen vectors. Initially, we have $U'_v = U_v$ for every $v \in V$.

Consider some partition of the set $B$ into $k$ sets, $B = B_1 \cup \cdots \cup B_k$, where $|B_i| = 2 \cdot (k-i+1)$ for every $i \in [k]$. Note that this is possible, because $|B| = k^2+k = 2 \cdot \sum_{i=1}^{k}({k-i+1)}$.
Our process starts with $k$ initial steps, where the role of the $i$th step ($i \in [k]$) is to choose vectors for the vertices of $B_i$ in a way that poses only $k-i+1$ linear constraints on the choice of the vector for $v_i$. Note that for the other vertices, the choice of the vectors for the vertices of $B_i$ might pose twice this number of linear constraints.

For $i \in [k]$, the $i$th step is performed as follows.
Consider an arbitrary partition of the set $B_i$ into $k-i+1$ pairs, denoted by $(a_1, b_1), \ldots, (a_{k-i+1},b_{k-i+1})$.
For every $j \in [k-i+1]$, we choose two nonzero vectors $u_{a_j} \in U'_{a_j}$ and $u_{b_j} \in U'_{b_j}$ such that $\langle u_{a_j}, u_{b_j} \rangle = 0$ and
\[\dim(U'_{v_i} \cap (u_{a_j})^\perp \cap (u_{b_j})^\perp ) \geq \dim(U'_{v_i})-1.\]
Observe that such a choice, if it exists, satisfies that $u_{a_j}$ and $u_{b_j}$ are nonzero vectors that belong to the subspaces of the vertices $a_j$ and $b_j$ respectively, they are orthogonal to all the previously chosen vectors and to one another, and in addition, their choice reduces the dimension of $U'_{v_i}$ by at most $1$.
To prove the existence of such a choice we apply Lemma~\ref{lemma:choice_3sub}.
The number of vectors chosen before the $(i,j)$ iteration is $\sum_{l=1}^{i-1}{|B_l|}+2 (j-1)$, hence each of $\dim(U'_{a_j})$ and $\dim(U'_{b_j})$ is at least \[(n-k)-\sum_{l=1}^{i-1}{|B_l|}-2(j-1).\]
Additionally, since the $2(j-1)$ already chosen vectors of the $i$th step reduce the dimension of $U'_{v_i}$ by at most $j-1$, it can be assumed that
\[\dim(U'_{v_i}) = (n-k)-\sum_{l=1}^{i-1}{|B_l|}-(j-1).\]
It thus follows that in the $(i,j)$ iteration, it holds that
\begin{eqnarray*}
\dim(U'_{a_j}) + \dim(U'_{b_j}) - \dim(U'_{v_i}) & \geq & (n-k)-\sum_{l=1}^{i-1}{|B_l|}-3(j-1) \\
& = & \sum_{l=i}^{k}{|B_l|} +3 -3(j-1)\\
& = & (k-i+1)(k-i+2) -3j +6\\
&\geq& (k-i+1)(k-i+2) -3(k-i+1)+6\\
& \geq & (k-i+1)(k-i-1) +6 = (k-i)^2 +5 \geq 5,
\end{eqnarray*}
where for the first equality we use the fact that $n = |B|+k+3$, and for the second inequality we use the fact $j \leq k-i+1$.
The above bound, which also implies that $\dim(U_{a_j}) \geq 2$ and that $\dim(U_{b_j}) \geq 2$, allows us to apply Lemma~\ref{lemma:choice_3sub} and to obtain the required vectors $u_{a_j}$ and $u_{b_j}$.

We next show that given the above choice for the vertices of $B$, one can choose vectors for the vertices of $A \cup C$ to obtain the required pairwise orthogonal vectors.
First, for the three vertices of $C$, choose arbitrary pairwise orthogonal nonzero vectors from the currently available subspaces. This is indeed possible, because so far we chose $n-(k+3)$ vectors, so the dimension of the subspace available to each of them is at least $3$. The choice for the first one leaves the available subspaces of the other two with dimension at least $2$, and the choice of the second one leaves the available subspace of the third with dimension at least $1$, allowing us to choose its nonzero vector.

Finally, we choose the vectors for the vertices of $A$.
For each $i \in [k]$, among the $n-k$ vectors chosen so far, there are $k-i+1$ pairs of vectors whose choice reduced the dimension of $U'_{v_i}$ by at most $1$.
This implies that we currently have
\[\dim(U'_{v_i}) \geq (n-k) -((n-k)-(k-i+1)) = k-i+1.\]
This allows us to go over the vertices $v_k, v_{k-1}, \ldots, v_1$, in this order, and to choose a nonzero vector from the subspace currently available to each of them, completing the proof.
\end{proof}

\begin{remark}
For the binary field $\Fset_2$, it can be shown that $\chv(K_n,\Fset_2) \leq n-k$ for $n=k^2+2k+2$.
This follows by applying the above proof with the version of Lemma~\ref{lemma:choice_3sub} mentioned in Remark~\ref{remark:F_2}.
\end{remark}

\section{Characterization of $2$-Subspace Choosable Graphs}\label{sec:2-vec}

In this section we prove Proposition~\ref{prop:chrachterization}, which asserts that for every finite field $\Fset$ and for every graph $G$, $\chv(G,\Fset) \leq 2$ if and only if $G$ contains no cycles.

\begin{proof}[ of Proposition~\ref{prop:chrachterization}]
If $G$ contains no cycles then it is $1$-degenerate, implying that it is $2$-subspace choosable over every field $\Fset$.
To complete the proof, we fix some finite field $\Fset$ and turn to show that for every $\ell \geq 3$, the $\ell$-cycle $C_\ell$ satisfies $\chv(C_\ell,\Fset)>2$.
We first prove it for $\ell=3$ and for $\ell=4$.
\begin{itemize}
  \item For $\ell=3$, assign to the vertices of the cycle $C_3$ the subspaces of $\Fset^3$ defined by
  \[U_1 = \linspan(e_1,e_2),~~~U_2 = \linspan(e_1,e_2+e_3), \mbox{~~~and~~~}U_3 = \linspan(e_1+\alpha \cdot e_3,e_2),\]
  where $\alpha \in \Fset$ is some field element to be determined.
  We claim that for some $\alpha \in \Fset$ it is impossible to choose three pairwise orthogonal nonzero vectors $x_i \in U_i$ ($i \in [3]$).
  Indeed, it is easy to verify that $x_1$ cannot be chosen as a scalar multiple of $e_1$ nor of $e_2$. So assume without loss of generality that $x_1$ is proportional to $e_1 + z \cdot e_2$ for some $z \neq 0$. If $x_1$ is orthogonal to $x_2$ and to $x_3$, then $x_2$ is proportional to $z \cdot e_1-e_2-e_3$ and $x_3$ is proportional to $z \cdot e_1+\alpha z \cdot e_3-e_2$. However, the inner product of the latter two is $z^2-\alpha \cdot z+1$, so it suffices to show that there exists $\alpha \in \Fset$ for which this quadratic polynomial has no root.
  Notice that in case that $z^2-\alpha \cdot z+1$ has a root, it can be written as $(z-\gamma)\cdot (z-\gamma^{-1})$ for some $\gamma \neq 0$. Since the number of possible values of $\alpha$ is larger than the number of possible invertible values of $\gamma$, it follows that the required $\alpha$ exists.

  \item For $\ell=4$, suppose first that the field $\Fset$ is of characteristic larger than $2$, and assign to the vertices along the cycle $C_4$ the subspaces of $\Fset^4$ defined by
  \[U_1 = U_2 = \linspan(e_1,e_2),~~~U_3 = \linspan(e_1+e_4,e_2+e_3), \mbox{~~~and~~~}U_4 = \linspan(e_1+\alpha \cdot e_3,e_2+e_4),\]
  where $\alpha \in \Fset$ is some nonzero field element to be determined.
  We claim that for some $\alpha \neq 0$ it is impossible to choose four nonzero vectors $x_i \in U_i$ ($i \in [4]$) that form a valid choice for $C_4$.
  By $\alpha \neq 0$, it is easy to verify, as before, that $x_1$ cannot be chosen as a scalar multiple of $e_1$ nor of $e_2$, so it can be assumed that it is proportional to $e_1 + z \cdot e_2$ for some $z \neq 0$. If the vectors $x_i$ form a valid choice for $C_4$, then $x_2$ is proportional to $z \cdot e_1-e_2$, thus $x_3$ is proportional to $e_1+e_4+z \cdot e_2+ z \cdot e_3$, and $x_4$ is proportional to $z \cdot e_1+\alpha z \cdot e_3-e_2-e_4$.
  However, the inner product of the latter two is $\alpha \cdot z^2-1$, so it suffices to show that there exists $\alpha \neq 0$ for which $\alpha \cdot z^2 \neq 1$ for all values of $z$. Since $\Fset$ is of characteristic larger than $2$, it has a non-square element, whose choice for $\alpha$ completes the argument.

  If, however, $\Fset$ is of characteristic $2$, one can consider the subspaces of $\Fset^5$ defined by
  $U_1 = U_2 =\linspan(e_1,e_2)$, $U_3 = \linspan(e_1+e_4,e_2+e_3+e_5)$, $U_4 = \linspan(e_1+ e_3,e_2+ \alpha \cdot e_4+e_5)$,
  where $\alpha \in \Fset$ is some nonzero field element for which $z^2+z \neq \alpha$ for all values of $z$. Notice that such an $\alpha$ exists because the function $z \mapsto z^2+z$ maps both $0$ and $1$ to $0$, so some nonzero element does not belong to its image.
  It can be verified that for the above subspace assignment, no valid choice of vectors for $C_4$ exists.
\end{itemize}
Finally, observe that for every odd $\ell > 3$, one can extend the above subspace assignment for $C_3$ by adding $\ell-3$ copies of the subspace $\linspan(e_1,e_2)$ between $U_1$ and $U_2$ to get a subspace assignment showing that $\chv(C_\ell,\Fset)>2$. Similarly, for every even $\ell > 4$, one can extend the above subspace assignment for $C_4$ by adding $\ell-4$ copies of the subspace $\linspan(e_1,e_2)$ between $U_1$ and $U_2$.
\end{proof}

\begin{remark}\label{remark:char_R}
As shown in~\cite{HaynesPSWM10}, the characterization given in Proposition~\ref{prop:chrachterization} for finite fields holds for the real field $\R$ too.
In particular, for every integer $\ell \geq 3$, it holds that $\chv(C_\ell,\R)>2$.
For an odd $\ell$, this simply follows by assigning $\R^2$ to every vertex.
For an even $\ell$, this follows from the construction given above in the proof for fields of characteristic larger than $2$, taking $\alpha$ to be some non-square over $\R$.
\end{remark}

\section{Subspace Choosability in Complete Bipartite Graphs}\label{sec:complete_bipartite}

In this section we prove our results on subspace choosability in complete bipartite graphs.

\subsection{Complete Balanced Bipartite Graphs}
Erd\"{o}s et al.~\cite{ERT79} proved that the choice number of the complete balanced bipartite graph $K_{m,m}$ exceeds $k$ for $m = \binom{2k-1}{k}$.
We provide here a quick proof for an analogue result for subspace choosability.
Note, however, that when the number of vertices is sufficiently large, the lower bound given by Theorem~\ref{thm:degreeIntro} is significantly better.
\begin{proposition}\label{prop:2k-1_k}
For every integer $k$ and for every field $\Fset$, $\chv(K_{m,m},\Fset) > k$ for $m = \binom{2k-1}{k}$.
\end{proposition}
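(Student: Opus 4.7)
The plan is to adapt the classical Erd\"os--Rubin--Taylor list construction to the vector setting by replacing lists with coordinate subspaces. Set $N = 2k-1$, identify each side of $K_{m,m}$ with the collection $\binom{[N]}{k}$, and assign to the vertex indexed by a $k$-subset $S$ the subspace $W_S = \linspan\{e_i : i \in S\} \subseteq \Fset^N$, which has dimension exactly $k$. My task will then be to show that no choice of nonzero $x_S \in W_S$ for left vertices and $y_T \in W_T$ for right vertices can satisfy $\langle x_S, y_T \rangle = 0$ for every pair $S, T$.

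Suppose such a choice exists. I will stack the chosen vectors into two $m \times N$ matrices $A$ and $B$, so that the orthogonality condition becomes $A B^T = 0$. (Over $\C$ one replaces $B$ by its entry-wise conjugate, which preserves both supports and dimension, leaving the argument below intact.) Since the bilinear form $\sum x_i y_i$ is non-degenerate on $\Fset^N$, this forces
\[
\dim(\mathrm{row}(A)) + \dim(\mathrm{row}(B)) \leq N = 2k-1.
\]
I will then derive a contradiction by proving that each row-space dimension is at least $k$.

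The key lemma I plan to establish is: if a subspace $U \subseteq \Fset^N$ satisfies $U \cap \Fset^S \neq \{0\}$ for every $k$-subset $S \subseteq [N]$, then $\dim(U) \geq k$. The approach is to reformulate the hypothesis via the coordinate projection $\pi_Q : U \to \Fset^Q$ for the complementary $(k-1)$-subset $Q = [N] \setminus S$: the existence of a nonzero element of $U$ supported on $S$ is equivalent to $\pi_Q$ having nontrivial kernel, which is equivalent to any $k-1$ columns of a basis matrix $M$ of $U$ having rank at most $d-1$, where $d = \dim(U)$. Assuming for contradiction that $d \leq k-1$, I split into two cases. If $d \leq k-2$, I extend a basis of the column span of $M$ (of size $d$) by arbitrary further columns to a $(k-1)$-subset; this set has rank $\geq d$, violating the $\leq d - 1$ bound. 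If $d = k - 1$, the projection condition says that every $(k-1)$-subset of columns is linearly dependent, contradicting the fact that $M$ has rank $k - 1$ and hence admits some $k-1$ linearly independent columns.

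Applying this lemma to $U = \mathrm{row}(A)$ and $U = \mathrm{row}(B)$ yields $\dim(\mathrm{row}(A)) + \dim(\mathrm{row}(B)) \geq 2k$, contradicting the earlier bound $\leq 2k - 1$. The one delicate point I foresee is the projection-to-column-rank dictionary and the case split on $d$; the case $d = k-1$ is precisely where the choice $N = 2k-1$ is used sharply, via the tension between ``every $(k-1)$-subset of columns dependent'' and ``$M$ has rank $k-1$.'' The rest is bookkeeping.
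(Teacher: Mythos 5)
Your proof is correct and follows essentially the same approach as the paper: both use the coordinate-subspace assignment indexed by $k$-subsets of $[2k-1]$, both derive $\dim(\mathrm{row}(A)) + \dim(\mathrm{row}(B)) \leq 2k-1$ from the non-degeneracy of the bilinear form, and both reach a contradiction by arguing that a subspace of dimension at most $k-1$ must avoid some $k$-coordinate subspace. The only cosmetic difference is that the paper locates the bad $k$-subset directly via Gauss elimination on a basis of the low-dimensional row space, whereas you phrase the same fact as a standalone lemma about projections and column ranks with a (strictly unnecessary) case split on $d \leq k-2$ versus $d = k-1$.
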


\begin{proof}
Let $k$ be an integer and let $\Fset$ be a field.
Consider the graph $K_{m,m}$ for $m = \binom{2k-1}{k}$, and associate with the vertices of every side of the graph all the $k$-subsets of $[2k-1]$.
For a vertex associated with a $k$-subset $A$ of $[2k-1]$ we assign the $k$-subspace of $\Fset^{2k-1}$ spanned by the vectors $e_i$ with $i \in A$, where $e_i$ stands for the vector of $\Fset^{2k-1}$ with $1$ on the $i$th entry and $0$ everywhere else.
We claim that there is no choice of nonzero vectors from these subspaces such that the vectors of the left side are orthogonal to those of the right side. To see this, suppose in contradiction that such a choice exists, and denote by $x_1, \ldots, x_m$ and $y_1, \ldots, y_m$ the vectors chosen for the vertices of the left and right sides respectively. Letting $U = \linspan(x_1,\ldots,x_m)$ and $V = \linspan(y_1,\ldots,y_m)$, it follows that $V \subseteq U^\perp$, and thus
\[ \dim(U) + \dim(V) \leq \dim(U)+\dim(U^\perp) = 2k-1,\]
implying that at least one of $U$ and $V$ has dimension at most $k-1$.
Without loss of generality, assume that $\dim(U) \leq k-1$.
Put $\ell = \dim(U)$, fix some $\ell$ vectors from $x_1, \ldots, x_m$ that span $U$, and consider the $(2k-1) \times \ell$ matrix whose columns are these vectors.
Since the dimension of the subspace spanned by the rows of $U$ is also $\ell$, it follows that there exists a set $B \subseteq [2k-1]$ of $\ell$ indices whose rows are linearly independent.
It follows that the only vector in $U$ with zeros in all entries of $B$ is the zero vector.
However, by $|B| = \ell \leq k-1$, there exists a $k$-subset $A$ of $[2k-1]$ disjoint from $B$, so the vertex associated with this $A$ in the left side of the graph cannot receive any nonzero vector of $U$. This gives us the required contradiction and completes the proof.
\end{proof}

\subsection{Asymmetric Subspace Choosability in Complete Bipartite Graphs}

We consider now complete bipartite graphs in the asymmetric setting, where the dimensions of the subspaces assigned to the vertices of the right and left sides might be different.

\begin{definition}
The complete bipartite graph $K_{\ell_1,\ell_2}$ with the vertex set $A$ of size $\ell_1$ on the left side and the vertex set $B$ of size $\ell_2$ on the right side is said to be {\em $(k_1,k_2)$-subspace choosable} over a field $\Fset$ if it is $f$-subspace choosable over $\Fset$ for the function $f: A \cup B \rightarrow \{k_1,k_2\}$ defined by $f(u) = k_1$ for every $u \in A$ and $f(u) = k_2$ for every $u \in B$.
\end{definition}
\noindent
In what follows, we provide several conditions that imply subspace choosability and subspace non-choosability in complete bipartite graphs, and in particular prove Theorems~\ref{thm:Lower},~\ref{thm:GeneralUpper}, and~\ref{thm:Intro_K_2,m}.

\subsubsection{Upper Bounds}

We start with the following simple statement.
\begin{proposition}\label{prop:as_k_1_k_2}
For every field $\Fset$, the graph $K_{\ell_1,\ell_2}$ is $(k_1,k_2)$-subspace choosable over $\Fset$ whenever $\ell_1 < k_2$ or $\ell_2 < k_1$.
\end{proposition}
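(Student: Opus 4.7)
The plan is to use the fact that one of the two sides of the bipartite graph has strictly fewer vertices than the dimension of the subspaces assigned to the other side, which leaves room to satisfy the orthogonality constraints after picking vectors for the smaller side arbitrarily.

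By symmetry, I will assume without loss of generality that $\ell_1 < k_2$; the case $\ell_2 < k_1$ follows by swapping the roles of the two sides. Fix an arbitrary $(k_1, k_2)$-subspace assignment $\{W_u\}_{u \in A \cup B}$ over $\Fset$, where $\dim(W_u) = k_1$ for $u \in A$ and $\dim(W_v) = k_2$ for $v \in B$. First, for every $u \in A$ pick an arbitrary nonzero vector $x_u \in W_u$ (this is possible because $k_1 \geq 1$, as otherwise the claim is vacuous).

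Now, for each $v \in B$, I need to find a nonzero vector $x_v \in W_v$ orthogonal to all the vectors $\{x_u : u \in A\}$. Being orthogonal to each $x_u$ imposes a single linear constraint on $W_v$, so the subspace
\[ W_v \cap \bigcap_{u \in A} (x_u)^\perp \]
has dimension at least $k_2 - \ell_1$. Since $\ell_1 < k_2$ by assumption, this dimension is at least $1$, so a nonzero vector $x_v$ can be selected from it. The resulting assignment of nonzero vectors has the property that each pair of adjacent vertices (one in $A$, one in $B$) is orthogonal, so $K_{\ell_1, \ell_2}$ is indeed $(k_1, k_2)$-vector choosable over $\Fset$.

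There is essentially no obstacle here: the statement is a direct linear-algebraic fact, and the key observation is that the side with fewer vertices contributes fewer orthogonality constraints than the codimension afforded by the other side's subspaces. This proposition is likely included only to serve as a baseline against which the more interesting bounds on $m(k,\Fset)$ and on $(n;2,2)$-vector choosability are compared.
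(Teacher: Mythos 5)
Your proof is correct and matches the paper's argument essentially verbatim: pick arbitrary nonzero vectors for the smaller side, then observe these impose at most $\ell_1 < k_2$ linear constraints on each $k_2$-dimensional subspace, leaving a nonzero vector to choose.
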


\begin{proof}
Suppose that $\ell_1 < k_2$, and let $U_1, \ldots, U_{\ell_1}$ and $V_1, \ldots, V_{\ell_2}$ be $k_1$-subspaces and $k_2$-subspaces, respectively, of $\Fset^t$ for some integer $t$. Choose an arbitrary nonzero vector from each $U_i$ for $i \in [\ell_1]$. Such a choice poses at most $\ell_1$ linear constraints on the choice of a vector from each $V_j$, and since the dimension of those subspaces is $k_2 > \ell_1$, a nonzero choice exists, resulting in a valid choice for the whole graph. By symmetry, the result holds for the case $\ell_2 < k_1$ as well.
\end{proof}

We next prove the following result, which confirms Theorem~\ref{thm:Lower}.

\begin{theorem}\label{thm:LowerAs}
For every two integers $k$ and $n$ and for every field $\Fset$, the graph $K_{k,m}$ is $(n,k)$-subspace choosable over $\Fset$ for $m = \sum_{i=0}^{k-1}{ \lfloor \frac{n-1}{k-i} \rfloor}$.
\end{theorem}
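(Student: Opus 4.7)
The plan is to prove the theorem by induction on $k$, with $n$ an arbitrary parameter. The base case $k=1$ gives $m = n-1$: pick any nonzero generator $y_j$ of each one-dimensional $W_j$, then choose $x_1$ from $U_1 \cap \{y_1,\ldots,y_{n-1}\}^\perp$, which has dimension at least $n - (n-1) = 1$.

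For the inductive step, set $a = \lfloor (n-1)/k \rfloor$ and note that $m = a + m'$ where $m' = \sum_{i=0}^{k-2} \lfloor (n-1)/(k-1-i) \rfloor$ is precisely the bound given by the theorem for parameter $k-1$. Given any $(n,k)$-subspace assignment on $K_{k,m}$, arbitrarily designate a subset $B$ of $a$ right vertices as the \emph{absorbed batch}. I would first pick a nonzero $x_1 \in U_1 \cap \bigcap_{j \in B} W_j^\perp$, possible because this intersection has dimension at least $n - ak \geq n - (n-1) = 1$; the key consequence is that $x_1$ is then orthogonal to every vector of $W_j$ for each $j \in B$, allowing the choice of these $y_j$'s to be postponed. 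Next, for each $j \notin B$, set $W_j^{(1)} = W_j \cap x_1^\perp$ (dimension at least $k-1$) and fix an arbitrary $(k-1)$-dimensional subspace $W_j' \subseteq W_j^{(1)}$. Applying the induction hypothesis to the graph $K_{k-1,m'}$ with left subspaces $U_2,\ldots,U_k$ (still $n$-dimensional) and right subspaces $\{W_j'\}_{j \notin B}$ produces nonzero $x_i \in U_i$ for $i \geq 2$ and nonzero $y_j \in W_j'$ for $j \notin B$ satisfying all required orthogonalities; the inclusion $W_j' \subseteq x_1^\perp$ also gives $\langle x_1, y_j \rangle = 0$ for free.

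Finally, for each $j \in B$, I would pick a nonzero $y_j$ from $W_j \cap \{x_2,\ldots,x_k\}^\perp$, a space of dimension at least $k - (k-1) = 1$; orthogonality with $x_1$ is automatic from the construction of $x_1$. The main obstacle, and essentially the only nontrivial dimension estimate, is the existence of $x_1$ in the first step, which exploits the inequality $ak \leq n-1$ built into the definition of $a$. Everything else is routine dimension counting, field-independent by nature, so the proof applies uniformly over every field $\Fset$.
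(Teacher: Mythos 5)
Your proof is correct. You take a genuinely different route from the paper, though the core combinatorial idea (a batch of size $\lfloor (n-1)/(k-i+1)\rfloor$ is ``handled'' by the $i$-th left vertex) is the same. The paper runs a single forward greedy pass over $u_1,\dots,u_k$: it maintains auxiliary subspaces $L_j = V_j^\perp + \linspan(\text{chosen } x_{i'})$ for every right vertex $v_j$, chooses $x_i$ inside $\bigcap_{j\in J''} L_j$ via an intersection lemma (Lemma~\ref{lemma:intersection}), and finishes with a separate argument showing that $\dim(L_j)<t$ at the end, so $y_j$ can be drawn from $L_j^\perp \subseteq V_j$. Your inductive reformulation removes all of this bookkeeping: you absorb the first batch $B$ by picking $x_1\in U_1\cap\bigcap_{j\in B}W_j^\perp$ (a single codimension count using $a k \le n-1$), then deliberately shrink every remaining right subspace to an arbitrary $(k-1)$-dimensional $W_j'\subseteq W_j\cap x_1^\perp$ and invoke the inductive hypothesis for $K_{k-1,m'}$, finishing $B$ afterwards by one more routine count. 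What the paper's version buys is a more explicit accounting of how each $y_j$ escapes the constraints posed by all $k$ of the $x_i$'s; what yours buys is that the intersection lemma and the terminal $\dim(L_j)<t$ argument both disappear, replaced by a self-evident recursion, and the field-independence is manifest throughout. Both proofs are sound; yours is arguably the cleaner exposition.
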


We need the following lemma.

\begin{lemma}\label{lemma:intersection}
Let $W$ be a $k$-subspace of some finite-dimensional vector space over a field $\Fset$, let $W_1, \ldots, W_t$ be $r$-subspaces of $W$, and suppose that $t \leq \frac{k-1}{k-r}$. Then, there exists a nonzero vector in the intersection $\bigcap_{i \in [t]}{W_i}$.
\end{lemma}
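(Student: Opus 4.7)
The plan is to bound the dimension of the intersection $\bigcap_{i \in [t]} W_i$ from below by iteratively applying the standard dimension formula, and then to observe that the hypothesis on $t$ is exactly what makes this lower bound positive.

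First I would handle the trivial case $k = r$ (where the right-hand side $\frac{k-1}{k-r}$ is formally undefined). In this situation each $W_i$ equals $W$, so the intersection is $W$ itself, which contains nonzero vectors whenever $k \geq 1$.

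For the main case $r < k$, the key identity is that for any two subspaces $U, V \subseteq W$,
\[
\dim(U \cap V) \;\geq\; \dim(U) + \dim(V) - \dim(W) \;=\; \dim(U) + \dim(V) - k,
\]
which follows from $\dim(U+V) \leq k$ and the inclusion-exclusion-style identity $\dim(U+V) = \dim(U) + \dim(V) - \dim(U \cap V)$. Using this repeatedly, I would prove by induction on $s \in [t]$ that
\[
\dim\Big(\bigcap_{i \in [s]} W_i\Big) \;\geq\; k - s \cdot (k-r).
\]
The base case $s=1$ is just $\dim(W_1) = r = k - (k-r)$, and the inductive step applies the displayed inequality with $U = \bigcap_{i \in [s-1]} W_i$ and $V = W_s$, losing at most $k - r$ in dimension at each stage.

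Finally, plugging $s = t$ and invoking the assumption $t \leq \frac{k-1}{k-r}$, which rearranges (since $k-r > 0$) to $t(k-r) \leq k-1$, yields
\[
\dim\Big(\bigcap_{i \in [t]} W_i\Big) \;\geq\; k - t(k-r) \;\geq\; k - (k-1) \;=\; 1,
\]
so the intersection contains a nonzero vector. I do not anticipate any real obstacle here: the statement is essentially a clean repackaging of the dimension formula, and the bound in the hypothesis is tailored so that the inductive loss of $k-r$ dimensions per subspace still leaves at least one dimension after $t$ steps.
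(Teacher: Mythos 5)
Your argument is essentially identical to the paper's: both iterate the dimension formula $\dim(U\cap V)\geq \dim U+\dim V-\dim W$ to obtain $\dim\bigl(\bigcap_{i\in[t]}W_i\bigr)\geq tr-(t-1)k=k-t(k-r)\geq 1$. The only (harmless) difference is that you explicitly split off the degenerate case $r=k$, which the paper leaves implicit.
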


\begin{proof}
Using the standard equality $\dim(V_1 \cap V_2) = \dim(V_1)+\dim(V_2)-\dim(V_1+V_2)$, observe that
\begin{eqnarray*}
\dim \Big (\bigcap_{i \in [t]}{W_i} \Big ) &=& \dim(W_1)+ \dim \Big (\bigcap_{i \geq 2}{W_i} \Big ) - \dim \Big (W_1+\bigcap_{i \geq 2}{W_i} \Big )
\\ & \geq & \dim(W_1)+ \dim \Big (\bigcap_{i \geq 2}{W_i} \Big ) - \dim(W).
\end{eqnarray*}
By repeatedly applying this inequality we obtain that
\[ \dim \Big ( \bigcap_{i \in [t]}{W_i} \Big ) \geq \sum_{i \in [t]}{\dim(W_i)} - (t-1) \cdot \dim(W) = t \cdot r - (t-1) \cdot k \geq 1,\]
hence there exists a nonzero vector in $\bigcap_{i \in [t]}{W_i}$, as required.
\end{proof}

We are ready to prove Theorem~\ref{thm:LowerAs}.

\begin{proof}[ of Theorem~\ref{thm:LowerAs}]
For integers $k$ and $n$, put $m = \sum_{i=0}^{k-1}{ \lfloor \frac{n-1}{k-i} \rfloor}$.
We show that the graph $K_{k,m}$ is $(n,k)$-subspace choosable over every field $\Fset$.
Denote the left and right vertices of the graph by $u_1,\ldots,u_k$ and $v_1,\ldots, v_m$ respectively, and consider an arbitrary assignment of $n$-subspaces and $k$-subspaces of $\Fset^t$ to the left and right vertices, respectively, for some integer $t$.
For every $i \in [k]$ let $U_i$ be the subspace assigned to $u_i$, and for every $j \in [m]$ let $V_j$ be the subspace assigned to $v_j$.
We will show that it is possible to choose nonzero vectors from these subspaces such that the vectors of the left side are orthogonal over $\Fset$ to those of the right side.

We first describe how the vectors $x_1, \ldots, x_k$ of the left vertices $u_1, \ldots, u_k$ are chosen.
We choose them one by one, and to do so we maintain a set $J \subseteq [m]$ and some subspaces $L_1, \ldots, L_m$ of $\Fset^t$.
Initially, we define $J = [m]$ and $L_j = V_j^\perp$ for all $j \in [m]$. Note that $\dim(L_j)=t-k$.
Then, for every $i \in [k]$ we act as follows.
\begin{itemize}
  \item Pick some set $J' \subseteq J$ of size $|J'| = \lfloor \frac{n-1}{k-(i-1)} \rfloor$.
  \item Let $J'' \subseteq J'$ be the set of indices $j \in J'$ satisfying $\dim(L_j) = t-k+(i-1)$.
  \item Choose $x_i$ to be some nonzero vector of $U_i$ that belongs to the intersection $\bigcap_{j \in J''} L_j$.
  \item Add the vector $x_i$ to every subspace $L_j$, that is, update every subspace $L_j$ to be the subspace $L_j + \linspan(x_i)$.
  \item Remove the elements of $J'$ from $J$.
\end{itemize}
Observe that the number of elements removed from $J$ during the above $k$ iterations is $\sum_{i=1}^{k}{ \lfloor \frac{n-1}{k-(i-1)} \rfloor}$. Since the latter coincides with our definition of $m$, it follows that after the $k$th iteration the set $J$ is empty.

We show now that the vectors $x_i$ are well defined, in the sense that in the $i$th iteration there exists a nonzero vector that belongs to $U_i$ and to the intersection $\bigcap_{j \in J''} L_j$. To see this, put $W = U_i$ and consider its subspaces $W_j = L_j \cap W$ for $j \in J''$.
By the definition of $J''$, for every $j \in J''$ it holds that $\dim(L_j) = t-k+(i-1)$, hence, using $\dim(W)=n$, it follows that
\begin{eqnarray*}
\dim(W_j) &=& \dim(L_j)+\dim(W)-\dim(L_j+W) \\
&\geq& t-k+(i-1) + n-t=n-k+i-1.
\end{eqnarray*}
By Lemma~\ref{lemma:intersection} applied to $W$ and to its subspaces $W_j$, using the fact that $|J''| \leq \lfloor \frac{n-1}{k-(i-1)} \rfloor$, the required vector $x_i$ is guaranteed to exist.

We finally show that the above choice of vectors for the left vertices can be extended to a valid choice of vectors for the whole graph.
Fix some $j \in [m]$ and observe that if the subspace $L_j$ obtained at the end of the $k$th iteration has dimension strictly smaller than $t$ then it is possible to choose an appropriate vector $y_j$ for the vertex $v_j$. Indeed, $y_j$ can be chosen as any nonzero vector orthogonal to this $L_j$, because such a vector is orthogonal to $V_j^\perp$, hence belongs to $V_j$, and is orthogonal to all the vectors $x_1, \ldots, x_k$ that were chosen for the left vertices and were added to $L_j$ during the $k$ iterations. Since the initial dimension of $L_j$ is $t-k$ it suffices to show that in at least one of the $k$ iterations, the chosen vector $x_i$ was already inside $L_j$. So suppose that the set $J'$ includes $j$ in the $i$th iteration. If $j \in J''$ then the vector $x_i$ chosen in this iteration belongs to the current $L_j$. Otherwise, the dimension of $L_j$ in this iteration is smaller than $t-k+(i-1)$, implying that in one of the previous $i-1$ iterations a vector that already belongs to $L_j$ was chosen, so we are done.
\end{proof}

\subsubsection{Lower Bounds}

We start with the following simple statement.
\begin{proposition}\label{prop:basic_lower_bound}
For every two integers $n,k \geq 2$ and for every field $\Fset$, the graph $K_{k,n^k}$ is not $(n,k)$-subspace choosable over $\Fset$.
\end{proposition}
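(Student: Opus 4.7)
The plan is to construct an explicit $(n,k)$-subspace assignment to $K_{k,n^k}$ that admits no valid choice of nonzero vectors. The construction is a standard ``coordinate gadget'' enumeration, engineered so that every possible choice of nonzero vectors on the left is refuted by some specific right vertex.

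Concretely, I would work in the ambient space $\Fset^{kn}$, viewed as a direct sum of $k$ disjoint coordinate blocks of $n$ coordinates each. For $i \in [k]$ and $j \in [n]$, let $e_{i,j}$ denote the standard basis vector of $\Fset^{kn}$ supported at the $j$-th coordinate of the $i$-th block. To each left vertex $u_i$ I would assign the $n$-subspace $U_i = \linspan(e_{i,1}, \ldots, e_{i,n})$, and to each of the $n^k$ right vertices, indexed by a tuple $\mathbf{j} = (j_1, \ldots, j_k) \in [n]^k$, I would assign the $k$-subspace
\[ V_{\mathbf{j}} = \linspan(e_{1,j_1}, e_{2,j_2}, \ldots, e_{k,j_k}), \]
which has dimension exactly $k$ since the chosen basis vectors lie in distinct blocks.

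The core argument is then as follows. Fix any nonzero $x_i \in U_i$ for each $i \in [k]$, and write $x_i = \sum_{j \in [n]} c_{i,j} \, e_{i,j}$. Since $x_i \neq 0$, there exists an index $j_i^\star \in [n]$ with $c_{i,j_i^\star} \neq 0$. Consider the right vertex indexed by $(j_1^\star, \ldots, j_k^\star)$ and any candidate vector $y = \sum_{i \in [k]} \beta_i \, e_{i,j_i^\star}$ in $V_{(j_1^\star, \ldots, j_k^\star)}$. Because $x_i$ is supported only in block $i$ while $y$ has at most one coordinate in each block, namely at position $(i, j_i^\star)$, the inner product $\langle x_i, y \rangle$ equals $c_{i,j_i^\star} \cdot \beta_i$ over $\R$ or any finite field, and $c_{i,j_i^\star} \cdot \overline{\beta_i}$ over $\C$. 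In either case, orthogonality forces $\beta_i = 0$ since $c_{i,j_i^\star} \neq 0$, and as this must hold for every $i$, any candidate $y$ must vanish, contradicting the requirement that the chosen vector be nonzero.

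There is essentially no obstacle here; the statement is of the ``easy direction'' of an extremal result, and the construction is a direct coordinate enumeration. The only small subtlety is verifying that the argument is uniform over the three families of fields covered by the paper, which amounts to noting that the complex conjugation in the Hermitian inner product does not affect the vanishing of a product of two scalars.
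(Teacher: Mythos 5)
Your proof is correct and is essentially identical to the paper's: the same ambient space $\Fset^{kn}$ partitioned into $k$ blocks, the same $n$-subspaces for the left vertices, the same $k$-subspaces indexed by $[n]^k$ for the right vertices, and the same support argument showing any nonzero choice on the left is refuted by one of the right vertices.
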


\begin{proof}
Denote the vertices of the left side of $K_{k,n^k}$ by $u_1,\ldots,u_k$.
For every $i \in [k]$, assign to the vertex $u_i$ the $n$-subspace of $\Fset^{n k}$ spanned by the vectors $e_i$ with $i \in [(i-1) \cdot n+1, i \cdot n]$, where $e_i$ stands for the vector in $\Fset^{n k}$ with $1$ on the $i$th entry and $0$ everywhere else.
Then, associate with each of the $n^k$ vertices of the right side a distinct $k$-tuple $(a_1,\ldots,a_k) \in [n]^k$, and assign to it the $k$-subspace of $\Fset^{n k}$ spanned by the vectors $e_{(i-1)n+a_i}$ for $i \in [k]$.

We claim that there is no choice of nonzero vectors from these subspaces such that the vectors of the left side are orthogonal over $\Fset$ to those of the right side.
To see this, consider any choice of a nonzero vector $x_i$ for each vertex $u_i$ for $i \in [k]$. For every $i \in [k]$, consider the restriction $\widetilde{x}_i \in \Fset^n$ of the vector $x_i$ to the support of its subspace, that is, to the entries with indices in $[(i-1) \cdot n+1, i \cdot n]$. Since $x_i$ is nonzero, it follows that there exists some $a_i \in [n]$ such that the vector $\widetilde{x}_i$ is nonzero in its $a_i$th entry. However, the only vector in the subspace of the vertex $(a_1, \ldots, a_k)$ of the right side which is orthogonal to all the vectors $x_i$ ($i \in [k]$) is the zero vector. This implies that no choice of nonzero vectors for the left side can be extended to a valid choice of vectors for the whole graph, so we are done.
\end{proof}

We next prove the following result.

\begin{theorem}\label{thm:GeneralUpper_as}
For every integers $n$, $t$, $k$ and for every field $\Fset$, the following holds.
If there exists a collection of $m = k \cdot (t-1) +1$ nonzero vectors in $\Fset^n$ satisfying that every $t$ of them span the entire space $\Fset^{n}$, then the graph $K_{k,m}$ is not $(n,k)$-subspace choosable over $\Fset$.
\end{theorem}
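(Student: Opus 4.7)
The plan is to construct an explicit $(n,k)$-subspace assignment for $K_{k,m}$ inside the ambient space $\Fset^{nk}$ that admits no valid choice, generalizing the construction used in Proposition~\ref{prop:basic_lower_bound}. Partition the coordinates of $\Fset^{nk}$ into $k$ blocks of length $n$. To each left vertex $u_i$ ($i \in [k]$), I would assign the $n$-subspace $U_i$ consisting of all vectors supported on the $i$th block, so that $U_i$ is naturally a copy of $\Fset^n$. Let $v_1, \ldots, v_m$ denote the hypothesized collection of nonzero vectors in $\Fset^n$. For each right vertex indexed by $j \in [m]$, and each $i \in [k]$, let $v_j^{(i)} \in \Fset^{nk}$ be the vector that has $v_j$ in the $i$th block and zeros elsewhere. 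Assign to the $j$th right vertex the subspace $W_j = \linspan(v_j^{(1)}, \ldots, v_j^{(k)})$; since the $v_j^{(i)}$ have pairwise disjoint supports and $v_j \neq 0$, they are linearly independent, so $\dim(W_j) = k$ as required.

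Next I would analyze any putative valid choice. Write $x_i \in U_i$ as a nonzero $\tilde{x}_i \in \Fset^n$ sitting in the $i$th block, and write $y_j = \sum_{i=1}^{k} \alpha_{j,i}\, v_j^{(i)}$ for some scalars $\alpha_{j,i} \in \Fset$ not all zero (since $y_j$ is nonzero). Because the supports of distinct blocks do not overlap, one computes
\[
\langle x_i, y_j \rangle \;=\; \alpha_{j,i} \cdot \langle \tilde{x}_i, v_j \rangle.
\]
Orthogonality therefore demands that for every pair $(i,j)$, either $\alpha_{j,i} = 0$ or $\langle \tilde{x}_i, v_j \rangle = 0$. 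Since for each $j$ some $\alpha_{j,i}$ is nonzero, this yields a well-defined function $\sigma : [m] \to [k]$ with $\langle \tilde{x}_{\sigma(j)}, v_j \rangle = 0$ for every $j \in [m]$.

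The conclusion is then immediate by pigeonhole: since $m = k(t-1)+1$, there exists $i^* \in [k]$ for which $\sigma^{-1}(i^*)$ contains at least $t$ indices $j_1, \ldots, j_t$. The vector $\tilde{x}_{i^*} \in \Fset^n$ is orthogonal to each of $v_{j_1}, \ldots, v_{j_t}$, but by hypothesis these $t$ vectors span all of $\Fset^n$, forcing $\tilde{x}_{i^*} = 0$ and contradicting the fact that $x_{i^*}$ was chosen nonzero. The main obstacle is purely in getting the construction right, namely, arranging the subspaces $W_j$ so that selecting a nonzero vector from each $W_j$ forces a single left vertex to be orthogonal to $t$ of the hypothesized vectors; once the block decomposition is in place, the argument reduces to a clean pigeonhole step.
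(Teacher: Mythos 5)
Your proof is correct and uses essentially the same construction and argument as the paper: the same block decomposition of $\Fset^{nk}$, the same subspaces (the paper writes $V_j = \linspan(e_1 \otimes b_j, \ldots, e_k \otimes b_j)$, which is your $W_j$), and the same counting contradiction, merely phrased as a pigeonhole on a selector function $\sigma$ rather than by bounding the total number of nonzero coefficients $\alpha_{i,j}$ above by $k(t-1)$ and below by $m$.
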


\begin{proof}
Suppose that there exists a collection of $m = k \cdot (t-1) +1$ nonzero vectors $b_1, \ldots, b_m$ in $\Fset^n$ satisfying that every $t$ of them span the space $\Fset^{n}$.
To prove that $K_{k,m}$ is not $(n,k)$-subspace choosable, we have to show that it is possible to assign $n$-subspaces and $k$-subspaces over $\Fset$ to the left and right vertices of the graph $K_{k,m}$ respectively, so that no choice of a nonzero vector from each subspace satisfies that the vectors of the left vertices are orthogonal over $\Fset$ to the vectors of the right vertices.

Let $u_1, \ldots, u_k$ be the vertices of the left side, and let $v_1, \ldots, v_m$ be the vertices of the right side.
For every $i \in [k]$, we assign to the vertex $u_i$ the subspace $U_i$ of $\Fset^{k n}$ that includes all the vectors whose support is contained in the entries indexed by $[(i-1) \cdot n+1,i \cdot n]$. In other words, viewing the vectors of $\Fset^{k n}$ as a concatenation of $k$ parts of length $n$, $U_i$ is the $n$-subspace of all the vectors that have zeros in all the parts but the $i$th one.
Then, for every $j \in [m]$, we assign to the vertex $v_j$ the subspace $V_j$ spanned by the $k$ vectors $e_1 \otimes b_j, \ldots, e_k \otimes b_j$ of $\Fset^{k n}$.
Here, $e_i$ stands for the vector in $\Fset^k$ with $1$ on the $i$th entry and $0$ everywhere else, and $\otimes$ stands for the tensor product operation of vectors.
Hence, $V_j$ is the $k$-subspace of all the vectors in $\Fset^{k n}$ consisting of $k$ parts, each of which is equal to the vector $b_j$ multiplied by some element of $\Fset$.

Assume for the sake of contradiction that there exist nonzero vectors $x_i \in U_i$ ($i \in [k]$) and $y_j \in V_j$ ($j \in [m]$) such that $\langle x_i,y_j \rangle = 0$ for all $i$ and $j$. For any $i \in [k]$, let $\tilde{x}_i \in \Fset^n$ be the (nonzero) restriction of the vector $x_i$ to the $i$th part.
For any $j \in [m]$, write $y_j = \sum_{i \in [k]}{\alpha_{i,j} \cdot e_i \otimes b_j}$ for some coefficients $\alpha_{i,j} \in \Fset$. Since all the vectors $y_j$ are nonzero, it clearly follows that at least $m$ of the coefficients $\alpha_{i,j}$ are nonzero.
Now, observe that for all $i \in [k]$ and $j \in [m]$, $\langle x_i,y_j \rangle = 0$ implies that $\langle \tilde{x}_i, \alpha_{i,j} \cdot b_j \rangle = 0$.
However, combining the facts that $\tilde{x}_i$ is nonzero and that every $t$ vectors among $b_1, \ldots, b_m$ span $\Fset^{n}$, it follows that for every $i \in [k]$, at most $t-1$ of the coefficients $\alpha_{i,j}$ with $j \in [m]$ are nonzero. This yields that the total number of nonzero coefficients $\alpha_{i,j}$ is at most $k \cdot (t-1) < m$, providing the desired contradiction.
\end{proof}

We derive the following.
\begin{corollary}\label{cor:Intro_m_as}
Let $k$ be an integer, and let $\Fset$ be a field.
\begin{enumerate}
  \item\label{itm_cor:1} For an integer $n$, set $m = k \cdot (n-1)+1$. If $|\Fset| \geq m$ then the graph $K_{k,m}$ is not $(n,k)$-subspace choosable over $\Fset$.
  \item\label{itm_cor:2} For integers $n$ and $q$, set $m = k \cdot \frac{q^{n-1}-1}{q-1}+1$.
  If $\Fset$ is a finite field of size $q \geq k$ then the graph $K_{k,m}$ is not $(n,k)$-subspace choosable over $\Fset$.
\end{enumerate}
\end{corollary}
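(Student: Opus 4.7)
The plan is to deduce both items directly from Theorem~\ref{thm:GeneralUpper_as}: in each case I would exhibit a collection of $m$ nonzero vectors in $\Fset^n$ with the property that every $t$ of them span $\Fset^n$, where $t$ is determined by the relation $m=k(t-1)+1$ appearing in the hypothesis of that theorem. The corollary then follows immediately.

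For the first item, this relation gives $t=n$, so I need $m=k(n-1)+1$ nonzero vectors in $\Fset^n$ any $n$ of which are linearly independent. The natural construction is of Vandermonde type: using the assumption $|\Fset|\geq m$, pick distinct elements $\alpha_1,\ldots,\alpha_m\in\Fset$ and set $v_i=(1,\alpha_i,\alpha_i^2,\ldots,\alpha_i^{n-1})$. Any $n$ of these are the columns of a non-singular Vandermonde matrix with distinct parameters and hence span $\Fset^n$.

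For the second item, the relation $m=k(t-1)+1$ forces $t-1=\frac{q^{n-1}-1}{q-1}$, and the key observation is that this quantity is precisely the number of $1$-dimensional $\Fset_q$-subspaces contained in any hyperplane of $\Fset_q^n$ (equivalently, the number of points of $\mathbb{P}^{n-2}(\Fset_q)$). Since asking that every $t$ vectors span $\Fset_q^n$ is equivalent to asking that no hyperplane contains $t$ of them, a clean strategy is to choose the $m$ vectors to lie on $m$ distinct $1$-dimensional subspaces of $\Fset_q^n$, one nonzero representative from each. Every hyperplane will then contain at most $\frac{q^{n-1}-1}{q-1}=t-1$ of the chosen vectors by construction, exactly as needed. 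The hypothesis $q\geq k$ guarantees that enough $1$-dimensional subspaces are available, since a brief calculation yields $m=k\cdot\frac{q^{n-1}-1}{q-1}+1\leq\frac{q^n-1}{q-1}$, the total number of $1$-dimensional subspaces of $\Fset_q^n$.

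I do not anticipate a serious obstacle beyond recognizing the right constructions; once they are in hand, the verification of the spanning property is in each case a one-line linear-algebraic check, and Theorem~\ref{thm:GeneralUpper_as} delivers the conclusion.
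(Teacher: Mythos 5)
Your proposal is correct and matches the paper's proof in both constructions: Vandermonde columns for Item~1 and one representative from each $1$-dimensional subspace (projective points) for Item~2, with the same counting argument $m \leq \frac{q^n-1}{q-1}$ using $q\geq k$. The only cosmetic difference is that the paper phrases the spanning check in terms of arbitrary proper subspaces rather than hyperplanes, but these are equivalent.
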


\begin{proof}
For Item~\ref{itm_cor:1}, set $m = k \cdot (n-1)+1$, and let $\gamma_1, \ldots, \gamma_m$ be some distinct elements of the field $\Fset$.
For each $i \in [m]$, let $b_i$ be the vector in $\Fset^n$ defined by $b_i = (1, \gamma_i, \gamma_i^2, \ldots, \gamma_i^{n-1})$.
As follows from standard properties of the Vandermonde matrix, every $n$ of the vectors $b_1, \ldots, b_m$ are linearly independent and thus span the space $\Fset^n$.
By Theorem~\ref{thm:GeneralUpper_as} applied to these vectors with $t=n$, it follows that $K_{k,m}$ is not $(n,k)$-subspace choosable over $\Fset$, as required.

For Item~\ref{itm_cor:2}, set $t=\frac{q^{n-1}-1}{q-1}+1$ and $m = k \cdot (t-1)+1$.
Consider the equivalence relation on the nonzero vectors of $\Fset^n$ defined by calling two vectors equivalent if one is a multiple of the other by an element of $\Fset$.
Let $B$ be a collection of vectors in $\Fset^n$ that consists of one vector from every equivalence class, and note that $|B|=\frac{q^n-1}{q-1}$.
We observe that every $t$ vectors of $B$ span the space $\Fset^n$. Indeed, every strict subspace of $\Fset^n$ has dimension at most $n-1$, so it includes at most $q^{n-1}-1$ nonzero vectors, and thus at most $t-1$ vectors that represent different equivalence classes.
The assumption $q \geq k$ implies that
\[ m = k \cdot \frac{q^{n-1}-1}{q-1}+1 \leq \frac{q^n-1}{q-1} = |B|,\]
so by applying Theorem~\ref{thm:GeneralUpper_as} to $m$ of the vectors of $B$, we get that $K_{k,m}$ is not $(n,k)$-subspace choosable over $\Fset$, and we are done.
\end{proof}

Theorem~\ref{thm:GeneralUpper} and Corollary~\ref{cor:Intro_m} follow, respectively, from Theorem~\ref{thm:GeneralUpper_as} and Corollary~\ref{cor:Intro_m_as}.

We note that the approach proposed by Theorem~\ref{thm:GeneralUpper_as} for proving subspace non-choosability results seems to be more beneficial for large fields. This is justified by the following lemma that relates the size of the collection needed in Theorem~\ref{thm:GeneralUpper_as} to the size of the field.
Its proof is inspired by an argument given in~\cite{Ball12}.

\begin{lemma}
Let $\Fset$ be a finite field of size $q$, and let $m \geq t \geq n$ be integers.
If there exists a collection of $m$ nonzero vectors in $\Fset^n$ satisfying that every $t$ of them span the space $\Fset^{n}$, then
\[m \leq n-2+(q+1) \cdot (t-n+1).\]
\end{lemma}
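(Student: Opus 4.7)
The plan is to use a straightforward double counting via a pencil of hyperplanes through a carefully chosen codimension-$2$ subspace. Let $S$ denote the given collection of $m$ nonzero vectors in $\Fset^n$. The spanning hypothesis says in particular that no proper subspace of $\Fset^n$ contains $t$ vectors of $S$, so every proper subspace of $\Fset^n$ contains at most $t-1$ vectors of $S$.

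The first step is to choose an $(n-2)$-dimensional subspace $U \subseteq \Fset^n$ with $|S \cap U|$ as large as possible, and the key observation is that one can simply take $U$ to be the span of any $n-2$ linearly independent vectors from $S$. Such vectors exist, because any $t \geq n$ vectors of $S$ already span $\Fset^n$, so $S$ itself contains $n$ linearly independent vectors. Setting $a = |S \cap U|$, we then have $a \geq n - 2$, and since $U$ is a proper subspace we also have $a \leq t - 1$.

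Next, I would exploit the pencil of hyperplanes through $U$. Because $\Fset^n / U \cong \Fset^2$, there are exactly $q+1$ hyperplanes $H_1, \ldots, H_{q+1}$ of $\Fset^n$ that contain $U$, and every vector of $\Fset^n \setminus U$ lies in exactly one of them. Since each $H_i$ is a proper subspace, $|S \cap H_i| \leq t - 1$, and combined with $U \subseteq H_i$ this gives $|S \cap H_i \setminus U| \leq t - 1 - a$. Summing over the $q+1$ hyperplanes yields $m - a \leq (q+1)(t - 1 - a)$, which rearranges to $m \leq (q+1)(t-1) - q a$. The right-hand side is decreasing in $a$, so plugging in the lower bound $a \geq n-2$ and simplifying gives the target inequality $m \leq n - 2 + (q+1)(t-n+1)$. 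There is no real obstacle here; the whole plan hinges on the realization that one can force $|S \cap U| \geq n-2$ by taking $U$ to be the span of $n-2$ independent vectors of $S$, after which the pencil estimate improves the naive bound $m \leq (q+1)(t-1)$ (obtained by applying the same argument with $U = \{0\}$) by exactly the required additive $q(n-2)$.
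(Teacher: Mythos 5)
Your proof is correct and uses essentially the same approach as the paper's: fix an $(n-2)$-dimensional subspace $U$ spanned by $n-2$ linearly independent vectors of $S$, consider the $q+1$ hyperplanes through $U$, and bound the vectors of $S$ outside $U$ by summing the counts in each hyperplane. Your phrasing in terms of $a = |S\cap U|$ and the observation that the resulting bound is monotone in $a$ is a slightly cleaner bookkeeping of the same argument.
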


\begin{proof}
Let $S \subseteq \Fset^n$ be a set of $m$ nonzero vectors in $\Fset^n$ satisfying that every $t$ of them span the space $\Fset^{n}$.
Let $x_1, \ldots, x_{n-2}$ be $n-2$ linearly independent vectors of $S$, and consider all the $(n-1)$-subspaces of $\Fset^n$ that include all of these vectors.
Observe that the number of such subspaces is $q+1$, and that these subspaces cover together the entire space $\Fset^n$. Since every $t$ vectors of $S$ span $\Fset^{n}$, it follows that each of these $q+1$ subspaces includes less than $t-(n-2)$ of the vectors of $S \setminus \{x_1,\ldots,x_{n-2}\}$. We thus conclude that $m = |S| \leq (n-2) + (q+1) \cdot (t-(n-2)-1)$, and we are done.
\end{proof}

\subsubsection{Two Vertices on the Left Side}

We next consider the particular case of the complete bipartite graph $K_{2,m}$ with two vertices on the left side and $m$ vertices on the right side.
It will be convenient to use the following definition.

\begin{definition}
The complete bipartite graph $K_{2,m}$ with the vertex set $A = \{u_1,u_2\}$ on the left side and the vertex set $B$ of size $m$ on the right side is said to be {\em $(k_1;k_2,k_3)$-subspace choosable} over a field $\Fset$ if it is $f$-subspace choosable over $\Fset$ for the function $f: A \cup B \rightarrow \{k_1,k_2,k_3\}$ defined by $f(u_1)=k_1$, $f(u_2)=k_2$ and $f(u) = k_3$ for every $u \in B$.
\end{definition}

In what follows, we prove Theorem~\ref{thm:Intro_K_2,m}.
We start with its first item, restated and proved below.

\begin{proposition}
For every integer $n$ and for every field $\Fset$, the graph $K_{2,n-1}$ is $(n;2,2)$-subspace choosable over $\Fset$.
\end{proposition}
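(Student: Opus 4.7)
The plan is to reduce the problem to a small linear system by reformulating, for each right-side vertex, the existence of a compatible vector as a $2 \times 2$ determinantal vanishing condition. Let $U_1$ (of dimension $n$), $U_2$ (of dimension $2$), and $V_1, \ldots, V_{n-1}$ (each of dimension $2$) be the assigned subspaces, and for each $j \in [n-1]$ fix a basis $v_j^{(1)}, v_j^{(2)}$ of $V_j$. For a prospective choice $x_1 \in U_1$ and $x_2 \in U_2$, the existence of a nonzero $y_j \in V_j$ with $\langle y_j, x_1 \rangle = \langle y_j, x_2 \rangle = 0$ is equivalent to the singularity of the $2 \times 2$ matrix
\[
M_j(x_1, x_2) = \begin{pmatrix} \langle v_j^{(1)}, x_1 \rangle & \langle v_j^{(1)}, x_2 \rangle \\ \langle v_j^{(2)}, x_1 \rangle & \langle v_j^{(2)}, x_2 \rangle \end{pmatrix},
\]
since $M_j(x_1,x_2)$ represents, in the fixed basis, the linear map $V_j \to \Fset^2$ sending $v \mapsto (\langle v, x_1 \rangle, \langle v, x_2 \rangle)$.

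With this reformulation in hand, I would first pick an arbitrary nonzero $x_2 \in U_2$. Then, for each $j$, the expansion
\[
\det(M_j(x, x_2)) = \langle v_j^{(1)}, x \rangle \cdot \langle v_j^{(2)}, x_2 \rangle - \langle v_j^{(2)}, x \rangle \cdot \langle v_j^{(1)}, x_2 \rangle
\]
is a linear functional of $x \in U_1$ (over $\R$ or any finite field, where the inner product is bilinear). This yields $n-1$ linear constraints on the $n$-dimensional space $U_1$, whose common kernel has dimension at least $1$, so a nonzero $x_1 \in U_1$ satisfying all of them exists. By construction each $M_j(x_1, x_2)$ is singular and therefore provides a nonzero $y_j \in V_j$ orthogonal to both $x_1$ and $x_2$, giving the required valid vector choice.

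I do not anticipate any real obstacle, as the proof is essentially a dimension count after the determinantal reduction. The only minor subtlety is that over $\C$ the standard inner product is conjugate-linear in its second argument, so $\det(M_j(x, x_2))$ becomes a conjugate-linear functional of $x$; however, the kernel of any nonzero conjugate-linear functional on a finite-dimensional $\C$-vector space is still a $\C$-subspace of codimension one, so the same dimension count produces the required nonzero $x_1$ in that case as well.
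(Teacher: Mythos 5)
Your proof is correct and follows essentially the same strategy as the paper's: pick an arbitrary nonzero $x_2 \in U_2$, then use a dimension count on the $n$-dimensional $U_1$ against $n-1$ (conjugate-)linear constraints to produce $x_1$. The paper is slightly more direct — it commits to a specific $y_j \in V_j$ orthogonal to $x_2$ (possible since $\dim V_j = 2$) and then takes $x_1 \in U_1$ orthogonal to all the $y_j$, so the determinantal reformulation and the remark about conjugate-linearity over $\C$, while valid, are not needed.
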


\begin{proof}
For an integer $n$, consider the graph $K_{2,n-1}$.
To prove that it is $(n;2,2)$-subspace choosable over a field $\Fset$, consider for some integer $t$ arbitrary subspaces $U_1, U_2$ and $V_1, \ldots, V_{n-1}$ of $\Fset^t$ whose dimensions satisfy $\dim(U_1) = n$, $\dim(U_2) = 2$, and $\dim(V_j) = 2$ for $j \in [n-1]$.
Choose an arbitrary nonzero vector $x_2 \in U_2$, and for every $j \in [n-1]$ choose a nonzero vector $y_j \in V_j$ such that $\langle x_2, y_j \rangle = 0$. Note that this is possible since $\dim(V_j) = 2$. Finally, choose a vector $x_1 \in U_1$ satisfying $\langle x_1, y_j \rangle =0$ for all $j \in [n-1]$, which is possible by $\dim(U_1) = n$.
This gives us the required choice of vectors.
\end{proof}

By the above proposition, $K_{2,n-1}$ is $(n;2,2)$-subspace choosable over every field.
We consider the question of whether this holds even after adding another vertex to the right side of the graph.
Under certain conditions the answer is positive, as shown by the following result, confirming Item~\ref{itm:K_2,n_2} and the ``if'' part of Item~\ref{itm:K_2,n_3} in Theorem~\ref{thm:Intro_K_2,m}.

\begin{proposition}\label{prop:odd_n}
The graph $K_{2,n}$ is $(n;2,2)$-subspace choosable for every integer $n$ over $\C$ and for every odd integer $n$ over $\R$.
\end{proposition}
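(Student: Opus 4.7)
The plan is to reduce the problem to finding a nontrivial zero of a single homogeneous polynomial in two variables arising from the freedom in choosing $x_2 \in U_2$, and then to invoke classical facts about zeros of such polynomials over $\C$ and $\R$.

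Concretely, I would fix bases $a_j, b_j$ of each $V_j$ and a basis $e_1, \ldots, e_n$ of $U_1$, and for each $x_2 \in U_2$ define the vector $y_j(x_2) = \langle b_j, x_2 \rangle\, a_j - \langle a_j, x_2 \rangle\, b_j \in V_j$, which is automatically orthogonal to $x_2$ and vanishes precisely when $x_2 \in V_j^\perp$. Let $M(x_2)$ be the $n \times n$ matrix with $(j,i)$-entry $\langle e_i, y_j(x_2) \rangle$. A direct check shows that each entry is a linear function of $x_2 \in U_2$; over $\C$ this uses the fact that the conjugate-linearity in $x_2$ of $y_j(x_2)$ cancels with the conjugate-linearity of the Hermitian inner product in its second argument. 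Consequently $\det M(x_2)$ is a homogeneous polynomial of degree $n$ in the two coordinates of $x_2$.

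Given a nontrivial zero $x_2^* \in U_2$ of $\det M$, I would take a nonzero $x_1 \in U_1$ in $\ker M(x_2^*)$, set $y_j = y_j(x_2^*)$ for every $j$ with $y_j(x_2^*) \neq 0$, and for every $j$ with $x_2^* \in V_j^\perp$ choose an arbitrary nonzero $y_j$ in $V_j \cap x_1^\perp$; this last intersection has dimension at least $\dim V_j - 1 = 1$, so such a $y_j$ exists, and it is automatically orthogonal to $x_2^*$ because all of $V_j$ is. If $\det M$ vanishes identically, any nonzero $x_2^* \in U_2$ works, since then $\ker M(x_2^*) = U_1$.

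It then remains to exhibit a nontrivial zero of $\det M$ in $U_2$. Over $\C$ this is automatic for every $n \geq 1$, since any nonzero homogeneous polynomial of positive degree in two complex variables factors into linear forms and hence vanishes on some nonzero vector; this handles Item~\ref{itm:K_2,n_2}. Over $\R$ with $n$ odd, dehomogenizing $\det M$ by fixing one of the two coordinates of $x_2$ to $1$ yields a real polynomial of degree at most $n$; if this degree equals $n$, the odd-degree polynomial has a real root by the intermediate value theorem, and otherwise the other coordinate divides $\det M$, so setting it to $0$ produces a nontrivial real zero. This gives the ``if'' direction of Item~\ref{itm:K_2,n_3}. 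I do not anticipate a serious obstacle; the one delicate point is the degenerate case $y_j(x_2^*) = 0$, which the argument above absorbs by exploiting the extra degree of freedom in $V_j$.
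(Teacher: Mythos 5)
Your proof is correct, and it reaches the same engine as the paper's — a degree-$n$ determinant whose zeros over $\C$ (always) or over $\R$ ($n$ odd) drive the result — by a noticeably cleaner route. The paper first isolates the degenerate case where some nonzero vector in the two-dimensional left subspace is orthogonal to an entire $V_{j'}$ and handles it by hand; in the complementary case it invokes Lemma~\ref{lemma:Haynes} (imported from~\cite{HaynesPSWM10}) to manufacture mutually ``diagonal'' bases for that subspace and each $V_j$, and only then writes down the singularity condition on $\alpha M'_1 - \beta M'_2$. Your formula $y_j(x_2) = \langle b_j, x_2\rangle\, a_j - \langle a_j, x_2\rangle\, b_j$ makes both of those preliminaries unnecessary: for any basis of $V_j$ it produces a canonical spanning vector of $V_j \cap (x_2)^\perp$, linear in $x_2$ even over $\C$ (the two conjugate-linearities cancel, as you correctly note), and vanishing exactly when $x_2 \in V_j^\perp$, so the degenerate indices $j$ simply contribute a zero row of $M(x_2^*)$ and are absorbed by the spare degree of freedom in $V_j$. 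This buys a proof with no case split and no dependence on the structural Lemma~\ref{lemma:Haynes}, and it isolates cleanly that the only place $\R$ versus $\C$ matters is in finding a nontrivial zero of the binary form $\det M$. One harmless slip: when $\det M \equiv 0$ you write $\ker M(x_2^*) = U_1$, but identical vanishing of the determinant only forces each $M(x_2^*)$ to be singular, not to be the zero matrix; what you actually use is only that the kernel is nontrivial, which still holds.
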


We need the following lemma, which is essentially given in~\cite{HaynesPSWM10}.

\begin{lemma}[{\cite[Lemma~2.9]{HaynesPSWM10}}]\label{lemma:Haynes}
Let $t \geq 2$ be an integer, and let $\Fset$ be either $\R$ or $\C$.
Let $U, V$ be two $2$-subspaces of $\Fset^t$ such that for every nonzero vector $x \in U$ there exists a nonzero vector $y \in V$ such that $\langle x,y \rangle \neq 0$.
Then, for every basis $u^{(1)},u^{(2)}$ of $U$ satisfying $\langle u^{(i)},u^{(j)} \rangle \neq 0$ if and only if $i =j$, there exists a basis $v^{(1)},v^{(2)}$ of $V$ satisfying $\langle v^{(i)},v^{(j)} \rangle \neq 0$ if and only if $i =j$ and, in addition, $\langle u^{(i)},v^{(j)} \rangle = 0$ if and only if $i =j$.
\end{lemma}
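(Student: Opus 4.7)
The plan is to construct the required basis of $V$ explicitly from the two intersections $L_i := V \cap (u^{(i)})^\perp$ for $i \in \{1,2\}$. I would first verify that each $L_i$ is one-dimensional: the lower bound $\dim(L_i) \geq \dim(V) + \dim((u^{(i)})^\perp) - t = 2 + (t-1) - t = 1$ is standard, while $\dim(L_i) = 2$ would force $V \subseteq (u^{(i)})^\perp$, contradicting the assumption that some $y \in V$ is not orthogonal to $u^{(i)}$. Next I would argue that $L_1 \ne L_2$: any nonzero $v \in L_1 \cap L_2$ would be orthogonal to every element of $U$, placing it in $V \cap U^\perp$; but over $\R$ and $\C$ the positive-definite inner product yields a pairing $U \times V \to \Fset$ whose left- and right-kernels have equal dimensions, so the hypothesis $U \cap V^\perp = \{0\}$ is equivalent to $V \cap U^\perp = \{0\}$. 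Consequently $V = L_1 \oplus L_2$.

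The basis would then be defined by picking any nonzero $v^{(1)} \in L_1$ and $v^{(2)} \in L_2$, which by $L_1 \ne L_2$ span $V$. The defining orthogonality $\langle u^{(i)}, v^{(i)} \rangle = 0$ is immediate, while the off-diagonal products $\langle u^{(1)}, v^{(2)} \rangle$ and $\langle u^{(2)}, v^{(1)} \rangle$ are nonzero because otherwise the corresponding $v^{(j)}$ would lie in $L_1 \cap L_2 = \{0\}$. Finally, $\langle v^{(i)}, v^{(i)} \rangle \ne 0$ holds automatically over $\R$ and $\C$ since the inner product there is positive definite.

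The main obstacle I anticipate is securing the condition $\langle v^{(1)}, v^{(2)} \rangle = 0$. The only freedom in the choice of $v^{(i)} \in L_i$ is a nonzero scalar, which does not affect whether the inner product vanishes, so establishing this condition amounts to showing that the two lines $L_1$ and $L_2$ are themselves orthogonal subspaces of $V$. To attempt this I would decompose each $\ell_i \in L_i$ along the orthogonal direct sum $\Fset^t = U \oplus U^\perp$, writing $\ell_i = p_i + q_i$ with $p_i \in U$ and $q_i \in U^\perp$, use $\langle u^{(1)}, u^{(2)} \rangle = 0$ together with $\langle u^{(i)}, \ell_i \rangle = 0$ to pin down the components $p_i$ in terms of $u^{(1)}, u^{(2)}$, and then attempt to exploit the positive-definiteness of the ambient inner product to relate $\langle p_1, p_2 \rangle$ and $\langle q_1, q_2 \rangle$. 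Whether such an argument succeeds directly, or whether one must additionally readjust the basis $u^{(1)}, u^{(2)}$ of $U$ beforehand, is the most delicate point of the argument.
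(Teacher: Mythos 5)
This lemma is quoted from~\cite{HaynesPSWM10}; the present paper offers no proof of it, so there is nothing in-paper to compare against. That said, your construction is essentially forced: the requirement $\langle u^{(i)}, v^{(i)} \rangle = 0$ pins $v^{(i)}$ (up to a nonzero scalar) to the line $L_i = V \cap (u^{(i)})^\perp$, and your verifications---that each $L_i$ is one-dimensional, that $L_1 \neq L_2$ via the equal-kernel argument for the pairing $U \times V \to \Fset$, that the off-diagonal products $\langle u^{(i)}, v^{(j)} \rangle$ are nonzero, and that $\langle v^{(i)}, v^{(i)} \rangle \neq 0$ by positive-definiteness---are all correct.

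The obstacle you flag, namely securing $\langle v^{(1)}, v^{(2)} \rangle = 0$, is however not surmountable: it is false in general. Take $t = 3$ over $\R$, $U = \linspan(e_1, e_2)$ with $u^{(i)} = e_i$, and $V = \linspan(e_1 + e_3,\, e_2 + e_3)$. The hypothesis on $U,V$ holds (for nonzero $x = a e_1 + b e_2 \in U$ one of $\langle x, e_1+e_3\rangle = a$, $\langle x, e_2+e_3\rangle = b$ is nonzero), while $L_1 = \linspan(e_2 + e_3)$ and $L_2 = \linspan(e_1 + e_3)$ satisfy $\langle e_2 + e_3,\, e_1 + e_3 \rangle = 1 \neq 0$; since any admissible $v^{(1)}, v^{(2)}$ must be scalar multiples of vectors spanning these lines, orthogonality of $v^{(1)}$ and $v^{(2)}$ cannot be achieved. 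The ``if and only if'' in the $\langle v^{(i)}, v^{(j)} \rangle$ clause as transcribed in the paper therefore overclaims. Reassuringly, the genuine content---$\langle v^{(i)}, v^{(i)} \rangle \neq 0$ together with $\langle u^{(i)}, v^{(j)} \rangle = 0 \iff i = j$---is exactly what your argument delivers and exactly what the application in the proof of Proposition~\ref{prop:odd_n} uses: the orthogonality $\langle x_1, y_j \rangle = 0$ there rests only on the vanishing of the diagonal cross-products $\langle u^{(i)}, v^{(i)} \rangle$ and the normalization of the off-diagonal ones, never on $v^{(1)} \perp v^{(2)}$. So your proof is complete for the achievable part of the statement; the remaining clause appears to be an artifact of the transcription rather than a gap in your reasoning.
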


\begin{proof}[ of Proposition~\ref{prop:odd_n}]
Let $n$ be an integer, and let $\Fset$ be either $\R$ or $\C$.
Consider the graph $K_{2,n}$ with the vertex set $A = \{u_1,u_2\}$ on the left side and the vertex set $B = \{v_1,\ldots, v_n\}$ on the right side.
To prove that the graph is $(n;2,2)$-subspace choosable over $\Fset$, consider some subspaces $U_1, U_2, V_1, \ldots, V_n$ of $\Fset^t$ for some integer $t$, where $\dim(U_1)=2$, $\dim(U_2)=n$, and $\dim(V_j)=2$ for all $j \in [n]$. We will show now that there exist nonzero vectors $x_i \in U_i$ ($i \in [2]$) and $y_j \in V_j$ ($j \in [n]$) such that $\langle x_i , y_j \rangle =0$ over $\Fset$ for all $i$ and $j$.

Suppose first that there exists a nonzero vector $x_1 \in U_1$ such that $x_1$ is orthogonal to the subspace $V_{j'}$ for some $j' \in [n]$.
In this case, choose $x_1$ for the vertex $u_1$, and for every $j \in [n] \setminus \{j'\}$ let $y_j \in V_j$ be a nonzero choice for the vertex $v_j$ satisfying $\langle x_1, y_j \rangle =0$. Note that such a choice exists because $\dim(V_j)=2$.
These choices pose at most $n-1$ linear constraints on the choice for $u_2$, so by $\dim(U_2)=n$, there exists a nonzero vector $x_2 \in U_2$ that is orthogonal to all the vectors $y_j$ with $j \in [n] \setminus \{j'\}$. Finally, choose $y_{j'} \in V_{j'}$ as a nonzero vector orthogonal to $x_2$, whose existence is guaranteed by $\dim(V_{j'})=2$. The assumption on $x_1$ implies that $\langle x_1, y_{j'} \rangle =0$, so we obtain the required choice of vectors.

Otherwise, let $u_1^{(1)}, u_1^{(2)}$ be a basis of $U_1$ satisfying $\langle u_1^{(i)},u_1^{(j)} \rangle \neq 0$ if and only if $i =j$.
Since no nonzero vector of $U_1$ is orthogonal to some $V_j$, we can apply Lemma~\ref{lemma:Haynes} to obtain for every $j \in [n]$ a basis $v_j^{(1)}, v_j^{(2)}$ of $V_j$ that satisfies the assertion of the lemma. Note that it can be assumed that $\langle u_1^{(1)}, v_j^{(2)} \rangle = \langle u_1^{(2)}, v_j^{(1)} \rangle =1$ for all $j \in [n]$.
Let $M_1$ and $M_2$ be the $n \times t$ matrices over $\Fset$ whose $j$th rows are $v_j^{(1)}$ and $v_j^{(2)}$ respectively.

Now, to obtain the required choice of nonzero vectors, let $x_1 = \alpha \cdot u_1^{(1)} + \beta \cdot u_1^{(2)}$ be our nonzero choice for the vertex $u_1$ for some $\alpha,\beta \in \Fset$ to be determined.
Observe that this choice forces us to choose, up to a multiplicative constant, the vector $y_j = \alpha \cdot v_j^{(1)} - \beta \cdot v_j^{(2)}$ for the vertex $v_j$ for each $j \in [n]$.
For the vertex $u_2$, let $U \in \Fset^{t \times n}$ denote a matrix whose columns form a basis of the subspace $U_2$, and denote its choice by $x_2 = U \cdot \gamma$ for $\gamma \in \Fset^{n}$.
We consider the question of whether there exist $\alpha,\beta$ as above and a nonzero $\gamma$ such that $\langle x_2, y_j \rangle = 0$ for all $j \in [n]$.
Observe that this condition is equivalent to
\[(\alpha \cdot M_1 - \beta \cdot M_2 ) \cdot (U \cdot \gamma) = 0.\]
Letting $M'_1$ and $M'_2$ be the $n \times n$ matrices defined by $M'_1 = M_1 \cdot U$ and $M'_2 = M_2 \cdot U$, we ask whether there exist $\alpha, \beta \in \Fset$, that are not both zeros, and a nonzero vector $\gamma \in \Fset^n$ satisfying
\[(\alpha \cdot M'_1 - \beta \cdot M'_2 ) \cdot \gamma = 0.\]
If $\det(M'_1) = 0$ then we can take $\alpha = 1$ and $\beta = 0$, for which a nonzero $\gamma$ is guaranteed to exist.
Otherwise, if $\det(M'_1) \neq 0$, we take, say, $\beta = -1$, and show that for some $\alpha \in \Fset$, the matrix $\alpha \cdot M'_1 + M'_2$ is singular, implying the existence of the required vector $\gamma$. To see this, observe that $\alpha \cdot M'_1 + M'_2$ is singular if and only if $\alpha \cdot I_n + N$ is singular as well, where  $N = M'_2 \cdot (M'_1)^{-1}$. This reduces our question to whether for some $\alpha \in \Fset$ it holds that $\det(\alpha \cdot I_n + N)=0$ . This determinant is a degree $n$ polynomial in $\alpha$. Over $\Fset = \C$, this polynomial clearly has a root, and over $\Fset = \R$, assuming that $n$ is odd, it has a root as well. This completes the proof.
\end{proof}

We end this section by proving that adding a vertex to the right side of $K_{2,n-1}$ for an even integer $n$ results in a graph which is no longer $(n;2,2)$-subspace choosable over the real field $\R$ and over every finite field. This, in particular, gives us the ``only if'' part of Item~\ref{itm:K_2,n_3} of Theorem~\ref{thm:Intro_K_2,m}.

\begin{proposition}
Let $\Fset$ be either $\R$ or any finite field.
Then, for every even integer $n$, the graph $K_{2,n}$ is not $(n;2,2)$-subspace choosable over $\Fset$.
\end{proposition}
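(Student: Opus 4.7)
The plan is to construct an explicit $(n;2,2)$-subspace assignment for $K_{2,n}$ that admits no valid choice of nonzero orthogonal vectors, parametrized by an $n \times n$ matrix $N$ over $\Fset$ whose characteristic polynomial has no root in $\Fset$. The overall structure mirrors the analysis in the proof of Proposition~\ref{prop:odd_n}; the point is that when $n$ is even we have enough room to make the relevant matrix pencil nonsingular at every projective point over $\Fset$.

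First, working in $\Fset^{n+2}$ with coordinates indexed by $\{0, 1, \ldots, n, n+1\}$, I would assign $U_1 = \linspan(e_0, e_{n+1})$ to the left vertex receiving dimension $2$ and $U_2 = \linspan(e_1, \ldots, e_n)$ to the left vertex receiving dimension $n$. For each right vertex $v_j$ with $j \in [n]$, I would set
\[V_j = \linspan\Bigl(v_j^{(1)},\, v_j^{(2)}\Bigr),\quad v_j^{(1)} = e_{n+1} + e_j,\quad v_j^{(2)} = e_0 + \sum_{k=1}^{n} N_{j,k}\, e_k,\]
which is a $2$-subspace since the two spanning vectors have unique $1$'s in coordinates $n+1$ and $0$, respectively. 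Writing $x_1 = \alpha e_0 + \beta e_{n+1}$ with $(\alpha,\beta) \in \Fset^2 \setminus \{0\}$, a direct computation gives $\langle x_1, v_j^{(1)}\rangle = \beta$ and $\langle x_1, v_j^{(2)}\rangle = \alpha$, so that the unique (up to a nonzero scalar) vector of $V_j$ orthogonal to $x_1$ is $y_j = \alpha v_j^{(1)} - \beta v_j^{(2)}$, whose coordinates in $[1,n]$ are $\alpha \delta_{j,k} - \beta N_{j,k}$. Taking $x_2 = \sum_{k=1}^n c_k e_k \in U_2$ and requiring $\langle x_2, y_j\rangle = 0$ for every $j \in [n]$ is then equivalent to the single matrix equation $(\alpha I_n - \beta N)\, c = 0$. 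Hence a valid choice exists if and only if $\alpha I_n - \beta N$ is singular for some $(\alpha, \beta) \in \Fset^2 \setminus \{0\}$, which occurs if and only if $N$ has an eigenvalue in $\Fset$.

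Finally, I would exhibit, for every even $n$, an $n \times n$ matrix $N$ with no eigenvalue in $\Fset$. Over $\R$, take $N$ to be the block-diagonal matrix consisting of $n/2$ copies of $\bigl(\begin{smallmatrix}0 & -1 \\ 1 & 0\end{smallmatrix}\bigr)$, whose characteristic polynomial is $(\lambda^2+1)^{n/2}$ and thus has no real root. Over any finite field $\Fset$, take $N$ to be the block-diagonal matrix consisting of $n/2$ copies of the companion matrix of an irreducible quadratic over $\Fset$; such a polynomial exists over every finite field. The main technical step is the reduction in the previous paragraph, and in particular the identification of the forced vector $y_j$ together with the collapse of all orthogonality constraints into the single pencil equation $(\alpha I_n - \beta N)\, c = 0$; the remaining ingredients are routine.
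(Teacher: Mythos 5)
Your proof is correct, but the route is genuinely different from the paper's. The paper's proof invokes Proposition~\ref{prop:chrachterization} together with Remark~\ref{remark:char_R} to obtain a bad $2$-subspace assignment $L_1,L_2,R_1,R_2$ for $K_{2,2}=C_4$ in some $\Fset^t$, and then builds a bad $(n;2,2)$-assignment for $K_{2,n}$ in $\Fset^{tk}$ (with $n=2k$) by a block/tensor construction: $U_1$ stacks $k$ independent copies of $L_1$, $U_2$ is a diagonal embedding of $L_2$, and each block carries one copy of $R_1$ and one of $R_2$; any valid choice for $K_{2,n}$ would then restrict, on the block where $x_1$ is nonzero, to a valid choice for the original $K_{2,2}$ assignment. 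Evenness of $n$ enters only through the pairing of $R_1,R_2$ within each block. You instead build an explicit assignment from scratch and show that the full set of orthogonality constraints collapses to a single matrix-pencil equation $(\alpha I_n - \beta N)c = 0$; a valid choice exists if and only if this pencil is singular at some $(\alpha,\beta)\neq(0,0)$, i.e.\ if and only if $N$ has an eigenvalue in $\Fset$, and you exhibit an eigenvalue-free $N$ for even $n$. Your argument is precisely the converse of the pencil computation in the proof of Proposition~\ref{prop:odd_n}, so the ``if and only if'' of Theorem~\ref{thm:Intro_K_2,m} (Item~\ref{itm:K_2,n_3}) falls out of a single uniform calculation; it is also slightly stronger than stated, since over a finite field one can take $N$ to be the companion matrix of an irreducible polynomial of any degree $n \geq 2$, so the same construction proves non-$(n;2,2)$-vector choosability over finite fields for all $n \geq 2$, not only even $n$. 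The paper's argument is shorter because it black-boxes the $C_4$ result; yours is more self-contained and arguably more informative.
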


\begin{proof}
For a field $\Fset$ as above, Proposition~\ref{prop:chrachterization} and Remark~\ref{remark:char_R} imply that $\chv(K_{2,2},\Fset) > 2$.
Hence, for some integer $t$, there exist $2$-subspaces $L_1, L_2, R_1, R_2 \subseteq \Fset^t$ such that no choice of nonzero vectors $\widetilde{x}_i \in L_i$ and $\widetilde{y}_j \in R_j$ for $i,j \in [2]$ satisfies $\langle \widetilde{x}_i , \widetilde{y}_j \rangle = 0$ for all $i,j$.

For an even integer $n = 2k$, we define a subspace assignment to the vertices of $K_{2,n}$ that lies in the vector space $\Fset^{t \cdot k}$ as follows.
To the left vertices we assign the subspaces $U_1, U_2 \subseteq \Fset^{t \cdot k}$ defined by
\[U_1 = \linspan(e_1 \otimes L_1, \ldots, e_k \otimes L_1)~~\mbox{and}~~U_2 = \Big (\sum_{i=1}^{k}{e_i} \Big ) \otimes L_2,\]
and to the right vertices we assign the subspaces $V_1, \ldots, V_n \subseteq \Fset^{t \cdot k}$, defined by
\[V_{2j-1} = e_j \otimes R_1 ~~\mbox{and}~~V_{2j} = e_j \otimes R_2\]
for each $j \in [k]$.
Note that $\dim(U_1) = n$, $\dim(U_2)=2$, and $\dim(V_j)=2$ for all $j \in [n]$.
Intuitively, the assignment is designed so that the $t$-dimensional restriction of $U_1,U_2, V_{2j-1}, V_{2j}$ to the $j$th block is the assignment $L_1, L_2, R_1, R_2$.

To complete the proof, we show that there is no choice of nonzero vectors $x_i \in U_i$ and $y_j \in V_j$ for $i \in [2]$ and $j \in [n]$ that satisfies $\langle x_i , y_j \rangle = 0$ for all $i,j$.
So suppose for contradiction that such a choice exists, and let $j \in [k]$ be an integer for which the restriction of $x_1$ to the $j$th block is nonzero. Denote by $\widetilde{x}_1, \widetilde{x}_2, \widetilde{y}_1, \widetilde{y}_{2}$ the restrictions of the vectors $x_1, x_2, y_{2j-1}, y_{2j}$ to the $j$th block. Observe that these are nonzero vectors that satisfy $\widetilde{x}_i \in L_i$, $\widetilde{y}_j \in R_j$, and $\langle \widetilde{x}_i , \widetilde{y}_j \rangle = 0$ for all $i,j \in [2]$, in contradiction.
\end{proof}

\section{Hardness Result}\label{sec:hardness}

In this section we prove our hardness result, given in Theorem~\ref{thm:hardness}.
We start by presenting a gadget graph that will be used in the proof.

\subsection{Gadget Graph}

The main component of our hardness proof is the $\exists$-graph defined as follows.

\begin{definition}[$\exists$-graph]\label{def:exist_gadget}
For any integers $n_1,n_2$, define the $\exists$-graph $H=H_{n_1,n_2}$ and the function $f_H:V(H) \to \{2,3\}$ as follows.
The graph consists of a vertex labelled $\inver$ with degree 2,
whose two neighbors serve as the starting points of two subgraphs to which we will refer as the top and bottom branches.
Each branch is composed of a sequence of $4$-cycles connected by edges, as described in the figure below.
In each branch, the vertex of largest distance from $\inver$ in every $4$-cycle but the first has a neighbor labelled $\outver$ and another neighbor separating it from the next $4$-cycle (except for the last $4$-cycle).
The numbers of $\outver$ vertices in the top and bottom branches are $n_1$ and $n_2$ respectively.
The function $f_H$ is defined on the vertices of $H$ as indicated in the figure.
\vspace*{3mm}
	
%The IN vertex will be identified with a variable $x_j$.
%The two branches will be identified with assignments of TRUE or FALSE to $x_j$;
%there can be any number of OUT vertices in each of them (independently).
~~~~~\begin{tikzpicture}[node distance={13mm}, main/.style = {draw, circle}, every node/.style={scale=0.65}]
\node[main] (IN) [label=left:{$\inver$}] {2};
%%%%%%%%%%%

\node[main] (UL0) [above of=IN, above=3mm, right=12mm] {3} edge (IN);
\node[main] (UB0) [below right of=UL0] {2} edge (UL0);
\node[main] (UT0) [above right of=UL0] {2} edge (UL0);
\node[main] (UR0) [above right of=UB0] {3} edge (UT0) edge (UB0);

%%%%
\node[main] (UL1) [right of=UR0, right=1mm] {3} edge (UR0);
\node[main] (UB1) [below right of=UL1] {2} edge (UL1);
\node[main] (UT1) [above right of=UL1] {2} edge (UL1);
\node[main] (UR1) [above right of=UB1] {3} edge (UT1) edge (UB1);

\node[main] (UO1) [label=right:{$\outver$}, above right of=UR1] {2} edge (UR1);

\node[main] (UC1) [right of=UR1] {2} edge(UR1);

%%%%%
\node[main] (UL2) [right of=UC1] {3} edge (UC1);
\node[main] (UB2) [below right of=UL2] {2} edge (UL2);
\node[main] (UT2) [above right of=UL2] {2} edge (UL2);
\node[main] (UR2) [above right of=UB2] {3} edge (UT2) edge (UB2);

\node[main] (UO2) [label=right:{$\outver$}, above right of=UR2] {2} edge (UR2);

\node[main] (UC2) [right of=UR2] {2} edge(UR2);

%%%%
\node[auto=false] (Udots) [right of=UC2] {$\ldots$} edge (UC2);

%%%%%
\node[main] (UL3) [right of=Udots] {3} edge (Udots);
\node[main] (UB3) [below right of=UL3] {2} edge (UL3);
\node[main] (UT3) [above right of=UL3] {2} edge (UL3);
\node[main] (UR3) [above right of=UB3] {3} edge (UT3) edge (UB3);

\node[main] (UO3) [label=right:{$\outver$}, above right of=UR3] {2} edge (UR3);

%%%%%%%%%%%%%%%%%%%%%%%
\node[main] (DL0) [below of=IN, below=3mm, right=12mm] {3} edge (IN);
\node[main] (DB0) [below right of=DL0] {2} edge (DL0);
\node[main] (DT0) [above right of=DL0] {2} edge (DL0);
\node[main] (DR0) [above right of=DB0] {3} edge (DT0) edge (DB0);

%%%%
\node[main] (DL1) [right of=DR0, right=1mm] {3} edge (DR0);
\node[main] (DB1) [below right of=DL1] {2} edge (DL1);
\node[main] (DT1) [above right of=DL1] {2} edge (DL1);
\node[main] (DR1) [above right of=DB1] {3} edge (DT1) edge (DB1);

\node[main] (DO1) [label=right:{$\outver$}, below right of=DR1] {2} edge (DR1);

\node[main] (DC1) [right of=DR1] {2} edge(DR1);

%%%%%
\node[main] (DL2) [right of=DC1] {3} edge (DC1);
\node[main] (DB2) [below right of=DL2] {2} edge (DL2);
\node[main] (DT2) [above right of=DL2] {2} edge (DL2);
\node[main] (DR2) [above right of=DB2] {3} edge (DT2) edge (DB2);

\node[main] (DO2) [label=right:{$\outver$}, below right of=DR2] {2} edge (DR2);

\node[main] (DC2) [right of=DR2] {2} edge(DR2);

%%%%
\node[auto=false] (Ddots) [right of=DC2] {$\ldots$} edge (DC2);

%%%%%%%
\node[main] (DL3) [right of=Ddots] {3} edge (Ddots);
\node[main] (DB3) [below right of=DL3] {2} edge (DL3);
\node[main] (DT3) [above right of=DL3] {2} edge (DL3);
\node[main] (DR3) [above right of=DB3] {3} edge (DT3) edge (DB3);

\node[main] (DO3) [label=right:{$\outver$}, below right of=DR3] {2} edge (DR3);
\end{tikzpicture}
\end{definition}

We need the following two claims.

\begin{claim}\label{claim:prop_3}
Let $\Fset$ be any field.
Let $A$ denote a neighbor of $\inver$ in the $\exists$-graph, and let $B$ denote another vertex adjacent to $A$.
Then, for every $f_H$-subspace assignment for $H$ over $\Fset$, there exists a choice of nonzero vectors for $\inver$ and $B$ which poses a single linear constraint on the choice for $A$.
\end{claim}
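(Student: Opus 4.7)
The plan is to use the dimension mismatch: from the figure in Definition~\ref{def:exist_gadget}, $\inver$ and $B$ are both assigned $2$-dimensional subspaces (call them $W_{\inver}$ and $W_B$), while $A$ is assigned a $3$-dimensional subspace $W_A$. Each nonzero $x_{\inver}\in W_{\inver}$ and $x_B\in W_B$ induces, via the inner product, a linear functional on $W_A$, namely $y\mapsto\langle y,x_{\inver}\rangle$ and $y\mapsto\langle y,x_B\rangle$. The two orthogonality requirements on the eventual choice for $A$ amount to a single linear constraint exactly when these two functionals are linearly dependent in $W_A^{*}$, so the goal reduces to choosing $x_{\inver}$ and $x_B$ that induce linearly dependent functionals.

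To this end I would introduce the linear maps $\phi\colon W_{\inver}\to W_A^{*}$ and $\psi\colon W_B\to W_A^{*}$ sending $u$ to the restriction of $\langle\cdot,u\rangle$ to $W_A$, and proceed by a case split. If $\ker\phi\neq\{0\}$, then picking $x_{\inver}$ in the kernel annihilates the first functional, and any nonzero $x_B\in W_B$ yields at most one linear constraint on $x_A$; the case $\ker\psi\neq\{0\}$ is symmetric. Otherwise both maps are injective, so $\phi(W_{\inver})$ and $\psi(W_B)$ are two $2$-dimensional subspaces of the $3$-dimensional space $W_A^{*}$, and the standard dimension inequality forces their intersection to have dimension at least $2+2-3=1$. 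Any nonzero $v$ in this intersection admits nonzero preimages $x_{\inver}=\phi^{-1}(v)\in W_{\inver}$ and $x_B=\psi^{-1}(v)\in W_B$, and by construction $\phi(x_{\inver})=\psi(x_B)$, so the two orthogonality conditions collapse to the same linear constraint on $W_A$.

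The argument is essentially a dimension count combined with a kernel/injectivity case split, so no serious obstacle is expected. The only items that require a moment of care are: (i) confirming from the labeled figure that, for any neighbor $A$ of $\inver$ and any other neighbor $B$ of $A$, the triple of assigned dimensions is indeed $(2,3,2)$; and (ii) being explicit that in the injective case the injectivity is precisely what guarantees that the preimages $\phi^{-1}(v)$ and $\psi^{-1}(v)$ are nonzero, so that both selected vectors are legal choices for their respective vertices.
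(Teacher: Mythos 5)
Your argument is correct and takes a genuinely different route from the paper. You work in the dual: you introduce the restriction maps $\phi\colon W_{\inver}\to W_A^{*}$ and $\psi\colon W_B\to W_A^{*}$ and split on whether these have nontrivial kernel; when both are injective, a dimension count in the $3$-dimensional space $W_A^{*}$ produces a common functional $v$ in both images, and choosing the (nonzero, by injectivity) preimages collapses both orthogonality conditions onto the single constraint $v=0$ on $W_A$. The paper instead works in the ambient space: it first checks whether $W_{\inver}\cap W_B$ is nontrivial (if so, a common vector is chosen for both $\inver$ and $B$); otherwise $\dim(W_{\inver}+W_B)=4>\dim(W_A)$, so $(W_{\inver}+W_B)\cap W_A^{\perp}$ contains a nonzero $x=x_1+x_2$ with $x_1\in W_{\inver}$, $x_2\in W_B$, and the subcases $x_1=0$, $x_2=0$, and both nonzero are handled separately (in the last one, the constraints induced by $x_1$ and $x_2$ are negatives of one another on $W_A$). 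The two approaches are essentially dual: the paper's vector $x$ encodes exactly the linear dependence that your common functional $v$ records. Your version confines the dimension count to a fixed $3$-dimensional space rather than decomposing a vector of $\Fset^t$ and tracking vanishing summands, which is arguably a touch cleaner. One small caveat: over $\C$ with the paper's conjugate-symmetric form, $\phi$ and $\psi$ are conjugate-linear rather than linear, but kernels and images are still subspaces and the dimension count is unaffected (and the hardness theorem only needs the claim for $\R$ and finite fields anyway).
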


\begin{proof}
Let $W_{\inver}, W_A, W_B$ denote the subspaces assigned to the vertices $\inver,A,B$ respectively, and recall that $\dim(W_\inver)=2$, $\dim(W_A)=3$, and $\dim(W_B)=2$.
If there exists some nonzero vector in $W_{\inver} \cap W_B$, then choosing it for both $\inver$ and $B$ completes the proof.
Otherwise, it must hold that $\dim(W_{\inver}+W_B) = 4 > \dim(W_A)$, hence there exists some nonzero vector $x \in (W_{\inver}+W_B)\cap W_A^\perp$.
Write $x = x_1+x_2$ for $x_1\in W_{\inver}$ and $x_2\in W_B$.
If both of $x_1$ and $x_2$ are nonzero, choose them for $\inver$ and $B$.
Since every vector $y \in W_A$ satisfies $\inrprd{y,x} = 0$, it follows that if $\inrprd{y,x_1} = 0$ then $\inrprd{y,x_2} = 0$.
This implies that the only linear constraint that this choice poses on the vector of $A$ is the orthogonality to $x_1$.
If, however, $x_1$ is zero, then we have that $x_2 \in W_A^\perp$, so one can choose an arbitrary nonzero vector from $W_\inver$ for $\inver$ and $x_2$ for $B$.
Similarly, if $x_2$ is zero, we have that $x_1 \in W_A^\perp$, so one can choose $x_1$ for $\inver$ and an arbitrary nonzero vector from $W_B$ for $B$, completing the proof.
\end{proof}

\begin{claim}\label{claim:prop_4}
Let $\Fset$ be either $\R$ or any finite field, and let $x$ be either $e_6$ or $e_7$ in $\Fset^7$.
Then, there exists a subspace assignment $W_1,\ldots,W_4 \subseteq \Fset^7$ to the vertices $u_1, \ldots, u_4$ of $C_4$, with $\dim(W_1)= 3$ and $\dim(W_i)=2$ for $i \in \{2,3,4\}$, for which any valid choice of vectors assigns to $u_1$ a vector proportional to $x$.
\end{claim}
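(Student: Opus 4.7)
The plan is to leverage Proposition~\ref{prop:chrachterization} together with Remark~\ref{remark:char_R}, which guarantee that over both $\R$ and every finite field, the $4$-cycle $C_4$ admits a \emph{bad} $2$-subspace assignment: a collection of $2$-subspaces with no valid choice of pairwise-orthogonal nonzero vectors along the cycle. Examining the constructions in the proofs cited, these bad assignments live in $\Fset^m$ for some $m \leq 5$. Viewing $\Fset^5$ as the subspace $\linspan(e_1,\ldots,e_5)$ of $\Fset^7$, we have a bad $C_4$ assignment $U_1,U_2,U_3,U_4 \subseteq \linspan(e_1,\ldots,e_5)$.

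My construction sets $W_1 = U_1 + \linspan(x)$ and $W_i = U_i$ for $i \in \{2,3,4\}$. Because $x \in \{e_6,e_7\}$ does not lie in $\linspan(e_1,\ldots,e_5)$, we have $x \notin U_1$, so $\dim(W_1) = 3$, while the other subspaces keep dimension $2$, as required.

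The main step is to show that any valid choice of nonzero vectors for $C_4$ under this assignment must put $u_1$ on a scalar multiple of $x$. Write a candidate vector for $u_1$ as $x_1 = u + \alpha x$ with $u \in U_1$ and $\alpha \in \Fset$. Because $W_2,W_3,W_4$ are supported on the first five coordinates while $x$ is supported on the sixth or seventh, the inner product of $x_1$ with any choice $x_2 \in W_2$ or $x_4 \in W_4$ collapses to $\langle u,x_2 \rangle$ or $\langle u,x_4 \rangle$; the $\alpha x$ component is invisible to the orthogonality constraints. Thus, the restriction of any valid choice to the lower coordinates yields a choice of nonzero vectors for the cycle $u \in U_1, x_2 \in U_2, x_3 \in U_3, x_4 \in U_4$ with all four cyclic inner products equal to zero --- \emph{unless} $u = 0$. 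The badness of $U_1,U_2,U_3,U_4$ rules out the former possibility, forcing $u = 0$ and hence $x_1 = \alpha x$ with $\alpha \neq 0$.

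No real obstacle arises: once one has the right bad $C_4$ assignment over the base field (which is exactly what Proposition~\ref{prop:chrachterization} and Remark~\ref{remark:char_R} provide) and one notices that appending $x$ in a ``fresh'' coordinate cannot help satisfy the cycle constraints, the argument is a short linear-algebraic verification. The only care needed is to handle characteristic $2$ separately, where the bad $C_4$ construction uses five coordinates rather than four; since $5 \leq 7$, there is room in $\Fset^7$ in every case.
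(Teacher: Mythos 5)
Your proof is correct and takes essentially the same approach as the paper: embed a bad $2$-subspace assignment for $C_4$ into the first five coordinates of $\Fset^7$, set $W_1 = U_1 + \linspan(x)$, and note that if the $u_1$-vector had a nonzero component in the first five coordinates, its restriction there together with the unchanged $x_2,x_3,x_4$ would yield a valid choice for the bad assignment, a contradiction.
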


\begin{proof}
The proof of Proposition~\ref{prop:chrachterization} (see also Remark~\ref{remark:char_R}) describes for every field $\Fset$ as above, a $2$-subspace assignment for $C_4$ in $\Fset^5$ that admits no valid choice of vectors. Let $W_1,\ldots,W_4 \subseteq{\Fset^7}$ be the subspaces obtained from the subspaces of this assignment by adding two additional entries with values zero to their vectors.
Define $W'_1 = W_1+\linspan(x)$, and observe that any valid choice of vectors from the subspace assignment $W'_1,W_2, W_3, W_4$ assigns to $u_1$ a vector proportional to $x$, as otherwise, the restriction of such a choice to the first five entries would provide a valid choice for the given $2$-subspace assignment for $C_4$.
\end{proof}

The following lemma summarizes some properties of the $\exists$-graph.

\begin{lemma}\label{lemma:gadget_graph_properties}
	The $\exists$-graph $H$ and the function $f_H$ given in Definition~\ref{def:exist_gadget} satisfy the following.
	\begin{enumerate}
		\item\label{itm:1} The graph $H$ is bipartite, and every bipartition of $H$ puts all $\outver$ vertices in the same part.	
		\item\label{itm:2} For every $f_H$-subspace assignment for $H$ over any field $\Fset$, any choice of a nonzero vector for $\inver$ can be extended to all vertices of each of the branches.	
		\item\label{itm:3} For every $f_H$-subspace assignment for $H$ over any field $\Fset$ and for each of the branches of $H$,
		there exists a choice of a nonzero vector for $\inver$ which is compatible with any choice of vectors for the $\outver$ vertices of that branch.
		\item\label{itm:4} Let $\Fset$ be either $\R$ or any finite field, and let $t \geq 8$ and $j \in [t]$ be some integers. Then, there exists an $f_H$-subspace assignment for $H$ in $\Fset^t$ such that for every valid choice of vectors for $H$ there exists a branch all of whose $\outver$ vertices are assigned vectors proportional to $e_j$.
	\end{enumerate}
\end{lemma}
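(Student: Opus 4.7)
The plan is to address the four items in turn, using Claims~\ref{claim:prop_3} and~\ref{claim:prop_4} as the main technical inputs. For \emph{Property 1}, a BFS from $\inver$ places the $\outver$ vertex in the $k$-th (non-first) $4$-cycle of each branch at odd distance $4k+3$ from $\inver$, so the unique bipartition of the connected bipartite graph $H$ puts every $\outver$ vertex in the same class. For \emph{Property 2}, I would extend $x_\inver$ greedily outward: in each $4$-cycle $L_j, T_j, R_j, B_j$ with dimensions $3,2,3,2$, the vertex $L_j$ has a single previously-chosen neighbor, $T_j$ and $B_j$ each have one, and $R_j$ has two; every separator $C_j$ and every $\outver_j$ (dim $2$) has only $R_j$ as a previously-chosen neighbor. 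In every case the dimension strictly exceeds the number of imposed orthogonalities, so a nonzero choice exists.

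The main obstacle is \emph{Property 3}. For the chosen branch I would first apply Claim~\ref{claim:prop_3} with $A = L_0$ in that branch and $B = T_0$ in the same branch, obtaining $x_\inver$ together with $x_{T_0}$ under which the choice for $L_0$ satisfies only a single linear constraint, so a $2$-dimensional subspace $V_0 \subseteq W_{L_0}$ of valid candidates remains. I would then prove by induction on the index $j$ that the set of $x_{R_j}$ reachable through some valid partial assignment up to $R_j$ is a $2$-dimensional subspace $V_j^R \subseteq W_{R_j}$. Given this, any nonzero $y_j$ assigned to $\outver_j$ is accommodated by choosing $x_{R_j}$ in $V_j^R \cap (y_j)^\perp$, which is nonzero since a $2$-dimensional subspace meets any hyperplane in dimension at least $1$. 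Propagation through $C_j$ (dim $2$, one constraint) and into the next $4$-cycle restores the inductive hypothesis, and the unchosen branch is completed using Property 2. The delicate step is the inductive claim itself: one must verify, for a $4$-cycle with dimensions $3,2,3,2$, that $2$-dimensional freedom at $L_j$ yields $2$-dimensional freedom at $R_j$. The key observation is that the two orthogonality constraints on $R_j$ coming from $T_j$ and $B_j$ are each determined (up to scalar) by $x_{L_j}$, so as $x_{L_j}$ ranges over the $2$-dimensional $V_j^L$ the effective rank of the combined constraints depends on only a single scalar parameter, preserving a $2$-dimensional image in $W_{R_j}$; this is precisely a localized version of the argument underlying Claim~\ref{claim:prop_3}.

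For \emph{Property 4}, I would construct an explicit $f_H$-subspace assignment in $\Fset^t$ by coupling the two branches through $\inver$. In each $4$-cycle of both branches I would embed the gadget of Claim~\ref{claim:prop_4} with $x = e_j$ (the hypothesis $t \geq 8$ leaves room to permute coordinates so that $e_j$ plays the role of $e_6$ or $e_7$), which forces the vector chosen at $L_j$ (or $R_j$) to be proportional to either $e_j$ or to a second fixed direction. Each $W_{\outver_k}$ would be the span of $e_j$ and a second vector whose inner product with the alternative enforced direction at $x_{R_k}$ is nonzero, so that $x_{\outver_k}$ is pinned to $e_j$ whenever the enforcement at $R_k$ resolves to the $e_j$-orthogonal direction. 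The orthogonality at $\inver$ between $x_{L_0}$ of the top branch and $x_{L_0}$ of the bottom branch then prevents both branches from simultaneously failing to enforce $e_j$ at every $\outver$, yielding the existence of at least one branch where all $\outver$ vectors are proportional to $e_j$.
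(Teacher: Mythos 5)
Your handling of Items~1 and~2 matches the paper's reasoning; both are fine. The problems are in Items~3 and~4.

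For Item~3, you propagate \emph{forward} from $\inver$ and claim inductively that ``the set of $x_{R_j}$ reachable through some valid partial assignment up to $R_j$ is a $2$-dimensional subspace $V_j^R \subseteq W_{R_j}$.'' This is not true in general: the set of reachable $x_{R_j}$ is a union of lines, one for each choice of $x_{L_j}$ in your $2$-dimensional pool (after passing through $T_j$ and $B_j$), and such a union is a cone, not a subspace. For instance, with $W_{L_j}$, $W_{R_j}$ spanned by $e_1,e_2,e_3$ and $W_{T_j}=W_{B_j}=\linspan(e_1,e_2)$ with $V_j^L = \linspan(e_1,e_2)$, the reachable set of $x_{R_j}$ is all of $W_{R_j}$, not a $2$-plane. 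Your justification (``the effective rank of the combined constraints depends on only a single scalar parameter, preserving a $2$-dimensional image'') is not a proof, and it is not a ``localized version'' of Claim~\ref{claim:prop_3}, whose hypothesis (that $A$ sees only the two chosen vectors $x_{\inver}$ and $x_{B}$) does not hold mid-branch. The paper avoids all of this by propagating \emph{backward}: fix arbitrary vectors at the $\outver$ vertices, start from the $4$-cycle farthest from $\inver$, extend greedily toward $\inver$ (each step leaves at least a $1$-dimensional choice), and only at the very last step invoke Claim~\ref{claim:prop_3} to ensure that the pre-committed pair $(x_{\inver},x_{B})$ imposes a single constraint on $A$, leaving room for the two constraints from $B_0$ and from Claim~\ref{claim:prop_3}. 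This is both simpler and actually correct; you should adopt it.

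For Item~4, your construction deviates from what the mechanism requires. Using $x = e_j$ in every $4$-cycle gadget discards the carefully staged coordinate chain the paper sets up: $\inver$ is given $\linspan(e_6,e_7)$; the first $4$-cycle of each branch uses Claim~\ref{claim:prop_4} with $x=e_6$ (with $e_6$ added to the top $L_0$, $e_7$ to the bottom $L_0$); all subsequent $4$-cycles use $x=e_7$ with $e_6$ added to the $L$-vertex; separators get $\linspan(e_6,e_7)$; and the $\outver$ vertices get $\linspan(e_7,e_j)$. This chain $e_6 \to e_7 \to e_j$ is what makes the forcing propagate: if $x_{\inver}$ has nonzero $e_6$-coordinate, the top $L_0$ must lie in the Claim~\ref{claim:prop_4} gadget, forcing $R_0 \sim e_6$, which forces $L_1$ into the next gadget, forcing $R_1 \sim e_7$, which forces $\outver_1 \sim e_j$, etc. Your version also conflates the first $4$-cycle (which has no $\outver$ vertex and serves only to set up the coupling) with the later ones. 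Finally, there is no ``orthogonality at $\inver$ between $x_{L_0}$ of the top branch and $x_{L_0}$ of the bottom branch'': the two $L_0$'s are not adjacent. The coupling is indirect, through the fact that $x_{\inver}$ must have a nonzero $e_6$-coordinate or a nonzero $e_7$-coordinate, and each alternative triggers the forcing in one branch. This is a genuine difference, not a cosmetic one: without the staged two-coordinate chain and without the correct account of how $\inver$ couples the branches, the construction does not force all $\outver$ vectors in a branch to $e_j$.
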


\begin{proof}
For Item~\ref{itm:1}, it can be easily seen that the graph defined in Definition~\ref{def:exist_gadget} is bipartite. Since the distance between every two $\outver$ vertices is even, it follows that every bipartition puts all of them in the same part.

For Item~\ref{itm:2}, consider some $f_H$-subspace assignment for $H$ over a field $\Fset$, and notice that any choice of a vector for $\inver$
reduces the dimension of the subspaces available to its neighbors by at most $1$.
So given any choice for $\inver$, one can choose, in each branch, an available nonzero vector for $\inver$'s neighbor, reducing the dimension of the subspaces available to its other neighbors to not less than $1$, allowing us to choose for them nonzero vectors as well. Their common neighbor has a subspace of dimension $3$, so the two chosen vectors of its neighbors reduce the dimension of the subspace available to it to not less than $1$, again allowing us to choose a nonzero vector.
Proceeding this way for vertices with increasing distances from $\inver$ allows us to choose vectors for all vertices of each of the branches of $H$.

For Item~\ref{itm:3}, consider some $f_H$-subspace assignment for $H$ over a field $\Fset$ and an arbitrary branch of $H$.
Let $A$ denote the neighbor of $\inver$ in this branch, let $B$ and $C$ denote the other neighbors of $A$, and let $D$ denote the remaining vertex of their $4$-cycle.
By Claim~\ref{claim:prop_3}, there exists a choice of nonzero vectors for $\inver$ and $B$ which poses a single linear constraint on the choice for $A$.
We claim that this choice for $\inver$ and $B$ is compatible with any choice of vectors for the $\outver$ vertices of that branch.
To see this, consider an arbitrary choice of nonzero vectors for these $\outver$ vertices.
The single neighbor of each $\outver$ vertex is assigned a $3$-subspace, so having made our choice for the $\outver$ vertices, each of these must still have a $2$-subspace from which its vector can be chosen.
Starting from the neighbor of the $\outver$ vertex of largest distance from $\inver$, we choose an arbitrary nonzero vector from its available $2$-subspace, allowing us to choose a nonzero vector for each of its two neighbors. Their other common neighbor has a $3$-subspace, so it includes a nonzero vector orthogonal to the vectors chosen for its neighbors. We proceed this way along the branch until we arrive to the $4$-cycle closest to $\inver$. Given the vectors chosen for the previous $4$-cycle and the choice for $B$, it is possible to choose from the $3$-subspace of $D$ some nonzero vector orthogonal to the vectors already chosen for its neighbors. Given this choice, we choose a nonzero vector orthogonal to it from the $2$-subspace of $C$, and since the choice for $\inver$ and $B$ poses a single linear constraint on $A$, it is possible to choose a nonzero vector for $A$ from its $3$-subspace. By Item~\ref{itm:2} of the lemma, our choice can be extended to the other branch, and we are done.

For Item~\ref{itm:4}, let $\Fset$ be either $\R$ or any finite field, and let $t \geq 8$ and $j \in [t]$ be some integers.
Assume without loss of generality that $j\ge 8$.
We define an $f_H$-subspace assignment for $H$ in $\Fset^t$ as follows.
The vertex $\inver$ is assigned the subspace $\linspan(e_6,e_7)$.
By Claim~\ref{claim:prop_4}, for $x$ being either $e_6$ or $e_7$ in $\Fset^7$, there exists a subspace assignment $W_1,\ldots,W_4 \subseteq \Fset^7$ to the vertices $u_1, \ldots, u_4$ of $C_4$, with $\dim(W_1)= 3$ and $\dim(W_i)=2$ for $i \in \{2,3,4\}$, for which any valid choice of vectors assigns to $u_1$ a vector proportional to $x$.
By extending these subspaces to $\Fset^t$ with zeros in the last $t-7$ entries, one can get such a subspace assignment in $\Fset^t$.
We put this subspace assignment with $x = e_6$ on the $4$-cycle closest to $\inver$ in each branch, where the $3$-subspace is assigned to the vertex with largest distance from $\inver$.
To the subspace of the top neighbor of $\inver$, we add the vector $e_6$, and to the one of the bottom, we add the vector $e_7$.
For all remaining $4$-cycles in the graph, we assign the subspaces of $\Fset^t$ given by Claim~\ref{claim:prop_4} with $x=e_7$, again with the $3$-subspace assigned to the vertex of largest distance from $\inver$, and add the vector $e_6$ to the subspace of the vertex closest to $\inver$.
Finally, to all $\outver$ vertices we assign the subspace $\linspan(e_7,e_j)$, and to the remaining vertices separating the 4-cycles, we assign the subspace $\linspan(e_6,e_7)$.

We claim that this $f_H$-subspace assignment for $H$ satisfies that for every valid choice of vectors there exists a branch all of whose $\outver$ vertices are assigned vectors proportional to $e_j$.
To see this, consider such a valid choice of vectors, and recall that it assigns to $\inver$ a nonzero vector from $\linspan(e_6,e_7)$.
In such a vector, at least one of the sixth and seventh entries is nonzero. We show that in the former case all the vectors of the $\outver$ vertices of the top branch are proportional to $e_j$. A similar argument shows that in the latter case, the same holds for the bottom branch.
Our assumption on the vector of $\inver$ implies that its neighbor in the top branch is orthogonal to $e_6$.
This essentially restricts its $4$-cycle to the subspace assignment given by Claim~\ref{claim:prop_4}, thus ensuring that the vertex of largest distance from $\inver$ in this $4$-cycle is assigned a vector proportional to $e_6$.
Applying this argument again to the next $4$-cycle yields that its vertex of largest distance from $\inver$ is assigned a vector proportional to $e_7$.
This ensures that the vector of its $\outver$ neighbor is proportional to $e_j$ and that the vector of its neighbor that separates its cycle from the next one is proportional to $e_6$. By repeating this argument for all the following $4$-cycles, the proof is completed.
\end{proof}

\subsection{Proof of Theorem~\ref{thm:hardness}}

To prove Theorem~\ref{thm:hardness}, we first prove the following.

\begin{theorem}\label{theorem:f_choosable_np_hard}
Let $\Fset$ be either $\R$ or any finite field.
It is $\NP$-hard to decide given a bipartite graph $G=(V,E)$ and a function $f: V \rightarrow \{2,3\}$ whether $G$ is $f$-subspace choosable over $\Fset$.
\end{theorem}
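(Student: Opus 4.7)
The plan is to reduce, in polynomial time, from 3-SAT. Given a 3-CNF formula $\phi$ with variables $x_1,\ldots,x_n$ and clauses $C_1,\ldots,C_m$, I would construct a bipartite graph $G_\phi$ together with a function $f_\phi:V(G_\phi)\to\{2,3\}$ such that $\phi$ is satisfiable if and only if $G_\phi$ is $f_\phi$-vector choosable over $\Fset$. For each variable $x_i$ I introduce a variable vertex $v_i$ with $f_\phi(v_i)=2$, whose chosen vector direction will encode the truth value of $x_i$. For each clause $C_j=\ell_{j,1}\vee\ell_{j,2}\vee\ell_{j,3}$ I attach a clause gadget built from a constant number of copies of the $\exists$-graph of Definition~\ref{def:exist_gadget}: the $\outver$ vertices of the gadget are connected, through short auxiliary paths of suitable parity (handling negated literals and preserving bipartiteness via Item~\ref{itm:1} of Lemma~\ref{lemma:gadget_graph_properties}), to the variable vertices that realize the literals of $C_j$. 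Since a single $\exists$-graph implements a two-way disjunction, I would chain two copies in series per clause to simulate a three-way disjunction over the three literals.

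For the ``$\phi$ satisfiable $\Rightarrow$ $G_\phi$ is $f_\phi$-choosable'' direction I would fix an arbitrary $f_\phi$-subspace assignment and use a satisfying assignment $\sigma$ to guide a valid vector choice. Concretely, pick an arbitrary nonzero vector for each $v_i$, and then for each clause gadget route the choice through the branch corresponding to a literal of $C_j$ that $\sigma$ makes true: Item~\ref{itm:3} of Lemma~\ref{lemma:gadget_graph_properties} supplies a vector for that gadget's $\inver$ which is compatible with any vectors already forced on the $\outver$ vertices of the selected branch, and Item~\ref{itm:2} then extends the choice through the rest of the gadget.

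For the reverse direction I would prove the contrapositive by exhibiting, when $\phi$ is unsatisfiable, a single ``hard'' $f_\phi$-subspace assignment that admits no valid vector choice. The variable vertices are all assigned the common $2$-subspace $\linspan(e_T,e_F)$, so that any valid vector choice induces a Boolean assignment $\sigma^\ast$; each clause gadget is assigned the adversarial subspaces guaranteed by Item~\ref{itm:4} of Lemma~\ref{lemma:gadget_graph_properties}, with the target vector $e_j$ chosen according to the signs of the three literals in $C_j$ so that the branch forced by Item~\ref{itm:4} encodes exactly the statement ``$\sigma^\ast$ satisfies $C_j$''. Unsatisfiability of $\phi$ then precludes the existence of any valid vector choice, establishing $\chv$-non-choosability.

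The main obstacle will be engineering the clause gadget so that two-way $\exists$-graphs combine cleanly into a three-way disjunction over the three literals of a clause, while simultaneously keeping the ambient graph bipartite and the values of $f_\phi$ confined to $\{2,3\}$. A subtler point is eliminating ``degenerate'' valid choices in the hard subspace assignment, in which some $v_i$ is assigned a genuine linear combination of $e_T$ and $e_F$ rather than one of these two canonical directions; the arrangement of auxiliary paths and gadget assignments must be calibrated to rule this out. All of this has to work uniformly for $\Fset=\R$ and for every finite field, which is what makes the construction work at all, since Item~\ref{itm:4} of Lemma~\ref{lemma:gadget_graph_properties} is available precisely in these two cases.
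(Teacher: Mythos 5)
Your reduction inverts the paper's architecture: the paper places one $\exists$-graph per \emph{variable} (the two branches encode the two truth values, with one $\outver$ vertex per occurrence of the corresponding literal) and uses a single vertex with $f$-value $3$ per \emph{clause}, whose $3$-dimensional subspace naturally coordinates the three neighboring literals. You instead propose one (chained) $\exists$-graph per clause and a bare degree-$2$ vertex per variable, linked by parity-controlled paths. This swap runs against the grain of Lemma~\ref{lemma:gadget_graph_properties}. Item~\ref{itm:2} only guarantees that a choice at $\inver$ can be extended through the branches when the $\outver$ vertices carry no external constraint, and Item~\ref{itm:3} absorbs external constraints on the $\outver$ vertices of \emph{one} branch only. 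In your construction the $\outver$ vertices of \emph{both} branches of a clause gadget are externally constrained by the (already chosen) variable vectors through the paths, so after routing $\inver$ toward the satisfied literal's branch, each $\outver$ vertex of the other branch has a $2$-dimensional subspace facing two constraints (one from its internal neighbor, one propagated from $v_i$). Item~\ref{itm:2} gives no control over which vector it produces there, so the completeness direction simply does not follow from the lemma. The paper sidesteps this by choosing clause vertices \emph{after} the $\outver$ vertices, exploiting that each dimension-$3$ clause vertex meets at most two already-fixed neighbors.

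The soundness direction has a second gap that you flag but do not close. Assigning $\linspan(e_T,e_F)$ to every $v_i$ does not make a valid choice induce a Boolean assignment: a genuine combination $\alpha e_T+\beta e_F$ with $\alpha,\beta\neq 0$ carries no truth value. In the paper the analogous degeneracy at $\inver$ is harmless, because Item~\ref{itm:4} only requires that at least one branch be driven to $e_j$ (if both entries of the $\inver$ vector are nonzero, both branches are driven), and the satisfying assignment is then read from whichever branch is \emph{not} driven; there is no analogous escape in your scheme. Furthermore, Item~\ref{itm:4} forces an entire branch's $\outver$ vertices to a \emph{single} direction $e_j$, whereas a per-clause gadget's three $\outver$ vertices need to address three different variables; a single $e_j$ cannot distinguish them, so the intended encoding of ``$\sigma^\ast$ satisfies $C_j$'' is not realized. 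The ``chain two $\exists$-graphs in series'' step is also unspecified in a way that matters (the identified vertex would accumulate degree incompatible with keeping $f$-values in $\{2,3\}$). Until these points are resolved, the proposal is a different route that does not yet yield the theorem.
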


\begin{proof}
Let $\Fset$ be a field as in the statement of the theorem.
Given a $3$SAT formula $\phi$ with clauses $C_1,\ldots,C_m$ over the variables $x_1,\ldots, x_n$, we efficiently construct a graph $G_\phi=(V,E)$ and a function $f: V \to \{2,3\}$
such that $\phi$ is satisfiable if and only if $G_\phi$ is $f$-subspace choosable over $\Fset$.
Note that it can be assumed that each clause of $\phi$ contains three literals involving three distinct variables.
	
First, for each variable $x_j$, construct an $\exists$-graph $H_{n_1,n_2}$ (see Definition~\ref{def:exist_gadget}), where $n_1$ and $n_2$ are, respectively, the numbers of occurrences of the literals $x_j$ and $\overline{x_j}$ in $\phi$. Label the $\outver$ vertices of the top branch of $H_{n_1,n_2}$ by $x_j$, and the $\outver$ vertices of its bottom branch by $\overline{x_j}$. Define the function $f$ on the vertices of this graph as in Definition~\ref{def:exist_gadget}.
Next, for each clause $C_i$ of $\phi$, add a vertex representing $C_i$ and define its $f$ value to be $3$.
For each literal $x_j$ occurring in a clause $C_i$, add an edge between the vertex representing $C_i$ and a previously unchosen vertex labelled $x_j$, and likewise for the literals of the form $\overline{x_j}$.
Observe that $G_\phi$ is bipartite, as Item~\ref{itm:1} of Lemma~\ref{lemma:gadget_graph_properties} implies that there exists a bipartition placing all $\outver$ vertices of all $\exists$-graphs in the same part, thus the clause vertices may all belong to the opposite part.
Note that $G_\phi$ can be constructed in polynomial running time.

We prove now the correctness of the reduction.
Suppose first that there exists a satisfying assignment for $\phi$, and consider an arbitrary $f$-subspace assignment for $G_\phi$ over $\Fset$.
Then, for each variable $x_j$ with value $\true$, choose for the $\inver$ vertex of its $\exists$-graph a vector, promised by Item~\ref{itm:3} of Lemma~\ref{lemma:gadget_graph_properties}, which is compatible with any choice of vectors for the $\outver$ vertices labelled $x_j$.
If, however, $x_j$ has value $\false$, choose instead a vector for $\inver$ which is compatible with any choice of vectors for the $\outver$ vertices labelled $\overline{x_j}$.
By Item~\ref{itm:2} of the lemma, such a choice can be extended to all the vertices in the opposite branch.	
Now, since every clause has at most two literals which evaluate to $\false$ under the given satisfying assignment, we find that, so far, vectors have been chosen for at most two of the neighbors of each clause vertex. Since each clause vertex has a subspace of dimension $3$, we can make a choice for it which is compatible with all of its neighbors whose vectors have already been chosen. Observe that this choice can be extended to all the $\outver$ vertices for which no vectors have been chosen so far, because their subspaces have dimension $2$ whereas a vector has been chosen only for one of their neighbors. Finally, by our choice of the vectors of the $\inver$ vertices, using Item~\ref{itm:3} of Lemma~\ref{lemma:gadget_graph_properties}, one can properly choose vectors for the rest of the graph.
This implies that $G_\phi$ is $f$-subspace choosable over $\Fset$.

For the other direction, suppose that $G_\phi$ is $f$-subspace choosable over $\Fset$.
Put $t = n+7$, and apply Item~\ref{itm:4} of Lemma~\ref{lemma:gadget_graph_properties} to obtain an $f_H$-subspace assignment in $\Fset^t$ for each $\exists$-gadget, such that, for each $j \in [n]$, every valid choice of vectors assigns vectors proportional to $e_j$ either to all vertices labelled $x_j$ or to all vertices labelled $\overline{x_j}$.
Finally, to the vertex of a clause $C_i$ that involves the three variables $x_{j_1},x_{j_2},x_{j_3}$, assign the subspace spanned by $e_{j_1},e_{j_2},e_{j_3}$.
Since $G_\phi$ is $f$-subspace choosable over $\Fset$, there exists a valid choice for $G_\phi$ from these subspaces.
By our definition of the subspace assignment, for every $j \in [n]$, this choice assigns vectors proportional to $e_j$ to all vertices labelled $x_j$ or to all vertices labelled $\overline{x_j}$.
In the former case assign $x_j$ to $\false$, and in the latter to $\true$.
We claim that this assignment satisfies $\phi$.
To see this, observe that each vertex representing a clause $C_i$ must have for some $j \in [n]$ a neighbor labelled $x_j$ or $\overline{x_j}$ whose chosen vector is not proportional to $e_j$. This neighbor corresponds to a literal whose value is $\true$ according to our assignment, as desired.
\end{proof}

We also need the following simple lemma, whose proof employs ideas from~\cite{GutnerT09}.

\begin{lemma}\label{lemma:f_hardness_to_k_hardness}
For every field $\Fset$ and for every integer $k\ge 3$, the following holds. There exists a polynomial-time reduction from the problem of deciding for a given input of a bipartite graph $G=(V,E)$ and a function $f:V\to \{2,3\}$ whether $G$ is $f$-subspace choosable over $\Fset$, to the problem of deciding whether a given bipartite graph is $k$-subspace choosable over $\Fset$.
\end{lemma}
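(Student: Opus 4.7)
The plan is to give a polynomial-time reduction $(G,f) \mapsto G'$, outputting a bipartite graph $G'$ such that $G$ is $f$-vector choosable over $\Fset$ if and only if $G'$ is $k$-vector choosable over $\Fset$. The rough idea is to ``pad'' every vertex $v \in V(G)$ with $f(v) = s < k$ by attaching to it a bipartite \emph{padding gadget} $D_v$ that absorbs the extra $k - s$ dimensions of $v$'s $k$-subspace. Concretely, $D_v$ will consist of $k - s$ auxiliary neighbors $w^v_1, \ldots, w^v_{k-s}$ of $v$ on the opposite side of the bipartition, where each $w^v_i$ is in turn attached to a rigid subgraph whose construction is modeled on the $\exists$-graph of Definition~\ref{def:exist_gadget} (but adapted so that every vertex has $k$-dimensional subspaces). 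Bipartiteness is preserved throughout by alternating sides when adding vertices.

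For the forward direction, assuming $G$ is $f$-vector choosable, I would take an arbitrary $k$-subspace assignment on $G'$, select an arbitrary $f(v)$-dimensional subspace inside each $W_v$, and invoke $f$-choosability of $G$ to obtain vectors $x_v \in W_v$ respecting the constraints in $G$. The choice would then be extended to each $D_v$ using flexibility analogous to items~\ref{itm:2} and~\ref{itm:3} of Lemma~\ref{lemma:gadget_graph_properties}: each auxiliary vertex has a $k$-dimensional subspace, leaving ample room to pick a nonzero vector orthogonal to the already-chosen neighboring vectors.

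For the backward direction, starting from a bad $f$-subspace assignment $\{W_v'\}_{v \in V(G)}$ in some $\Fset^t$, I would enlarge the ambient space with fresh coordinates and, for each $v$ with $f(v) = s$, introduce $k - s$ new directions $z^v_1, \ldots, z^v_{k-s}$ that are orthogonal to $W_v'$ and not self-orthogonal, and set $W_v = W_v' \oplus \linspan(z^v_1, \ldots, z^v_{k-s})$. The subspaces inside $D_v$ would be configured so that any valid choice forces $x_{w^v_i}$ to be proportional to $z^v_i$; this in turn forces $x_v$ to be orthogonal to every $z^v_i$, and hence $x_v \in W_v \cap \bigcap_i (z^v_i)^\perp = W_v'$. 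A hypothetical valid choice on $G'$ would then restrict to a valid choice of the bad $f$-assignment on $G$, contradicting its badness.

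The main obstacle is the construction of the rigid subgraph attached to each $w^v_i$, which must, in the backward direction, force the vector at $w^v_i$ to lie in a chosen $1$-dimensional subspace under an all-dimension-$k$ assignment. A natural candidate is a higher-dimensional analog of the construction in Claim~\ref{claim:prop_4}: start from the bad $2$-subspace assignment on a short cycle given in the proof of Proposition~\ref{prop:chrachterization} (and Remark~\ref{remark:char_R}), and pad each subspace up to dimension $k$ using ambient directions that are carefully coupled across the gadget so as not to create ``shortcut'' valid choices via the extra dimensions. Verifying simultaneously that the padded gadget is flexible enough for the forward direction and rigid enough for the backward direction is expected to be the technically delicate step.
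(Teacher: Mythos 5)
Your approach is genuinely different from the paper's, and it has a substantial gap at exactly the point you flag. You propose per-vertex padding with custom ``rigid yet flexible'' gadgets that, under a tailored $k$-subspace assignment, force each auxiliary vertex to a prescribed one-dimensional direction, yet under arbitrary $k$-subspace assignments are freely extendable. You leave this gadget unconstructed, and the difficulty is more serious than your closing sentence suggests: to force a one-dimensional choice at a vertex all of whose subspaces have dimension $k$, you need every vertex of the gadget to have degree at least $k$ (otherwise the gadget is $(k-1)$-degenerate, hence $k$-vector choosable, and no rigidity is possible). Simply padding the $C_4$ obstruction of Claim~\ref{claim:prop_4} to dimension $k$ fails because a cycle with $k$-dimensional subspaces, $k\ge 3$, is always choosable; ``coupling the ambient directions'' would require a different, denser graph, and nothing in the paper's existing lemmas gives you the simultaneous rigidity/flexibility property for such a graph with all-dimension-$k$ subspaces. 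A secondary issue is that your rigid gadgets are built from obstructions specific to $\R$ and finite fields (Proposition~\ref{prop:chrachterization} and Remark~\ref{remark:char_R}), so your reduction would not establish the lemma as stated for every field.

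The paper avoids all of this with a much simpler, field-agnostic ``multiplexer'' construction. For $k=3$, $G'$ consists of nine copies $G_{i,j}$ ($i,j\in[3]$) of $G$ plus two control vertices $v_1,v_2$, where $v_\ell$ is joined to every $f$-value-$2$ vertex in the copies of $V_\ell$. Forward: any choice for $v_1,v_2$ costs each $f(u)=2$ vertex at most one dimension, leaving each copy with an assignment of dimensions at least $f$, so $f$-choosability of $G$ yields a valid choice. Backward: starting from a bad $f$-assignment in $\Fset^t$, pad every subspace with three leading zeros, add $e_i$ (resp. $e_j$) to the dimension-$2$ subspaces of $V_1$ (resp. $V_2$) vertices in copy $G_{i,j}$, and assign $\linspan(e_1,e_2,e_3)$ to $v_1$ and $v_2$; whichever coordinates $i,j$ the choices for $v_1,v_2$ have nonzero, copy $G_{i,j}$ is restricted back to the original bad $f$-assignment and cannot be completed. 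For $k>3$ the dimension is raised one step at a time with $k^2$ copies and two control vertices. This gets the entire reduction from explicit standard-basis bookkeeping, with no gadget engineering and no dependence on the field.
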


\begin{proof}
We start by proving the statement of the lemma for $k=3$.
Given a bipartite graph $G=(V,E)$ with bipartition $V = V_1 \cup V_2$ and given a function $f: V \rightarrow \{2,3\}$, consider the graph $G'$ that consists of nine copies of $G$, labelled $G_{i,j}$ for $i,j \in [3]$, and two additional vertices $v_1, v_2$ such that, for each $\ell \in \{1,2\}$, the vertex $v_\ell$ is adjacent to all vertices $u$ with $f(u)=2$ in the copies of $V_\ell$. It is easy to see that $G'$ is bipartite and that it can be constructed in polynomial running time.

For correctness, suppose first that $G$ is $f$-subspace choosable over $\Fset$, and consider an arbitrary assignment of $3$-subspaces over $\Fset$ to the vertices of $G'$.
Any choice of nonzero vectors for $v_1$ and $v_2$ will reduce the dimensions of the subspaces of the vertices of the graphs $G_{i,j}$ to not less than their original values under $f$.
Since each $G_{i,j}$ is $f$-subspace choosable over $\Fset$, it follows that there exists a valid choice of vectors for the vertices of $G'$, as required.
For the other direction, suppose that for some integer $t$, there exists an $f$-subspace assignment for $G$ such that no choice of nonzero vectors from the subspaces is valid. To the vertices of each subgraph $G_{i,j}$ in $G'$ we assign the subspaces of $\Fset^{t+3}$ obtained by adding three zeros to the head of all vectors of those subspaces.
To the subspaces of dimension $2$ in $G_{i,j}$, we add the vector $e_i$ for the vertices adjacent to $v_1$ and the vector $e_j$ for the vertices adjacent to $v_2$.
To each of the vertices $v_1$ and $v_2$ we assign the subspace of $\Fset^{t+3}$ spanned by $e_1,e_2,e_3$.
Now, for any choice of nonzero vectors for $v_1,v_2$, the subspaces of at least one of the graphs $G_{i,j}$ will be restricted to their initial $f$-subspace assignment, and will thus admit no valid choice of vectors for its vertices.

It remains to consider the case of $k>3$.
It suffices to show a polynomial-time reduction from the problem of deciding whether a given bipartite graph is $(k-1)$-subspace choosable over $\Fset$ to that of deciding whether a given bipartite graph is $k$-subspace choosable over $\Fset$. Here, given a bipartite graph $G=(V,E)$ with bipartition $V = V_1 \cup V_2$, consider the bipartite graph that consists of $k^2$ copies of $G$ and two additional vertices $v_1,v_2$ such that, for each $\ell \in \{1,2\}$, the vertex $v_\ell$ is adjacent to all the vertices in the copies of $V_\ell$. The correctness proof is similar to the one given above, so we omit the details.
\end{proof}

By combining Theorem~\ref{theorem:f_choosable_np_hard} with Lemma~\ref{lemma:f_hardness_to_k_hardness}, the proof of Theorem~\ref{thm:hardness} is completed.

\section*{Acknowledgements}
We thank the anonymous referees for their very helpful suggestions.
%\newpage
\bibliographystyle{abbrv}
\bibliography{choosability}

\end{document}